\title[Quantum $SL_2$, Infinite curvature and Pitman's 2M-X theorem]
      {Quantum $SL_2$, Infinite curvature and Pitman's 2M-X theorem}
\author{Fran\c cois \textsc{Chapon}}
\address{Institut de math\'ematiques de Toulouse, UMR5219, Universit\'e de Toulouse,  118 route de Narbonne, F-31062 Toulouse Cedex 9, France}
\email{francois.chapon@math.univ-toulouse.fr}
\author{Reda   \textsc{Chhaibi}}
\address{Institut de math\'ematiques de Toulouse, UMR5219, Universit\'e de Toulouse,  118 route de Narbonne, F-31062 Toulouse Cedex 9, France}
\email{reda.chhaibi@math.univ-toulouse.fr}
\date{\today}
\DeclareMathOperator{\End}{End}
\DeclareMathOperator{\hw}{hw}
\DeclareMathOperator{\tr}{tr}
\DeclareMathOperator{\Tr}{Tr}
\DeclareMathOperator{\wt}{wt}
\DeclareMathOperator{\eqlaw}{\stackrel{\Lc}{=}}
\DeclareMathOperator{\Id}{Id}
\DeclareMathOperator{\modulo}{mod}
\DeclareMathOperator{\sgn}{sgn}
\DeclareMathOperator{\Span}{Span}
\DeclareMathOperator{\id}{id}
\DeclareMathOperator{\Argcosh}{Argcosh}
\def\half{\frac{1}{2}}
\def\1{{\mathbf 1}}
\def\N{{\mathbb N}}
\def\H{{\mathbb H}}
\def\Z{{\mathbb Z}}
\def\R{{\mathbb R}}
\def\C{{\mathbb C}}
\def\T{{\mathbb T}}
\def\P{{\mathbb P}}
\def\E{{\mathbb E}}
\def\Ac{{\mathcal A}}
\def\Bc{{\mathcal B}}
\def\Cc{{\mathcal C}}
\def\Fc{{\mathcal F}}
\def\Gc{{\mathcal G}}
\def\Lc{{\mathcal L}}
\def\Nc{{\mathcal N}}
\def\Oc{{\mathcal O}}
\def\Pc{{\mathcal P}}
\def\Rc{{\mathcal R}}
\def\Uc{{\mathcal U}}
\def\afrak{{\mathfrak a}}
\def\bfrak{{\mathfrak b}}
\def\gfrak{{\mathfrak g}}
\def\hfrak{{\mathfrak h}}
\def\nfrak{{\mathfrak n}}
\def\slfrak{{\mathfrak sl}}
\def\glfrak{{\mathfrak gl}}
\def\sufrak{{\mathfrak{su} }}
\def\kbar{{r}}
\newtheorem{thm}{Theorem}[section]
\newtheorem{proposition}[thm]{Proposition}
\newtheorem{question}[thm]{Question}
\newtheorem{definition}[thm]{Definition}
\newtheorem{example}[thm]{Example}
\newtheorem{lemma}[thm]{Lemma}
\newtheorem{rmk}[thm]{Remark}
\numberwithin{equation}{section}
\numberwithin{figure}{section}
\newcommand{\ba}{\begin{array}}
\newcommand{\ea}{\end{array}}
\newcommand{\be}{\begin{equation}}
\newcommand{\ee}{\end{equation}}
\newcommand{\bea}{\begin{eqnarray}}
\newcommand{\eea}{\end{eqnarray}}
\newcommand{\beaa}{\begin{eqnarray*}}
\newcommand{\eeaa}{\end{eqnarray*}}
\newcommand{\ignore}[1]{}
\newcommand{\vertiii}[1]{{\left\vert\kern-0.25ex\left\vert\kern-0.25ex\left\vert #1 
    \right\vert\kern-0.25ex\right\vert\kern-0.25ex\right\vert}}
\begin{document}

\begin{abstract}
The classical theorem by Pitman states that a Brownian motion minus twice its running infimum enjoys the Markov property.

On the one hand, Biane understood that Pitman's theorem is intimately related to the representation theory of the quantum group $\Uc_q\left( \slfrak_2 \right)$, in the so-called crystal regime $q \rightarrow 0$. On the other hand, Bougerol and Jeulin showed the appearance of exactly the same Pitman transform in the infinite curvature limit $r \rightarrow \infty$ of a Brownian motion on the hyperbolic space $\H^3 = SL_2(\C)/SU_2$. This paper aims at understanding this phenomenon by giving a unifying point of view.

In order to do so, we exhibit a presentation $\Uc_q^\hbar\left( \slfrak_2 \right)$ of the Jimbo-Drinfeld quantum group which isolates the role of curvature $r$ and that of the Planck constant $\hbar$. The simple relationship between parameters is $q=e^{-r}$. The semi-classical limits $\hbar \rightarrow 0$ are the Poisson-Lie groups dual to $SL_2(\C)$ with varying curvatures $r \in \R_+$. We also construct classical and quantum random walks, drawing a full picture which includes Biane's quantum walks and the construction of Bougerol-Jeulin. Taking the curvature parameter $r$ to infinity leads indeed to the crystal regime at the level of representation theory ($\hbar>0$) and to the Bougerol-Jeulin construction in the classical world ($\hbar=0$).

All these results are neatly in accordance with the philosophy of Kirillov's orbit method.
\end{abstract}
\keywords{Orbit method, Jimbo-Drinfeld's quantum groups, Non-commutative (=quantum) probability, Quantum random walks, Brownian motion on $\H^3 = SL_2(\C)/SU_2$, Infinite curvature.}
\renewcommand{\subjclassname}{%
  \textup{2010} Mathematics Subject Classification}
\subjclass[2010]{Primary 46L53; Secondary 58B32, 60B99}

\maketitle
\begin{flushright}
{\it To our teachers, \\ Philippe Biane and Philippe Bougerol. }
\end{flushright}

\medskip
\medskip
\medskip
\hrule
\setcounter{tocdepth}{2}
\tableofcontents
\hrule

\section*{Notation}
$\Lc(X)$ is the probability measure which is the law of a random variable $X$. Equality in law between two random variables $X$ and $Y$ is written $X \eqlaw Y$. If $X = \left( X_t \right)_{t \in \T}$ is a process indexed by $\T$, then its natural filtration is denoted $\Fc^X$. We will only consider $\T = \N$ for discrete time, and $\T = \R_+$ for continuous time.

The Vinogradov symbol $\ll$ is equivalent to the $\Oc$ notation: $f \ll g \Leftrightarrow f = \Oc(g)$. Moreover, if the implicit constant depends on other quantities, they will be indicated by subscripts.

If $V$ is a finite dimensional vector space, then $\Tr: \End(V) \rightarrow \C$ is the usual trace, while $\tr := \frac{\Tr}{\dim V}$ is the normalized trace. $V^*$ is the dual vector space, and the duality pairing is denoted by $\langle \cdot, \cdot \rangle$. Moreover, if $G$ is a group, then its Lie algebra i.e. the tangent space at the identity is written $T_e G = \gfrak$.

Throughout the paper, $\hbar>0$ will denote a positive real number, which will play the role of Planck constant. For any $\Lambda \in \R$, we write $\Lambda^\hbar := \hbar \lfloor \Lambda/\hbar \rfloor \in \hbar \Z$.

\section{Statement of the problem}

In order to state the problem at the end of this section, we start by presenting the related body of work while distilling the necessary geometric and representation-theoretic notions as we progress. To that endeavor, we adopt the informal style of a survey, which will not be faithful nor all-encompassing. Also, certain prior results will be slightly reformulated, in order to reflect a personal point of view and lay the groundwork for this paper.

While we are focused on relating the representation theory of quantum groups and (possibly non-commutative) geometry, our starting point is Pitman's theorem from probability theory \cite{P75}. It will play the role of Ariadne's thread while navigating through the maze created from the interaction of these various fields.

\begin{thm}[Pitman's 2M-X Theorem, Discrete version]
\label{thm:PitmanDiscrete}
Let $\left( X_n ; n \in \N \right)$ be a simple random walk in $\Z$, i.e. increments are independent and
$$ \forall n \in \Z_+, \ 
   \P\left( X_{n+1} - X_n = 1 \right)
 = 1 - \P\left( X_{n+1} - X_n = -1 \right)
 = \half \ .
$$
Then the process $\left( \Lambda_n^{\infty} ; n \in \N \right)$ defined as
$$ \Lambda_n^{\infty} := X_n - 2 \inf_{0 \leq k \leq n} X_k \ $$
is a Markov chain on $\N$ with transition kernel given by $Q$:
\begin{align}
\label{eq:LR_transitions}
Q\left( \lambda, \lambda + 1 \right) = \frac{\lambda+2}{2(\lambda+1)} \ ,
& \quad 
Q\left( \lambda, \lambda - 1 \right) = \frac{\lambda}{2(\lambda+1)} \ .
\end{align}
Moreover, the missing information is stationary and equidistributed in law in the sense that for all $n\in \N$:
$$ \Lc\left( X_n \ | \ \Fc^{\Lambda^\infty}_n, \ \Lambda_n^\infty = \lambda \right) \ 
   = \ 
   \frac{1}{\lambda+1} \sum_{\substack{ -\lambda \leq k \leq \lambda \\ \lambda-k \text{ even }}} \delta_k
   \ . $$
\end{thm}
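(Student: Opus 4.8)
The plan is to prove both assertions \emph{simultaneously} by induction on $n$, since — as I explain at the end — they cannot be separated. Write $J_n := \inf_{0 \leq k \leq n} X_k$, so that (with the implicit convention $X_0 = 0$, whence $J_n \leq 0$) we have $\Lambda_n^\infty = X_n - 2 J_n \geq 0$. I would record the elementary dichotomy: the walk touches its running infimum, i.e. $X_n = J_n$, if and only if $X_n = - \Lambda_n^\infty$, which is exactly the smallest value of the right parity compatible with the value of $\Lambda_n^\infty$; otherwise $X_n > J_n$. Correspondingly, writing $\varepsilon_{n+1} := X_{n+1} - X_n \in \{\pm 1\}$, the pair $(\Lambda_{n+1}^\infty, X_{n+1})$ is a \emph{deterministic} function of $(X_n, \Lambda_n^\infty, \varepsilon_{n+1})$: if $X_n = J_n$ then $\Lambda_{n+1}^\infty = \Lambda_n^\infty + 1$ regardless of $\varepsilon_{n+1}$ (the infimum either stays put or drops by one, in each case compensating the step), while if $X_n > J_n$ then $J_{n+1} = J_n$ and $\Lambda_{n+1}^\infty = \Lambda_n^\infty + \varepsilon_{n+1}$.

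As induction hypothesis at time $n$ I take the ``missing information'' identity from the statement, i.e. that conditionally on $\Fc^{\Lambda^\infty}_n$ with $\Lambda_n^\infty = \lambda$ the variable $X_n$ is uniform on $\{-\lambda, -\lambda + 2, \dots, \lambda\}$; this is trivial at $n = 0$. For the step, I would first observe that $\Fc^{\Lambda^\infty}_n \vee \sigma(X_n) \subseteq \Fc^X_n$ because each $\Lambda_k^\infty$ is a function of $X_0, \dots, X_k$; since $\varepsilon_{n+1}$ is independent of $\Fc^X_n$, a one-line computation then shows that, conditionally on $\Fc^{\Lambda^\infty}_n$, the pair $(X_n, \varepsilon_{n+1})$ has $X_n$ distributed as in the induction hypothesis and $\varepsilon_{n+1}$ an independent uniform sign.

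It then remains to push this law through the deterministic map above and to collect masses. Conditioning on $\Fc^{\Lambda^\infty}_n$ with $\Lambda_n^\infty = \lambda$: the atom $X_n = -\lambda$ (mass $\frac{1}{\lambda+1}$) produces $\Lambda_{n+1}^\infty = \lambda + 1$ with $X_{n+1} \in \{-(\lambda+1), -(\lambda+1)+2\}$ equiprobably; each atom $X_n = -\lambda + 2j$, $1 \leq j \leq \lambda$, produces with probability $\half$ each either $(\Lambda_{n+1}^\infty, X_{n+1}) = (\lambda+1,\, -(\lambda+1)+2(j+1))$ or $(\lambda-1,\, -(\lambda-1)+2(j-1))$. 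Summing the contributions gives $\P(\Lambda_{n+1}^\infty = \lambda+1 \mid \Fc^{\Lambda^\infty}_n) = \frac{1}{\lambda+1} + \frac{\lambda}{2(\lambda+1)} = \frac{\lambda+2}{2(\lambda+1)}$ and $\P(\Lambda_{n+1}^\infty = \lambda - 1 \mid \Fc^{\Lambda^\infty}_n) = \frac{\lambda}{2(\lambda+1)}$; these depend only on $\lambda$ and coincide with \eqref{eq:LR_transitions}, which is the Markov property with kernel $Q$. Moreover, on $\{\Lambda_{n+1}^\infty = \lambda+1\}$ the listed contributions hit each point of $\{-(\lambda+1), -(\lambda+1)+2, \dots, \lambda+1\}$ with the common mass $\frac{1}{2(\lambda+1)}$, and on $\{\Lambda_{n+1}^\infty = \lambda-1\}$ each point of $\{-(\lambda-1), \dots, \lambda-1\}$ with the common mass $\frac{1}{2(\lambda+1)}$; after normalization these are exactly the uniform laws predicted at time $n+1$, closing the induction.

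The one conceptual point — and the reason the two statements must be proved together — is that $\Lambda^\infty$ alone is \emph{not} Markov: knowing $\Lambda_n^\infty$ does not tell whether the walk currently sits at its running infimum, which is what decides whether $\Lambda^\infty$ can decrease. Beyond this entanglement, the only work is the routine (if slightly fiddly) mass bookkeeping above, including the boundary cases $\lambda = 0$ and the extreme atoms $X_n = \pm \lambda$. An equivalent formulation packages the same computation as a Rogers--Pitman intertwining $K P = Q K$, with $P$ the simple-random-walk kernel on $\Z$ and $K(\lambda, \cdot) := \frac{1}{\lambda+1} \sum_{-\lambda \leq k \leq \lambda,\ \lambda - k \text{ even}} \delta_k$, together with the observation that the infimum-based process $\Lambda^\infty$ is the one filtered by $K$; but verifying the latter is again precisely the argument above.
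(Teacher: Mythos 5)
Your proof is correct. The paper does not supply its own argument for Theorem~\ref{thm:PitmanDiscrete}; it attributes the result to Pitman's original 1975 paper, which proceeds by combinatorics of lattice paths, and separately mentions the Rogers--Pitman intertwining method among the known alternatives. Your induction is, as you yourself observe at the end, precisely an unpacking of that intertwining $KP = QK$: proving simultaneously that $\Lambda^\infty$ is $Q$-Markov and that $\Lc(X_n \mid \Fc^{\Lambda^\infty}_n) = K(\Lambda_n^\infty, \cdot)$ is exactly the Rogers--Pitman criterion, and the deterministic dichotomy you isolate (whether $X_n$ sits at the running infimum, equivalently $X_n = -\Lambda_n^\infty$) is what makes the mass bookkeeping close. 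This route is genuinely different from Pitman's original combinatorial count, and it is the one closest in spirit to the rest of the paper: the quantum Markov chain $(X,\Lambda)$ of Theorem~\ref{thm:Biane} and the Littlewood--Richardson dynamics of subsection~\ref{subsection:qLR} are noncommutative versions of exactly this intertwining structure, with the link $K$ replaced by the branching multiplicities of tensor products into irreducibles. Your handling of the small points is also right: the reduction $\Fc^{\Lambda^\infty}_n \vee \sigma(X_n) \subseteq \Fc^X_n$ giving conditional independence of $\varepsilon_{n+1}$, the fact that both values of $\varepsilon_{n+1}$ send $\Lambda^\infty$ up by one from the atom $X_n = -\lambda$, and the boundary case $\lambda = 0$ where $Q(0,1) = 1$.
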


Pitman's original proof uses the combinatorics of random walks and is formulated in terms of the running maximum instead of the running infimum. Both are equivalent upon replacing $X$ by $-X$, hence the common name of ``Pitman's $2M-X$ Theorem", where the capital letter $M$ stands for ``Maximum". 

From the discrete version, one obtains a Brownian version thanks to a simple application of Donsker's invariance principle and by computing the diffusive rescaling of the Markov kernel $Q$.

\begin{thm}[Pitman's 2M-X Theorem, Continuous version]
\label{thm:PitmanContinuous}
Let $\left( X_t \ ; \ t \in \R_+ \right)$ be a standard Brownian motion. Then the process $\left( \Lambda_t^{\infty} \ ; \ t \in \R_+ \right)$ defined as
$$ \Lambda_t^{\infty} := X_t - 2 \inf_{0 \leq s \leq t} X_s \ $$
is a Bessel 3 process, that is to say it has the same distribution as
$$ \Lambda_t^{0} := \sqrt{X_t^2 + Y_t^2 + Z_t^2} \ , $$
where $\left( X, Y, Z \right)$ is a Euclidean Brownian motion on $\R^3$.

Moreover, the missing information is stationary and equidistributed in law in the sense that:
$$ \Lc\left( X_t \ | \ \Fc^{\Lambda^\infty}_t, \ \Lambda_t^\infty = \lambda \right) \ 
   = \ 
   \frac{1}{2\lambda} \mathds{1}_{[-\lambda, \lambda]}(x) dx \ . $$
\end{thm}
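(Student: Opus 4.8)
The plan is to derive the continuous statement (Theorem~\ref{thm:PitmanContinuous}) from the discrete one (Theorem~\ref{thm:PitmanDiscrete}) by a scaling limit, exactly as announced in the excerpt. Concretely, fix $N \in \N$ and let $\left( X_n^{(N)} ; n \in \N \right)$ be a simple random walk on $\Z$. Form the rescaled process $\widetilde{X}^{(N)}_t := \frac{1}{\sqrt{N}} X^{(N)}_{\lfloor Nt \rfloor}$, together with its piecewise-linear interpolation. By Donsker's invariance principle, $\widetilde{X}^{(N)} \Rightarrow X$ in $C(\R_+, \R)$ as $N \to \infty$, where $X$ is a standard Brownian motion. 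Since the map $w \mapsto \left( t \mapsto w(t) - 2\inf_{0 \le s \le t} w(s) \right)$ is continuous from $C(\R_+, \R)$ to itself (the running-infimum functional is continuous for the uniform-on-compacts topology), the continuous mapping theorem gives
\begin{equation*}
   \frac{1}{\sqrt{N}} \Lambda^{\infty, (N)}_{\lfloor Nt \rfloor}
   \ = \ \widetilde{X}^{(N)}_t - 2 \inf_{0 \le s \le t} \widetilde{X}^{(N)}_s
   \ \Longrightarrow \
   X_t - 2 \inf_{0 \le s \le t} X_s \ = \ \Lambda^\infty_t \ .
\end{equation*}
By Theorem~\ref{thm:PitmanDiscrete}, the left-hand side is (a rescaling of) a Markov chain on $\N$ with kernel $Q$.

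The second step is to identify the limiting diffusion. I would compute the diffusive rescaling of the generator associated with $Q$: writing $\lambda = \sqrt{N} x$ and expanding
\begin{equation*}
   N \left[ Q(\lambda, \lambda+1) f\!\left( \tfrac{\lambda+1}{\sqrt{N}} \right)
          + Q(\lambda, \lambda-1) f\!\left( \tfrac{\lambda-1}{\sqrt{N}} \right)
          - f\!\left( \tfrac{\lambda}{\sqrt{N}} \right) \right]
\end{equation*}
to second order in $N^{-1/2}$, using $Q(\lambda, \lambda\pm 1) = \tfrac12 \pm \tfrac{1}{2(\lambda+1)} \mp \tfrac{1}{2(\lambda+1)}$ rearranged as $\tfrac12 \left( 1 \pm \tfrac{1}{\lambda+1} \cdot \tfrac{?}{} \right)$ — more precisely $Q(\lambda,\lambda+1)-Q(\lambda,\lambda-1) = \tfrac{1}{\lambda+1}$. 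The drift term is then $N \cdot \tfrac12 \cdot \tfrac{1}{\sqrt N}\left(Q(\lambda,\lambda+1)-Q(\lambda,\lambda-1)\right) \cdot 2 f'(x) \to \tfrac{1}{x} f'(x)$ and the diffusion term is $\tfrac12 f''(x)$, yielding the generator $\tfrac12 f''(x) + \tfrac1x f'(x)$, which is precisely that of the Bessel~$3$ process. Combined with the (standard) fact that the Bessel~$3$ martingale problem is well-posed, weak convergence of the Markov chains plus convergence of generators identifies $\Lambda^\infty$ as a Bessel~$3$ process; its equality in law with the radial part $\sqrt{X_t^2+Y_t^2+Z_t^2}$ of a three-dimensional Brownian motion is the classical description of $\mathrm{BES}(3)$.

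For the conditional law statement, I would similarly pass to the limit in the discrete equidistribution identity of Theorem~\ref{thm:PitmanDiscrete}. Under the rescaling $k = \sqrt{N} x$, $\lambda = \sqrt{N} \ell$, the uniform probability measure $\frac{1}{\lambda+1}\sum_{-\lambda \le k \le \lambda,\ \lambda - k \text{ even}} \delta_k$ on the arithmetic progression of span $2$ inside $[-\lambda,\lambda]$ converges weakly to the uniform law $\frac{1}{2\ell}\mathds{1}_{[-\ell,\ell]}(x)\,dx$. One must be a little careful that conditioning passes to the limit: the clean way is to show joint convergence of the pair $\big(\widetilde X^{(N)}_{\lfloor Nt\rfloor}/\sqrt N,\ (\widetilde X^{(N)}_{\lfloor N s\rfloor}/\sqrt N)_{s\le t}\big)$ together with the running infimum, so that the conditional laws converge as well; alternatively, invoke the known intertwining / Rogers--Pitman criterion in continuous time. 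The main obstacle is precisely this last point — making the passage to the limit of the \emph{conditional} laws (and the Markov property of $\Lambda^\infty$) rigorous rather than merely formal — since weak convergence does not in general commute with conditioning; one resolves it by upgrading Donsker to joint convergence of $(X, \inf X, \Lambda^\infty)$ and using that the relevant conditioning events are continuity sets, or by appealing directly to the continuous-time Rogers--Pitman "Markov functions" theorem with the explicit intertwining kernel that is the diffusive limit of the discrete one. The generator computation and the weak-convergence bookkeeping are otherwise routine.
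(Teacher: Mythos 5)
Your route is exactly the one the paper announces: Donsker's invariance principle applied to the discrete Theorem~\ref{thm:PitmanDiscrete} together with a diffusive rescaling of the kernel $Q$, whose generator limit $\tfrac12 f'' + \tfrac1x f'$ you correctly identify as $\mathrm{BES}(3)$, and a scaling limit of the discrete conditional law to obtain the uniform density on $[-\lambda,\lambda]$. The paper treats this as a known result and gives no further detail, so there is nothing to compare beyond noting that you have correctly flagged the one genuine subtlety (that conditional laws do not automatically pass to the weak limit) and proposed standard fixes for it; the one-line rearrangement of $Q(\lambda,\lambda\pm1)$ in your write-up is garbled but immediately superseded by the correct identity $Q(\lambda,\lambda+1)-Q(\lambda,\lambda-1)=\tfrac1{\lambda+1}$, so it does not affect the argument.
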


\begin{rmk}
The reader trained in probability theory knows that the Markov property is very fragile and can be easily broken, while $\left( -\inf_{0 \leq s \leq t} X_s \ ; \ t \in \R_+ \right)$ is the archetype of non-Markovian behavior. As such, Pitman's theorem is rather peculiar. It is also very rigid, as
$$ \left( X_t - k \inf_{0 \leq s \leq t} X_s \ ; \ t \in \R_+\right)$$
enjoys the Markov property only for $k=0$, $1$, and $2$ \cite{MO04} ; the latter case being by far the most interesting in our opinion.

\end{rmk}

In fact, direct proofs of Theorem \ref{thm:PitmanContinuous} at the level of continuous-time stochastic processes are available. Jeulin \cite{RY13} has an approach that uses filtration enlargement techniques and \cite{RP81} makes use of intertwinings of Markov kernels. These two proofs led to a flurry of very interesting probabilistic developments. For example, see the essay \cite{N06} for filtration enlargement, and \cite{DM09} for intertwining.

If other proofs and generalizations abound, we want to focus on two specific approaches where the complex group $SL_2(\C)$ plays an important role. The approach by Bougerol and Jeulin \cite{BJ02} is based on a geometric construction while the approach by Biane \cite{B_suq, B_crystals} is based on the representation theory of that group. The goal of this paper is to exhibit a direct relationship between the two, via semi-classical limits. Let us mention the recent paper \cite{AAS19} which is similar in spirit to ours as it analyzes the radial part of Brownian motion on $\H^3$ while stressing the role of Poisson geometry.

The Lie algebra of $SU_2$ is
\begin{align}
\label{def:su2}
\sufrak_2 & := T_e SU_2 = \Span_\R\left( X_\gfrak, Y_\gfrak, Z_\gfrak \right)
\end{align}
where $\left( X_\gfrak, Y_\gfrak, Z_\gfrak \right)$ is the basis of anti-Hermitian matrices:

       $$ X_\gfrak =  \begin{pmatrix} i & 0 \\  0 & -i \end{pmatrix} \ ;
	  Y_\gfrak =  \begin{pmatrix} 0 & i \\  i &  0 \end{pmatrix} \ ;
	  Z_\gfrak =  \begin{pmatrix} 0 & 1 \\ -1 &  0 \end{pmatrix} \ ,
       $$
       which are $i$ times the so-called Pauli matrices.
Our use of the subscript $\cdot_\gfrak$ is due to the fact that the symbols $X$, $Y$ and $Z$ will often be used to refer to other objects. The complexification of $\sufrak_2$ is the Lie algebra of $SL_2(\C)$:
\begin{align}
\label{def:sl2}
\slfrak_2 & := T_e SL_2(\C) = \sufrak_2 \otimes \C = \Span_\C\left( E, F, H \right) \ ,
\end{align}
where:
\begin{align}
\label{def:HEF}
&
   H = \begin{pmatrix} 1 & 0 \\  0 & -1 \end{pmatrix} \ ;
   E = \begin{pmatrix} 0 & 1 \\  0 &  0 \end{pmatrix} \ ;
   F = \begin{pmatrix} 0 & 0 \\  1 &  0 \end{pmatrix} \ .
\end{align}

\subsection{Bougerol and Jeulin's approach via curvature deformation} In the paper \cite{BJ02}, Bougerol and Jeulin take a parameter $r>0$ and consider a left-invariant process $g^r = \left( g_t^r \ ; \ t \geq 0 \right)$ on the symmetric space $\H^3 = SL_2(\C) / SU_2$. Because of the Gram-Schmidt decomposition, we make the identification $\H^3 = SL_2(\C) / SU_2 \approx NA$, where $NA$ is the subgroup of lower triangular matrices with positive diagonals. More precisely:
$$ A := \left\{ 
   \begin{pmatrix}
   a & 0 \\
   0 & a^{-1}
   \end{pmatrix}
   \ | \ a \in \R_+^*
   \right\} \ ,
\quad 
\text{and}
\quad 
   N := \left\{ 
   \begin{pmatrix}
   1 & 0 \\
   b & 1
   \end{pmatrix}
   \ | \ b \in \C
   \right\} \ .
$$
The  corresponding Lie algebras are denoted by $\afrak := T_e A = \R H$ and $\nfrak := T_e N = \R F \oplus \R iF$.

In that identification, the process $g^r$ satisfies the left-invariant stochastic differential equation (SDE for short)
       $$ \forall t \geq 0, \ 
           g_t^r = \begin{pmatrix}
                   \half r dX_t    & 0 \\
                  r( dY_t + i Z_t) & -\half r dX_t
                   \end{pmatrix}
	  \circ g_t^r \ ,$$
where $(X, Y, Z)$ is a standard Euclidean Brownian motion on $\R^3$. Here, the symbol $\circ$ refers to the Stratonovich integration convention. Solving explicitly the SDE yields for all $t \geq 0$:
\begin{align}
\label{eq:BJ_process}
 g_t^r 
 = \begin{pmatrix}
	    e^{\half r X_t} & 0 \\
	    r e^{\half r X_t} \int_0^t e^{-r X_s} d(Y_s+iZ_s) & e^{-\half r X_t}
	\end{pmatrix} \ .
\end{align}

The reader unfamiliar with stochastic integration should see the above equation as a definition for the process $g^r$. More importantly, the parameter $r>0$ should be seen as a curvature parameter. The definitive explanation will be given in section \ref{section:commutativeDiagram} where we will see that we are considering the hyperbolic space $\H^3$ as the space with constant sectional curvature $-\half r^2$. As such, there is no harm in loosely referring to $r$ as curvature. At this stage, let us only mention the following. We have, as $r \rightarrow 0$:
$$  g_t^r 
 = \Id +
   r
   \begin{pmatrix}
	    \half X_t  & 0 \\
	    Y_t + iZ_t & -\half X_t
	\end{pmatrix}
   + o(r)
 =: \Id + r x_t^0 + o(r) \ ,
$$
and thus appears a three dimensional Brownian motion $\left( x_t^0 = \frac{\partial g_t^r}{\partial r} \rvert_{r=0} \ ; \ t \geq 0 \right)$ on $\afrak \oplus \nfrak \approx \R^3$, which is a flat space. Because of Brownian motion's time-scaling properties, rescaling $r$ amounts to speeding up the Brownian motion and hence the associated vector fields. As the process $g^r$ moves more erratically as $r>0$ grows larger, the non-commutativity of the underlying space $NA$ becomes more apparent. One could say that the space increases in curvature, which is a key element in the following result by Bougerol and Jeulin.

Their result holds for all complex semi-simple groups $G$, but in the context of $G=SL_2(\C)$, we have:
\begin{thm}[Bougerol-Jeulin, \cite{BJ02}]
\label{thm:BJ}
Let $\half r \Lambda_t^r$ be the radial part of $g_t^r$, i.e. $\exp\left( r \Lambda_t^r \right)$ is the largest singular value of $g_t^r$ or equivalently that $\Lambda^r \geq 0$ and there exists $\left( k_1, k_2 \right)\colon \R_+ \rightarrow SU_2 \times SU_2$ such that
$$
   g_t^r 
   =
   k_1(t)
   \begin{pmatrix} e^{\half r \Lambda_t^r} & 0 \\ 0 & e^{-\half r \Lambda_t^r} \end{pmatrix}
   k_2(t) \ .
$$
Then, $\Lambda^r$ is a process whose distribution does not depend on $r > 0$. It is explicitly given by:
\begin{align}
\label{eq:BJdynamic}
   \Lambda_t^r = & \frac{1}{r}
   \Argcosh\left[ \half r^2 \left|e^{\half r X_t} \int_0^t e^{-r X_s} (dY_s + i dZ_s) \right|^{2} + \cosh(r X_t) \right] \ ,
\end{align}
where $\Argcosh(x) = \log\left( x + \sqrt{x^2-1} \right)$ is the inverse of $\cosh: \R_+ \rightarrow [1, \infty)$. Moreover, for all $t>0$, we have the limits in probability:
\begin{align}
\label{eq:BJlimits}
\left\{
\begin{array}{ccccc}
\Lambda_t^{r=0}      & := & \P - \lim_{r \rightarrow 0     } \Lambda_t^r & = & \sqrt{X_t^2 + Y_t^2 + Z_t^2} \ ,\\
\Lambda_t^{r=\infty} & := & \P - \lim_{r \rightarrow \infty} \Lambda_t^r & = & X_t - 2 \inf_{0 \leq s \leq t} X_s \ .
\end{array}
\right.
\end{align}
In particular, these processes are both Bessel processes of dimension $3$.
\end{thm}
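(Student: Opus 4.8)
The plan is to establish the theorem in three steps: the explicit formula \eqref{eq:BJdynamic}, the $r$-independence of the law (the core), and the two limits. The formula is immediate from \eqref{eq:BJ_process}: the singular values of $g_t^r$ being $e^{\pm r\Lambda_t^r/2}$ by definition, $e^{r\Lambda_t^r}$ and $e^{-r\Lambda_t^r}$ are the eigenvalues of $(g_t^r)^*g_t^r$, a matrix of determinant $1$, so $e^{r\Lambda_t^r}+e^{-r\Lambda_t^r}=\Tr\big((g_t^r)^*g_t^r\big)=\|g_t^r\|_{\mathrm{HS}}^2$. Reading off the entries of \eqref{eq:BJ_process} gives $\|g_t^r\|_{\mathrm{HS}}^2=e^{rX_t}+e^{-rX_t}+r^2 e^{rX_t}\big|\int_0^t e^{-rX_s}(dY_s+idZ_s)\big|^2$, and dividing by $2$ is exactly \eqref{eq:BJdynamic}.

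For the second step I would prove the sharper statement that $\Lambda^r$ is a Bessel $3$ process for every finite $r>0$. Set $N_t=\int_0^t e^{-rX_s}(dY_s+idZ_s)$ and $C_t:=\cosh(r\Lambda_t^r)=\cosh(rX_t)+\tfrac{r^2}{2}e^{rX_t}|N_t|^2$; since the bracket of the martingale part of $|N_t|^2$ equals $4|N_t|^2e^{-2rX_t}\,dt$, the pair $(X_t,|N_t|^2)$ — hence its smooth invertible image $(X_t,\Lambda_t^r)$ — is a two–dimensional diffusion. Itô's formula applied to $C_t$ produces, after the cancellations, the drift $\big(\tfrac{r^2}{2}C_t+r^2e^{-rX_t}\big)dt$ and the remarkable $X_t$-free identity $d\langle C\rangle_t=r^2(C_t^2-1)\,dt$; passing to $\Lambda_t^r=\tfrac1r\Argcosh(C_t)$ yields
\begin{align}
\label{eq:Lambda_r_SDE}
d\Lambda_t^r = d\beta_t + \frac{r\,e^{-rX_t}}{\sinh(r\Lambda_t^r)}\,dt \ ,
\end{align}
with $\beta$ a standard Brownian motion and $d\langle\beta,X\rangle_t=\frac{\cosh(r\Lambda_t^r)-e^{-rX_t}}{\sinh(r\Lambda_t^r)}\,dt$; in particular $\langle\Lambda^r\rangle_t=t$, and the generator of $(X_t,\Lambda_t^r)$ on $\{|x|\le\lambda\}$ is
$$ \Gc_r=\tfrac12\partial_x^2+\tfrac12\partial_\lambda^2+\frac{\cosh(r\lambda)-e^{-rx}}{\sinh(r\lambda)}\,\partial_x\partial_\lambda+\frac{r\,e^{-rx}}{\sinh(r\lambda)}\,\partial_\lambda \ . $$

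Next I would check that the $r$-independent Markov kernel $\Phi_r(\lambda,dx):=\frac1{2\lambda}\mathds{1}_{[-\lambda,\lambda]}(x)\,dx$ intertwines $\Gc_r$ with the Bessel $3$ generator $\Lc:=\tfrac12\partial_\lambda^2+\tfrac1\lambda\partial_\lambda$, that is $\Phi_r(\Gc_r f)=\Lc(\Phi_r f)$ for $f$ in a core, $\Phi_r$ acting by averaging out $x$ (e.g.\ for $f=\lambda$ this says $\frac{r}{2\lambda\sinh(r\lambda)}\int_{-\lambda}^{\lambda}e^{-rx}\,dx=\frac1\lambda$). Since $(X_0,\Lambda_0^r)=(0,0)$ and $\Phi_r(0,\cdot)=\delta_0$, the criterion of \cite{RP81} for Markov functions then gives at one stroke that $\Lambda^r$ is a Bessel $3$ process — so its law does not depend on $r$ — and that $\Lc\big(X_t\mid\Fc^{\Lambda^r}_t\big)$ is uniform on $[-\Lambda_t^r,\Lambda_t^r]$. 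The intertwining identity is the one genuinely laborious point: every ingredient is explicit, but one must carefully track the moving–boundary terms coming from $\partial_\lambda$ and $\partial_\lambda^2$ applied to $\lambda\mapsto\frac1{2\lambda}\int_{-\lambda}^{\lambda}(\cdot)\,dx$ and see that they cancel against the degeneracy of $\Gc_r$ at $|x|=\lambda$ (where the mixed coefficient equals $\pm1$).

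The limits \eqref{eq:BJlimits} finally follow from \eqref{eq:BJdynamic} by elementary asymptotics, which I would run directly rather than invoking the process-level result. As $r\to0$, $N_t\to Y_t+iZ_t$ in $L^2$ and $C_t=1+\tfrac{r^2}{2}(X_t^2+Y_t^2+Z_t^2)+o(r^2)$, so $\Lambda_t^{r=0}=\sqrt{X_t^2+Y_t^2+Z_t^2}$. As $r\to\infty$, conditionally on $X$ the integral $N_t$ is complex Gaussian of variance $2\int_0^t e^{-2rX_s}\,ds$, so Laplace's method gives $\tfrac1r\log|N_t|^2\to-2\inf_{[0,t]}X_s$ in probability; combined with $\tfrac1r\log\cosh(r\Lambda_t^r)\le\Lambda_t^r\le\tfrac1r\log\cosh(r\Lambda_t^r)+\tfrac{\log2}{r}$ and $\tfrac1r\log\cosh(r\Lambda_t^r)=\tfrac1r\log\big[\cosh(rX_t)+\tfrac{r^2}{2}e^{rX_t}|N_t|^2\big]\to\max\big(|X_t|,\,X_t-2\inf_{[0,t]}X_s\big)=X_t-2\inf_{[0,t]}X_s$, this yields $\Lambda_t^{r=\infty}=X_t-2\inf_{[0,t]}X_s$. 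Both limits are Bessel $3$: the first by definition of the Euclidean norm of an $\R^3$-Brownian motion, the second by Pitman's Theorem \ref{thm:PitmanContinuous} (equivalently, it is the $r=\infty$ case of step two).
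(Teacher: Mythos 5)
Your treatment of \eqref{eq:BJdynamic} and of the two limits \eqref{eq:BJlimits} is essentially identical to the paper's proof in Section~\ref{subsection:BJ} (same trace-of-$g g^\dagger$ identity, same $\hbar$-free Taylor/Laplace/tropicalization asymptotics, same observation that $X_t - 2\inf_{[0,t]}X > |X_t|$ a.s. collapses the $\max$). Where you genuinely diverge is on the $r$-independence of $\Lc(\Lambda^r)$, which the paper explicitly defers to Section~\ref{subsection:rIndependence} and establishes there by two arguments: (i) it is the semi-classical limit of the Littlewood--Richardson quantum Markov chain, whose transition kernel is $q$-independent by rigidity of the branching rule; and (ii) a reinterpretation of Bougerol--Jeulin's change of measure via the $r$-deformed spherical function $\phi_0^{(r)}$ and the Laplace--Beltrami operator on $\left(SU_2^*\right)_r$ in polar coordinates. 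Your route is different and more elementary: a direct It\^o computation giving the joint diffusion $(X_t,\Lambda_t^r)$ on the wedge $\{|x|\le\lambda\}$, followed by a Rogers--Pitman intertwining with the Bessel~$3$ generator via the kernel $\Phi_r(\lambda,dx)=\tfrac1{2\lambda}\mathds 1_{[-\lambda,\lambda]}(x)\,dx$. I checked your It\^o computations: the drift of $C_t=\cosh(r\Lambda_t^r)$ is indeed $\bigl(\tfrac{r^2}{2}C_t+r^2e^{-rX_t}\bigr)\,dt$, $d\langle C\rangle_t=r^2(C_t^2-1)\,dt$, $d\langle\beta,X\rangle_t$ is as you state, and the degeneracy of the cross-coefficient to $\pm1$ on $|x|=\lambda$ is what confines the diffusion to the wedge. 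The intertwining does hold on the obvious test polynomials ($\lambda$, $x^2$, $x\lambda$, $xg(\lambda)$). What you gain over the paper's two proofs is a self-contained stochastic-calculus argument that bypasses both quantum groups and spherical harmonic analysis, and as a bonus recovers the ``missing information'' statement (conditional law of $X_t$ given $\Fc^{\Lambda^r}_t$ is uniform on $[-\Lambda_t^r,\Lambda_t^r]$); what you lose is the conceptual link to quantum-group rigidity, which is precisely the point of the paper's approach. One caveat: you flag the intertwining identity $\Phi_r\Gc_r=\Lc\Phi_r$ as ``the genuinely laborious point'' and do not actually carry it out; to make the proof complete you would need to verify it on a core, paying attention to the moving-boundary terms produced by $\partial_\lambda$ and $\partial_\lambda^2$ hitting the limits of integration in $\Phi_r$ and to how they are absorbed by the $\pm1$ degeneracy of the $\partial_x\partial_\lambda$ coefficient at $x=\pm\lambda$.
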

Because Bougerol and Jeulin treat the general case, for a general complex semi-simple Lie group $G$, extracting the above statement is not a trivial task. As part of our unifying picture, we shall provide a complete proof in the case of $SL_2(\C)$, where the key arguments are simplified while giving a few illuminating computations. The only novelty in our treatment of Theorem \ref{thm:BJ} is in the proof that the law $\Lc(\Lambda^r)$ does not depend on $r>0$. This fact is rather subtle and so is the argument of Bougerol and Jeulin. We give a short argument based on the rigidity of quantum groups and the results developed in this paper. We also reinterpret the argument of Bougerol and Jeulin through the lens of spherical harmonic analysis, thereby showing where the curvature and the rigidity of quantum groups are hidden.

The important remark is that the Pitman transform shows up in infinite (negative) curvature, while the norm process in $\R^3$ appears in flat curvature. The interpretation of the parameter $r$ as curvature is mainly absent from the literature except in the very astute remark in the final paragraphs of \cite[Section 1]{BJ02}.

\subsection{Kirillov's orbit method}
\label{subsection:orbitMethod}

The correct general framework to understand our group-theoretic story is Kirillov's orbit method. As explained in \cite{Kir99}, the orbit method is more of a philosophy, with merits and demerits. For a Lie group $G$ with Lie algebra $\gfrak = T_e G$, it is well-known that the study of representation theory for $G$ is equivalent to its local version, that is the study of the representations of the Lie algebra $\gfrak$. Equivalently, one prefers to work with the universal enveloping algebra which is defined as
$$ \Uc^\hbar\left( \gfrak \right)
 = T\left( \gfrak \right)
   /
   \left\{ x \otimes y - y \otimes x - \hbar[x, y] \right\} \ ,$$
where $T(\gfrak)$ is the tensor algebra. The fundamental idea behind the orbit method is that the representation theory $G$ should be seen as the quantization of a certain Poisson manifold. Already, one sees that the algebra $\Uc^\hbar\left( \gfrak \right)$ degenerates as $\hbar \rightarrow 0$ to the algebra $S\left( \gfrak \right)$ of symmetric tensors, which is canonically identified with $\C\left[ \gfrak^* \right]$, the algebra of polynomials on $\gfrak^*$. There are two interesting structures on $\C\left[ \gfrak^* \right]$, whose combination is referred to as the trivial Poisson-Lie structure on $\gfrak^*$. Basically, we are only saying that $\gfrak^*$ has to be seen as a flat Poisson manifold, once endowed with the canonical Kirillov-Kostant-Souriau (KKS) Poisson bracket. The formal definition is as follows, which will fix the notations for later use.
 
On the one hand, let $\Cc^\infty(\gfrak^*)$ be the algebra of smooth functions on $\gfrak^*$ and we have the inclusion of sub-algebras $\C\left[ \gfrak^* \right] \hookrightarrow \Cc^\infty(\gfrak^*)$. As a semi-classical limit, $\Cc^\infty(\gfrak^*)$ becomes a Poisson algebra once endowed with the KKS bracket $\{ \cdot, \cdot \}_0: \Cc^\infty(\gfrak^*) \times \Cc^\infty(\gfrak^*) \rightarrow \Cc^\infty(\gfrak^*)$. By definition, a Poisson bracket is a derivation in both variables. Therefore, because of the Leibniz rule, the Poisson bracket is entirely determined by its values on linear functions:
$$ \forall X \in \gfrak \approx (\gfrak^*)^*, \ f_X(\cdot) := \langle X, \cdot \rangle \ .$$
On linear forms, the KKS bracket is defined as:
\begin{align}
\label{def:kks}
\{ f_X, f_Y \}_{0} & := f_{[X,Y]} = \langle [X,Y], \cdot \rangle \ .
\end{align}

On the other hand, recall that if $\left( \Gc, *_\Gc \right)$ is a group, then the group law $*_\Gc$ can be encoded thanks to a coproduct on algebras of functions. By definition, the coproduct $\Delta$ associated to $\left( \Gc, *_\Gc \right)$ is the map:
\begin{align}
\label{def:coproduct}
\begin{array}{cccc}
 \Delta: & \Cc^\infty\left( \Gc \right) & \rightarrow & \Cc^\infty\left( \Gc \times \Gc \right) \\
         & f & \mapsto &
         \left( (g_1, g_2) \mapsto f( g_1 *_\Gc g_2 ) \right) 
\end{array}
\ .
\end{align}
Since $\Delta$ is a morphism of algebras, it needs to be specified on generators only. If $\Ac \subset \Cc^\infty(\Gc)$ is a dense sub-algebra such that for all $f \in \Ac$, $\Delta(f)$ is a separable function, which is written in Sweedler's notation:
$$ \Delta(f)(g_1, g_2) = \sum_{(f)} f_1(g_1) f_2(g_2) \ ,$$
then we can actually write $\Delta: \Ac \rightarrow \Ac \otimes \Ac$. This is the customary choice in order to work algebraically. Here, consider $\left( \gfrak^*, + \right)$ to be {\it an Abelian group}, which amounts to the trivial coproduct $\Delta_0$ defined on linear functions $X \in \gfrak \approx (\gfrak^*)^*$ via:
\begin{align}
\label{def:trivialCoproduct}
\begin{array}{cccc}
 \Delta_0: & \C[ \gfrak^* ] & \rightarrow & \C[ \gfrak^* ] \otimes \C[ \gfrak^* ] \\
           & X & \mapsto &
             X \otimes 1 + 1 \otimes X 
\end{array}
\ .
\end{align}

We give the following trivial, yet key, example. This is the setting of Euclidean Brownian motion on $\R^3 \approx \afrak \oplus \nfrak$.
\begin{example}
\label{example:coProductR3}
Consider the Abelian group $\left( \R^3, + \right) = \left( \afrak \oplus \nfrak, + \right)$. Its coordinate algebra is the polynomial algebra in three variables $\C[X, Y, Z]$, and $(X,Y,Z)$ are linear forms on $\R^3$.

We have for $f \in \{X, Y, Z\}$:
$$ \Delta_0(f)\left(
   \begin{pmatrix} x_1 \\ y_1 \\ z_1 \end{pmatrix}, \ 
   \begin{pmatrix} x_2 \\ y_2 \\ z_2 \end{pmatrix} \right)
   \overset{\text{Eq. } \eqref{def:coproduct}}{=}
   f\left(
   \begin{pmatrix} x_1 + x_2 \\ y_1 + y_2 \\ z_1 + z_2 \end{pmatrix}
   \right)
   =
   f\left(
   \begin{pmatrix} x_1 \\ y_1 \\ z_1 \end{pmatrix}
   \right)
   +
   f\left(
   \begin{pmatrix} x_2 \\ y_2 \\ z_2 \end{pmatrix}
   \right) \ ,
$$
which is more compactly written $\Delta_0(f) = f \otimes 1 + 1 \otimes f$, with the convention that tensors with index $i=1,2$ are functions of the $i$-th variable. Since $\Delta_0$ is a morphism of algebras, it is entirely determined by its values on generators $X,Y,Z$.
\end{example}

The simplest illustration of the orbit method is the Heisenberg Lie algebra, which is the Lie algebra with two generators $x$ and $p$, along with the commutation relation $[x, p] = \hbar \id$. The orbit method morally says that ``the quantum mechanics of one particle on the real line is the representation theory of the Heisenberg algebra". 

In the end, implementing the orbit method consists in drawing correspondences between the two worlds: Unitary representations should correspond to orbits, characters should correspond to orbital integrals, tensor products should correspond to convolutions of orbital measures, etc\dots\ For an extensive dictionary, we refer again to \cite{Kir99}. In that sense, some of our results will be implementations of the orbit method and the general philosophy will be our inspiration.

Nevertheless, for the purposes of this paper, we will only consider $SL_2(\C)$ and groups related to it.


\subsection{Biane's quantum walks}
\label{subsection:bianeQW}

In accordance with the orbit method, Biane \cite{B91} considers $\Uc(\slfrak_2)=\Uc^{\hbar=1}(\slfrak_2)$ as an algebra of observables for a non-commutative probability space. This space is the quantization of $\sufrak_2^* \approx \R^3$ endowed with the KKS structure, and as such, one should think of a three dimensional space where we cannot measure the directions $(X, Y, Z)$ independently. The measurement operators $(X_\gfrak, Y_\gfrak, Z_\gfrak)$ do not commute, with exactly the relations given by the Lie bracket of $\sufrak_2$. Although it is never stated explicitly, studying quantum systems in this space is equivalent to the representation theory of $\sufrak_2$.

In a sense, Biane's work gives a dynamical flavor to Kirillov's orbit method. More precisely, Biane considers the following quantum dynamical system or quantum random walk using the framework of non-commutative probability. For a comprehensive survey, we recommend \cite{B08}. For the purposes of this paper, we want to push for the idea that one needs to separate the geometry of the underlying space and measurement operators. Biane's construction is about quantum mechanics on the flat geometric space $\sufrak_2^* \approx \R^3$, or equivalently non-commutative probability on the Abelian Lie group $\sufrak_2^* \approx \R^3$. The matrix mechanics for quantum measurements are controlled by the commutation relations of the Lie algebra $\sufrak_2$.

\medskip

\paragraph{\bf Notions of non-commutative probability:} If classical probability theory uses the $\C$-algebra of random variables $L^{\infty-}( \Omega ) := \cap_{p \geq 1} L^p( \Omega )$ endowed with the linear form $\E$, non-commutative probability relies on a possibly non-commutative involutive unital $\C$-algebra $\Ac$ endowed with a state $\tau\colon \Ac \rightarrow \C$. In order to distinguish with the more fundamental duality between a vector space $V$ and its dual $V^*$, we write $\dagger$ for the $\C$-anti-linear involution on $\Ac$. A state is a normalized positive linear form i.e. $\tau(a a^\dagger) \geq 0$ for all $a \in \Ac$. It plays the role of expectation. The elements of $\Ac$ are naturally called non-commutative random variables. If $\left( a_i \right)_{i \in I}$ is a family of non-commutative random variables, then their joint distribution is defined as the collection of non-commutative moments:
\begin{align}
\label{def:daggerMoments}
  \left( \tau\left( a_{i_1}^{\varepsilon_1} a_{i_2}^{\varepsilon_2} \dots a_{i_k}^{\varepsilon_k} \right) \ ; \
   k \in \N \ \text{and} \ 
   \forall j=1, \dots, k, \
   (i_j,\varepsilon_j) \in I \times \{1, \dagger\}
   \right) \ .
\end{align}
Convergence in distribution is defined as the convergence in non-commutative moments.   

Now, as a guiding example, let us construct explicitly the probability space underlying a random walk in $\R^d$ - with say, $d=3$ as in Example \ref{example:coProductR3} - and independent identically distributed increments sharing a common distribution. The independent increments are defined on the classical probability space $\Omega = \left( \R^d \right)^{\N}$ endowed with an infinite product measure $\P$. In order to have a dual point of view, we need to write everything in terms of functions, which are referred to as observables. Since an infinite product space is in fact a projective limit of spaces, the dual notion will be an inductive limit of functions. Given the natural inclusion $\Fc\left( \R^d \right)^{\otimes n} \hookrightarrow \Fc\left( \R^d \right)^{\otimes (n+1)}$, one realizes that a convenient algebra of functions is
$$ \Fc( \Omega ) = \varinjlim_{n} \Fc\left( \R^d \right)^{\otimes n} \ .$$
This is the inductive limit of polynomial functions depending on a finite number of increments. It is nothing but the polynomial algebra in infinitely variables and, if increments are bounded, we have the natural inclusion $\Fc( \Omega ) \hookrightarrow L^\infty(\Omega, \P)$. 

In the non-commutative setting, the analogue of the algebra of observables depending on a single increment is $\Uc(\slfrak_2)$. From the previous discussion, it is natural to consider:
$$ \Ac := \varinjlim_{n} \Uc(\slfrak_2)^{\otimes n} $$
as the algebra of all observables. The $\dagger$ involution makes the elements in $\sufrak_2$ self-adjoint. The natural inclusion $\Uc(\slfrak_2)^{\otimes n} \hookrightarrow \Uc(\slfrak_2)^{\otimes (n+1)}$ is $x \mapsto x \otimes 1$. In particular, in $\Ac$, we identify $x_1 \otimes \dots \otimes x_k$ with $x_1 \otimes \dots \otimes x_k \otimes 1^\infty$. As a state $\tau$, we take a product state on pure tensors i.e. for all $x_1 \otimes \dots \otimes x_k \in \Uc(\slfrak_2)^{\otimes k}$:
$$ \tau(x_1 \otimes \dots \otimes x_k ) = \prod_{i=1}^k \tau(x_i) \ ,$$
and for every single elementary observable $x \in \Uc(\slfrak_2)$, we have:
$$ \tau(x) = \tr \rho_1(x) \ ,$$
where $\rho_1: \Uc(\slfrak_2) \rightarrow \End(\C^2)$ is the natural representation via \eqref{def:HEF}. Recall that $\rho_1$ is the representation with highest weight $\lambda = 1$, and could easily be replaced by another representation.

Also, for the sake of simpler exposition, we do not detail the matters of completion throughout the paper. Indeed, at this point, $\Ac$ is the non-commutative analogue of a polynomial algebra. In order to have functional calculus available and carry certain analytic arguments, $\Ac$ needs to be completed into a Von Neumann algebra. We refer to Appendix \ref{appendix:topologicalConsiderations} for tying up such loose ends.

\medskip

\paragraph{\bf Random walks:} Now, the crucial point is that $\Uc(\slfrak_2)$ is endowed with a coproduct $\Delta_0: \Uc(\slfrak_2) \rightarrow \Uc(\slfrak_2) \otimes \Uc(\slfrak_2)$ that is exactly the same on $\slfrak_2$ as the trivial coproduct \eqref{def:trivialCoproduct}. This allows to construct a random walk whose algebra of observables is not commutative or quantum random walk for short. We stress that the underlying space has to be seen as {\it an Abelian group} because of the choice of the coproduct, but it is the algebra of observables that is not commutative. In this construction, we consider measurement operators, which measure for every time $n \in \N$, the observable $x \in \Uc(\slfrak_2)$ applied to the quantum random walk. As such, one exhibits a morphism of algebras $M_n: \Uc(\slfrak_2) \rightarrow \Ac$ as follows:
\begin{align}
\label{eq:def_measurementBiane}
   &
   \left\{
   \begin{array}{ccl}
   M_1 &  = & 1 \ , \\
   M_n &  =  & \left( M_{n-1} \otimes 1 \right) \circ \Delta_0, \ \text{ for $n\geq2$} \ .
   \end{array}
   \right.
\end{align}

We extend the definition from $n \in \N$ to $t \in \R_+$ by defining:
\begin{align}
\label{eq:timeConvention}
\forall t \in \R_+, \ M_t := & M_{\lfloor t \rfloor} \ .
\end{align}

Because $M_n$ is a morphism of algebras, the operators $\left( M_n(x) \ ; \ x \in \Uc(\slfrak_2) \right)$ have the same commutation relations as the enveloping algebras, and hence are truly quantum observables, on the non-commutative space $\Uc(\slfrak_2)$ which is the quantization of $\sufrak_2^* \approx \R^3$. The discrepancy between $\slfrak_2 = \sufrak_2 \otimes \C$ and $\sufrak_2$ is due to an implicit complexification, which will be explained upon discussing real forms.

\begin{thm}[Biane \cite{B_suq}]
\label{thm:Biane}
Consider $C_\gfrak := \sqrt{\half + X_\gfrak^2 + Y_\gfrak^2 + Z_\gfrak^2}$ to be the Casimir operator associated to $\Uc(\slfrak_2)$. Then define for $n \in \N$:
\begin{align}
\label{eq:quantumFlatProcesses}
   &
   \left\{
   \begin{array}{ccc}
   \left( X_n, Y_n, Z_n \right)
   & := 
   & \left( M_n(X_\gfrak), M_n(Y_\gfrak), M_n(Z_\gfrak) \right) \ ,\\
   \Lambda_n & := & M_n(C_\gfrak) = \sqrt{\half + X_n^2 + Y_n^2 + Z_n^2} \ .
   \end{array}   
   \right.
\end{align}
The triple $\left( (X_n, Y_n, Z_n) \ ; \ n \in \N \right)$ is a non-commutative process, with each coordinate being a simple random walk. Furthermore, the pair $\left( X, \Lambda \right)$ is a quantum Markov chain on $\Uc(\slfrak_2)$, with a classical transition operator on the state space:
$$ \left\{ (\omega, \lambda) \in \Z \times \N \ | \ 
           \omega \in \{-\lambda, \lambda+2, \dots, \lambda-2, \lambda\}
   \right\} $$
and with transitions:
\begin{align}
\label{eq:XL_transitions}
 p\left( (\omega,\lambda), (\omega+1,\lambda+1) \right) = \frac{\lambda+\omega+2}{2(\lambda+1)},
 & \quad
 p\left( (\omega,\lambda), (\omega-1,\lambda+1) \right) = \frac{\lambda-\omega+2}{2(\lambda+1)}, 
 \\
 p\left( (\omega,\lambda), (\omega+1,\lambda-1) \right) = \frac{\lambda-\omega}{2(\lambda+1)}, 
 & \quad
 p\left( (\omega,\lambda), (\omega-1,\lambda-1) \right) = \frac{\lambda+\omega}{2(\lambda+1)} \ .
 \nonumber
\end{align}
In particular, $X$ is a simple random walk, while $\Lambda$ follows the same transitions as \eqref{eq:LR_transitions}.
\end{thm}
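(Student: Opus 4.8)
The plan is to translate everything into the finite-dimensional representation theory of $\slfrak_2$ on tensor powers of $\C^2$, exploiting the two structural features of the construction \eqref{eq:def_measurementBiane}: each $M_n$ is a morphism of algebras obtained by iterating the cocommutative coproduct $\Delta_0$, and $\tau$ is a product state. Composing with $\rho_1^{\otimes n}$ produces the representation $\rho^{(n)} := \rho_1^{\otimes n}\circ\Delta_0^{(n-1)}$ of $\Uc(\slfrak_2)$ on $W_n := (\C^2)^{\otimes n}$, and since a tensor product of normalized traces is the normalized trace of the tensor product, one has $\tau\big(M_n(x)\big) = \tr_{W_n}\rho^{(n)}(x)$ for all $x\in\Uc(\slfrak_2)$. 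Thus the joint law of the time-$n$ observables $\big(M_n(x)\,;\,x\in\Uc(\slfrak_2)\big)$ is entirely read off from $\rho^{(n)}$ together with the isotypic decomposition $W_n\cong\bigoplus_\lambda V_\lambda\otimes\Mc_{n,\lambda}$, where $V_\lambda$ is the irreducible of highest weight $\lambda$ and $m_{n,\lambda}:=\dim\Mc_{n,\lambda}$.

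I would first clear the soft assertions. As $X_\gfrak,Y_\gfrak,Z_\gfrak$ are primitive for $\Delta_0$, the observable $X_n=M_n(X_\gfrak)$ is a sum of $n$ mutually commuting single-factor copies of $\rho_1(X_\gfrak)$, each two-valued with the symmetric $\pm1$ law under $\tr$; hence each of $X_n,Y_n,Z_n$ is a simple random walk on $\Z$, while the triple is non-commutative because $M_n$ is an algebra morphism, so $[X_n,Y_n]=M_n\big([X_\gfrak,Y_\gfrak]\big)\neq 0$. Next, $C_\gfrak$ is central, so by Schur's lemma $\rho^{(n)}(C_\gfrak)$ acts on each $V_\lambda\otimes\Mc_{n,\lambda}$ by a scalar depending only on $\lambda$ — an affine function of $\lambda$; with the normalization in $C_\gfrak$ this scalar is $\lambda$ itself, so $\Lambda_n=M_n(C_\gfrak)$ reads off the isotypic index — and $\Lambda_n$ commutes with $X_n$, again because $M_n$ is a morphism and $C_\gfrak$ is central. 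Consequently $(X_n,\Lambda_n)$ is an honest commuting pair: $\Lambda_n=\lambda$ on $V_\lambda\otimes\Mc_{n,\lambda}$, and the weight-$\omega$ eigenspace of $X_n$ there has dimension $m_{n,\lambda}$, so $\P(X_n=\omega,\Lambda_n=\lambda)=m_{n,\lambda}/2^n$ on the stated state space.

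For the dynamics, the branching rule $V_1\otimes V_\lambda\cong V_{\lambda+1}\oplus V_{\lambda-1}$ (with $V_{-1}:=0$) applied to $W_{n+1}=W_n\otimes\C^2$ gives $m_{n+1,\lambda'}=m_{n,\lambda'-1}+m_{n,\lambda'+1}$, so $m_{n,\lambda}$ counts nearest-neighbour paths in $\N$ of length $n$ from $0$ to $\lambda$, and the $V_{\lambda'}$-component of $W_{n+1}$ splits according to the penultimate vertex. One checks that the $\Lambda_k$'s mutually commute (for a central $z$ in a cocommutative Hopf algebra $z\otimes 1$ commutes with $\Delta_0(z)$, and one iterates this identity through $\Delta_0^{(k-1)}$), so $\Lambda=(\Lambda_n)_n$ is literally a classical Markov chain; its kernel follows by comparing dimensions, $\big((\lambda+2)m_{n,\lambda}/2^{n+1}\big)\big/\big((\lambda+1)m_{n,\lambda}/2^{n}\big)=(\lambda+2)/\big(2(\lambda+1)\big)$, which is exactly \eqref{eq:LR_transitions}. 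For the refined kernel on pairs, I would add the $(n{+}1)$-th factor $\C^2$ with its trace state $\tfrac12\,\Id$, i.e.\ each weight vector $u_\pm$ with probability $\tfrac12$: the new weight is $\omega\pm1$, and the new isotypic index is $\lambda+1$ resp.\ $\lambda-1$ with probability the squared norm of the projection of $v_\omega\otimes u_\pm$ onto $V_{\lambda\pm1}$, a squared Clebsch--Gordan coefficient computed directly from the ladder operators $E,F$. This yields, up to the evident normalization, the kernel $p$ of \eqref{eq:XL_transitions}; summing out $\omega$ returns $Q$ and summing out $\lambda$ returns the simple random walk, which is the last sentence of the statement.

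The genuinely delicate point — and the main obstacle — is to upgrade these static computations into the statement that $(X,\Lambda)$ is a \emph{quantum Markov chain} with transition operator $p$, since the full family $\{X_k,\Lambda_k\ ;\ k\in\N\}$ does not generate a commutative algebra. Here I would follow Biane's framework: since $\tau$ restricts to a trace on each $\Uc(\slfrak_2)^{\otimes n}$, there is a unique $\tau$-preserving conditional expectation onto the von Neumann algebra generated by the observables up to time $n$, and the substance is to show that it maps $f(X_{n+1},\Lambda_{n+1})$ to $(pf)(X_n,\Lambda_n)$. This is precisely where the compatibility of $\Delta_0$ with the isotypic decomposition enters: the partial trace $\End(W_{n+1})\to\End(W_n)$ over the last tensor factor intertwines with the canonical identification $\Mc_{n+1,\lambda'}\cong\bigoplus_{|\lambda-\lambda'|=1}\Mc_{n,\lambda}$, and pushing the trace state of the fresh $\C^2$ through this identification produces exactly the weights $p(\cdot,\cdot)$; independence of the fresh factor then yields the Markov property. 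Everything else is finite-dimensional representation theory and bookkeeping; I would write the conditional-expectation step with care and defer the von Neumann completions needed to make ``quantum Markov chain'' rigorous to Appendix \ref{appendix:topologicalConsiderations}.
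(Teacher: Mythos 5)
The paper cites Theorem~\ref{thm:Biane} from \cite{B_suq} without reproducing a proof, so there is no argument in the text to compare against directly. Your proposal nonetheless follows the same route as Biane's original and as the paper's proof of the $q$-deformed analogue in subsection~\ref{subsection:qLR}: pass to $(\C^2)^{\otimes n}$ where $\tau$ becomes the normalized trace, use Schur's lemma to read the Casimir as the isotypic index, extract the kernel from the $\slfrak_2$ branching rule and Clebsch--Gordan coefficients, and obtain the quantum Markov property from the $\tau$-preserving conditional expectation and the partial trace over the fresh $\C^2$ factor --- exactly the delicate step you flag and sketch correctly.
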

If the pair $\left( X, \Lambda \right)$ has coordinate-wise exactly the same dynamic as in Pitman's theorem, the joint dynamic is different: $X$ is a simple random walk while $\Lambda$ is the quantized analogue of a Euclidean norm - not the Pitman transform of $X$! This is even more apparent upon taking the following semi-classical limit.

\begin{thm}[Biane]
\label{thm:Biane_CV}
In the sense of non-commutative moments, we have the convergence in law:
$$ \left( \hbar M_{t/\hbar^2}(X_\gfrak), \hbar M_{t/\hbar^2}(Y_\gfrak), \hbar M_{t/\hbar^2}(Z_\gfrak) \ ; \ t \geq 0\right) 
   \overset{\hbar \rightarrow 0}{\longrightarrow}
   \left( \left( X_t, Y_t, Z_t \right) ; t \geq 0 \right) \ ,
$$
where $\left( X, Y, Z \right)$ is a Euclidean Brownian motion on $\R^3$. Moreover, jointly with the above convergence $\hbar M_{\cdot/\hbar^2}(C_\gfrak)$ converges to the Euclidean norm $\sqrt{X^2+Y^2+Z^2}$ which is a Bessel-3 process.
\end{thm}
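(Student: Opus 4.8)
The plan is to reduce the statement to a classical invariance principle by exploiting the fact that, under the product state $\tau$, the joint non-commutative moments of the measurement operators $M_n(X_\gfrak), M_n(Y_\gfrak), M_n(Z_\gfrak)$ are governed by the coproduct $\Delta_0$ and the representation $\rho_1$. Concretely, because $M_n = (M_{n-1}\otimes 1)\circ\Delta_0$ and $\Delta_0$ restricted to $\slfrak_2$ is primitive, each $M_n(X_\gfrak)$ is literally a sum $\sum_{k=1}^{n} 1^{\otimes(k-1)}\otimes X_\gfrak\otimes 1^{\otimes(n-k)}$ of "independent increments" in the sense that, under the product state, the blocks living in distinct tensor legs are freely... no: they are \emph{tensor-independent}, hence behave like classically independent random variables once we compute moments. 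So the first step is: show that $\bigl(M_n(X_\gfrak),M_n(Y_\gfrak),M_n(Z_\gfrak)\bigr)$ has, coordinatewise, exactly the law of a sum of $n$ i.i.d.\ copies of the triple $(X_\gfrak,Y_\gfrak,Z_\gfrak)$ measured in the state $\tau = \tr\,\rho_1(\cdot)$ — and that the mixed moments also factor over the tensor legs. In the state $\tr\rho_1$, each of $X_\gfrak,Y_\gfrak,Z_\gfrak$ has mean $0$ and, since $\rho_1(X_\gfrak)^2 = \rho_1(Y_\gfrak)^2 = \rho_1(Z_\gfrak)^2 = -\Id$ (the Pauli relations), variance $1$ after the appropriate sign/normalization; one computes the single-site moments of any word in $X_\gfrak,Y_\gfrak,Z_\gfrak$ explicitly via the $2\times 2$ matrices.

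The second step is the scaling argument. Writing $M_{t/\hbar^2}$ means $\lfloor t/\hbar^2\rfloor$ increments, so $\hbar M_{t/\hbar^2}(X_\gfrak)$ is $\hbar$ times a sum of $\approx t/\hbar^2$ centered, unit-variance, tensor-independent increments. Fixing a finite word $x_{i_1}^{\ve_1}\cdots x_{i_k}^{\ve_k}$ in the three generators at times $t_1\le\cdots\le t_m$, one expands $\tau$ of the corresponding product of $\hbar M_{t_j/\hbar^2}(\cdot)$'s as a sum over assignments of each factor to a tensor leg; the leading contribution as $\hbar\to 0$ comes from pair-matchings of legs (each unmatched or triple-matched leg contributes a lower order in $\hbar$ after the $\hbar^{\#\text{factors}}$ prefactor is distributed), and the combinatorics of pair-matchings together with the single-site second moments reproduces exactly the Wick/Isserlis formula for the moments of a Gaussian vector with the covariance of standard $\R^3$-Brownian motion. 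This is the usual proof of Donsker's theorem at the level of moments; the only twist is bookkeeping the non-commutativity, which is harmless because within a single tensor leg the matrices $\rho_1(X_\gfrak),\rho_1(Y_\gfrak),\rho_1(Z_\gfrak)$ are finite matrices whose normalized-trace moments are $O(1)$, so any leg visited by $\ge 3$ factors is suppressed by the $\hbar$ power count. Hence $\bigl(\hbar M_{t/\hbar^2}(X_\gfrak),\hbar M_{t/\hbar^2}(Y_\gfrak),\hbar M_{t/\hbar^2}(Z_\gfrak)\bigr)$ converges in non-commutative moments to $(X_t,Y_t,Z_t)$, a genuine commutative Euclidean Brownian motion (the limit is commutative precisely because commutators $[\rho_1(X_\gfrak),\rho_1(Y_\gfrak)]$ etc.\ live on a single leg and are therefore $O(\hbar^2)$ relative to the main term).

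The third step handles the Casimir. By Theorem \ref{thm:Biane}, $\Lambda_n = M_n(C_\gfrak) = \sqrt{\tfrac12 + X_n^2+Y_n^2+Z_n^2}$ as an operator identity, so $\hbar M_{t/\hbar^2}(C_\gfrak) = \sqrt{\tfrac{\hbar^2}{2} + (\hbar X_{t/\hbar^2})^2 + (\hbar Y_{t/\hbar^2})^2 + (\hbar Z_{t/\hbar^2})^2}$. Since $\hbar^2/2\to 0$ and the triple converges jointly to $(X_t,Y_t,Z_t)$, and since polynomials in these operators converge jointly, one concludes by approximating the square root: on the relevant spectral range one may replace $\sqrt{\cdot}$ by polynomials converging uniformly on compacts (after the appropriate completion of $\Ac$ into a von Neumann algebra, as flagged in the text and Appendix \ref{appendix:topologicalConsiderations}), so $\hbar M_{\cdot/\hbar^2}(C_\gfrak)$ converges in moments, jointly with the Brownian triple, to $\sqrt{X^2+Y^2+Z^2}$; that this is a Bessel-3 process is classical.

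The main obstacle I expect is not the combinatorics of Step 2 but the analytic justification in Step 3: a priori, convergence in non-commutative moments of the polynomial generators does not automatically yield convergence of $\sqrt{\tfrac12 + X_n^2+Y_n^2+Z_n^2}$, because the square root is not a polynomial. One must either invoke the von Neumann algebra completion and a uniform-norm / Stone--Weierstrass argument on a common compact spectral window (using a uniform bound on the operator norms of the rescaled increments, which does \emph{not} hold — the spectra of $X_n$ spread out — so more care is needed: one should instead argue moment-by-moment that $\mathbb{E}\bigl[(\tfrac{\hbar^2}{2}+\|\cdot\|^2)^{p/2}\bigr]$ converges using uniform integrability of the approximating classical walk), or bypass it by directly invoking the classical Pitman/Bougerol--Jeulin picture (Theorem \ref{thm:BJ}, the $r\to 0$ limit) to identify the limit law. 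The cleanest route is probably: prove Step 2 rigorously in moments, deduce that the classical process $(\hbar X_{t/\hbar^2},\hbar Y_{t/\hbar^2},\hbar Z_{t/\hbar^2})$ converges in distribution to Brownian motion by Donsker, and then transfer the convergence of the Casimir through the continuous mapping theorem at the level of genuine (commutative) probability distributions, which is legitimate because $(X_n,Y_n,Z_n,\Lambda_n)$ generate a commutative algebra and hence correspond to an honest classical Markov chain.
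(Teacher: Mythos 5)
The paper does not actually supply a proof of Theorem~\ref{thm:Biane_CV}; it is cited as Biane's result, and the closest the paper comes to an argument is the sketch in subsection~\ref{subsection:qPitman} (around Eq.~\eqref{eq:walks_cv}). That sketch handles your Step~2 much more economically than a Wick/pair-matching expansion: for each \emph{fixed} generator $A\in\{H,\Re\beta,\Im\beta\}$ the family $\left( 1^{\otimes(k-1)}\otimes A ;\ k\geq1 \right)$ is commuting, hence a genuine sequence of i.i.d.\ Bernoulli increments, so the \emph{classical} Donsker invariance principle gives coordinate-wise convergence to Brownian motion, and the joint non-commutativity is controlled by observing that each commutator $[M_n(A),M_n(B)]$ is of order $\hbar$ and so vanishes in the limit. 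Your Steps~1--2 are therefore correct in substance but the moment combinatorics is unnecessary once you notice that each coordinate is already a classical random walk.

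The real defect is the closing sentence of Step~3. The claim that ``$(X_n,Y_n,Z_n,\Lambda_n)$ generate a commutative algebra and hence correspond to an honest classical Markov chain'' is simply false: one has
$$ [M_n(X_\gfrak),M_n(Y_\gfrak)] = M_n\bigl([X_\gfrak,Y_\gfrak]\bigr) = -2\,M_n(Z_\gfrak) \neq 0 \ , $$
and this non-vanishing commutator is the whole point of the construction --- it is precisely the quantity that scales as $O(\hbar)$. What Theorem~\ref{thm:Biane} asserts is weaker: the \emph{pair} $(X,\Lambda)$ is a quantum Markov chain admitting a \emph{classical} transition kernel, while adding $Y$ and $Z$ destroys commutativity at fixed $\hbar>0$. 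You therefore cannot invoke the continuous mapping theorem at the level of a joint classical distribution of the full quadruple. The legitimate options, one of which you correctly flag yourself, are: (a) use that $\Lambda_n$ is classical on its own (because $C_\gfrak$ is central), identify its diffusive scaling limit from the explicit transitions of Eq.~\eqref{eq:LR_transitions}, and control joint moments with the generator walks by polynomial approximation plus uniform integrability; or (b) as the paper does in subsection~\ref{subsection:qPitman}, work with an expression such as $\cosh(r\hbar+r\Lambda_t^{r,\hbar})$, which is a genuine polynomial function of the generator walks, pass to the limit, and invert $\cosh$ at the end. Note also that your alternative ``bypass'' by citing Theorem~\ref{thm:BJ} (the $r\to 0$ limit) cannot work: that theorem lives entirely in the semi-classical ($\hbar=0$) picture and says nothing about the $\hbar\to 0$ transition, which is exactly what Theorem~\ref{thm:Biane_CV} is about.
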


In order to see the relation with the approach of Bougerol and Jeulin, we make the double identification $\sufrak_2^* \approx \R^3 \approx \nfrak \oplus \afrak$ and reformulate the above result as the convergence of quantum observables to classical observables applied to the standard Brownian motion on $\nfrak \oplus \afrak$:
\begin{align*}
   & \left( \hbar M_{t/\hbar^2}(\Fc) \ ; \ \Fc \in \Uc\left( \slfrak_2 \right)
     , \ t \geq 0 \right)\\
   \overset{\hbar \rightarrow 0}{\longrightarrow} &
   \left( 
   f(x_t^0) = f\left(
   \begin{pmatrix}
   \half X_t     & 0 \\
   Y_t + i Z_t   & -\half X_t
   \end{pmatrix} \right) \ ; \ f = \pi(\Fc), \ t \geq 0 \right) \ .
\end{align*}
Here $\pi: \Uc\left( \slfrak_2 \right) \approx \Uc^\hbar\left( \slfrak_2 \right) \rightarrow \C[\sufrak_2^*] \approx \C[\nfrak \oplus \afrak]$ is the quotient map $\modulo \hbar$, which consists in seeing any non-commutative monomial as a commutative one.

\subsection{Quantum groups and crystals}
The mismatch between Pitman's Theorem \ref{thm:PitmanContinuous} and the norm process appearing in the previous Theorem \ref{thm:Biane_CV} is fixed upon considering quantum groups. The classical presentation of a quantum group is \cite[Example 3.2.1]{Majid00}:
\begin{align}
\label{eq:Uq_pres}
   \Uc_q\left( \slfrak_2 \right)
&
:= \langle K^{\half}, K^{-\half}, E, F \rangle / \mathcal{R} \ ,
\end{align}
where $K=q^{H}$, $q=e^{h}$, and $\mathcal{R}$ is the two-sided ideal generated by the relations:
\begin{equation}
\label{eq:Uq_relations}
   K^\half E K^{-\half} = q E, \ 
   K^\half F K^{-\half} = q F, \ 
   EF - FE = \frac{K-K^{-1}}{q-q^{-1}} . \ 
\end{equation}
Here, $h$ should not be seen as the actual Planck constant. It is a deformation parameter such that formally ``$\Uc_q\left( \slfrak_2 \right) \rightarrow \Uc(\slfrak_2)$" as $h \rightarrow 0$. This can be seen from Taylor expanding the relations up to order $1$. For example, upon writing $q = 1 + h + o(h)$, the first relation in \eqref{eq:Uq_relations} becomes:
$$ E + \half h [H,E] + o(h) = E + h E + o(h) \ ,$$
and therefore one recovers the classical commutation relation $[H, E] = 2E$ in $\slfrak_2$. 

As $q \rightarrow 0$, the algebra structure breaks down but a combinatorial structure called  crystals remains at the level of the representation theory. In \cite{B_crystals}, Biane understood that it is the combinatorics of crystals that is lurking behind Pitman's theorem. The generalization which consists in tensoring by other representations and in general Lie type is developed in \cite{LLP} and \cite{LLP2}. In fact, they revisited the works of \cite{BBO} and \cite{BBO2} where continuous crystals were directly constructed. The following statement is extracted from \cite{B_crystals}. As the Main Theorem \ref{thm:main} will demonstrate, it should be seen as a quantized version of Bougerol and Jeulin's Theorem \ref{thm:BJ}:

\begin{thm}
\label{thm:Biane_Pitman}
There is a quantum Markov chain $(X, \Lambda)$ on $\Uc_q(\slfrak_2)$, with a classical transition operator given by:
\begin{align}
\label{eq:XL_qtransitions}
  p\left( (\omega,\lambda), (\omega+1,\lambda+1) \right)
= \frac{q^{\lambda-\omega}-q^{2(\lambda+1)}}{2 (1-q^{2(\lambda+1)}) },
  & \quad
  p\left( (\omega,\lambda), (\omega-1,\lambda+1) \right)
= \frac{1-q^{\lambda-\omega+2}}{2 (1-q^{2(\lambda+1)}) } \ , \\
  p\left( (\omega,\lambda), (\omega+1,\lambda-1) \right)
= \frac{1-q^{\lambda-\omega}}{2 (1-q^{2(\lambda+1)}) } \ ,
  & \quad
    p\left( (\omega,\lambda), (\omega-1,\lambda-1) \right)
= \frac{q^{\lambda-\omega+2}-q^{2(\lambda+1)}}{2 (1-q^{2(\lambda+1)}) } \ ,
 \nonumber
\end{align}
with the convention that $0^0 = 1$. In particular, the limit $q \rightarrow 1$ coincides with the result of the previous section, while $q \rightarrow 0$ coincides with Pitman's theorem. 
\end{thm}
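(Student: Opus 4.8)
The plan is to construct the quantum Markov chain $(X,\Lambda)$ on $\Uc_q(\slfrak_2)$ by mimicking Biane's construction from Section \ref{subsection:bianeQW}, but with the trivial coproduct $\Delta_0$ replaced by the genuine coproduct $\Delta_q$ of the quantum group $\Uc_q(\slfrak_2)$, and then to extract the classical transition operator by restricting to a commutative subalgebra. Concretely, I would set $\Ac_q := \varinjlim_n \Uc_q(\slfrak_2)^{\otimes n}$, equip it with the product state $\tau = \tr\rho_1^{\otimes \infty}$ built from the $2$-dimensional representation, and define measurement morphisms $M_n^q: \Uc_q(\slfrak_2) \to \Ac_q$ by $M_1^q = 1$ and $M_n^q = (M_{n-1}^q \otimes 1)\circ \Delta_q$. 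The pair $(X,\Lambda)$ is then $X_n := M_n^q(\text{weight-type observable})$ and $\Lambda_n := M_n^q(\text{Casimir})$; the point is that $X$ and $\Lambda$ generate a commutative subalgebra (they are simultaneously diagonalizable: weight and Casimir eigenvalue), so the quantum Markov chain they carry reduces to an honest classical Markov chain on the stated state space $\{(\omega,\lambda): \omega \in \{-\lambda,-\lambda+2,\dots,\lambda\}\}$.

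The heart of the computation is the Clebsch--Gordan / branching rule for $\Uc_q(\slfrak_2)$: tensoring an irreducible highest-weight module $V_\lambda$ with the fundamental module $V_1$ decomposes as $V_\lambda \otimes V_1 \cong V_{\lambda+1}\oplus V_{\lambda-1}$, exactly as in the classical case, but the projections onto the summands — equivalently, the quantum Clebsch--Gordan coefficients — are $q$-deformed. The transition probabilities in \eqref{eq:XL_qtransitions} are precisely the diagonal matrix entries, in the weight basis, of the composition ``include $V_\lambda$-weight-$\omega$ vector into $V_\lambda\otimes V_1$, then project onto $V_{\lambda\pm1}$'', weighted by the ratio of $q$-dimensions $[\lambda\pm1+1]_q / (2[\lambda+1]_q)$ coming from the normalized trace $\tr\rho_1$ in the state $\tau$. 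So the key step is: write down the two weight vectors of $V_1$ and the highest/lowest weight structure of $V_\lambda$, compute the two vectors $v_\omega \otimes e_{\pm}$ decomposed along $V_{\lambda+1}\oplus V_{\lambda-1}$ using the explicit action of $E,F,K^{\pm\half}$ and the relation $EF-FE = (K-K^{-1})/(q-q^{-1})$, and read off the squared coefficients. One must be careful with the convention $0^0=1$, which handles the boundary weights $\omega = \pm\lambda$ where some $q$-numbers degenerate.

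After that, two sanity checks close the proof. For $q\to 1$: every $q$-integer $[n]_q \to n$, the Clebsch--Gordan coefficients become the classical SU(2) ones, and \eqref{eq:XL_qtransitions} reduces termwise to \eqref{eq:XL_transitions}, recovering Theorem \ref{thm:Biane}. For $q\to 0$: one checks that $p((\omega,\lambda),(\omega-1,\lambda+1)) \to \half$ and $p((\omega,\lambda),(\omega+1,\lambda-1)) \to 0$ when $\omega<\lambda$, while at the top weight $\omega=\lambda$ one gets $p((\lambda,\lambda),(\lambda+1,\lambda+1)) = \half$ and $p((\lambda,\lambda),(\lambda-1,\lambda+1))=\half$ (using $0^0=1$); marginalizing out $\omega$ then yields exactly the Pitman kernel $Q$ of \eqref{eq:LR_transitions}, and one verifies that the conditional law of $\omega$ given $\Lambda$ is the uniform crystal measure on $\{-\lambda,\dots,\lambda\}$ matching Theorem \ref{thm:PitmanDiscrete}. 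I expect the main obstacle to be purely bookkeeping: pinning down the correct normalization of the weight basis of $V_\lambda$ so that the quantum Clebsch--Gordan coefficients come out in the precise form appearing in \eqref{eq:XL_qtransitions} (sign conventions, whether one uses $K^{\half}$ vs $K$, and the placement of $q$ vs $q^{-1}$ all shift the formulas), together with checking that the $q\to 0$ and $q\to 1$ degenerations are uniform enough to be taken termwise on the already-classical transition operator.
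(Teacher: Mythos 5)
The paper does not prove Theorem~\ref{thm:Biane_Pitman}: it is quoted from Biane~\cite{B_crystals}, followed only by a remark on the $q\to0$ limit. The closest internal argument is in subsection~\ref{subsection:qLR}, where the simultaneous eigenspaces $V^q_J\subset V^q(\hbar)^{\otimes n}$ are built inductively via $V^q(\lambda)\otimes V^q(1)\cong V^q(\lambda+1)\oplus V^q(\lambda-1)$, but that lemma only tracks $\Lambda$. Your outline --- run Biane's quantum walk with the $\Uc_q(\slfrak_2)$ coproduct, simultaneously diagonalize the $K$- and Casimir-measurements, read the transitions off the $q$-Clebsch--Gordan decomposition, and check both degenerations --- is the right strategy and is essentially Biane's. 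Two of your specific computations are wrong, though, and would have derailed the proof had you carried it out.

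The weighting is \emph{not} a ratio of $q$-dimensions. Since $\tau$ is built from the \emph{ordinary} normalized trace $\tr\rho_1$, each new tensor leg contributes an unconditional factor $\tfrac12$, and all $q$-dependence sits in the squared $q$-Clebsch--Gordan coefficient: $p\big((\omega,\lambda),(\omega\pm1,\lambda')\big)=\tfrac12\,|\mathrm{CGC}|^2$. Averaging $\omega$ over its uniform conditional law given $\Lambda_n=\lambda$ then yields $Q(\lambda,\lambda\pm1)=\frac{\lambda\pm1+1}{2(\lambda+1)}$, a ratio of \emph{ordinary} dimensions; this $q$-independence is exactly the quantum-group rigidity the paper emphasizes in subsections~\ref{subsection:qLR} and~\ref{subsection:rIndependence}, and it would be destroyed by $q$-dimensions. (A useful check: the $q\to1$ limit of \eqref{eq:XL_qtransitions} is $\frac{\lambda+\omega+2}{4(\lambda+1)}=\tfrac12\cdot\frac{\lambda+\omega+2}{2(\lambda+1)}$, consistent with the $\tfrac12|\mathrm{CGC}|^2$ picture.) Your $q\to0$ check is also miscomputed: for $\omega<\lambda$ one has $p\big((\omega,\lambda),(\omega+1,\lambda-1)\big)=\frac{1-q^{\lambda-\omega}}{2(1-q^{2(\lambda+1)})}\to\tfrac12$, not $0$; the transitions that vanish are $(\omega,\lambda)\to(\omega+1,\lambda+1)$ and $(\omega,\lambda)\to(\omega-1,\lambda-1)$. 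The coupling realized by \eqref{eq:XL_qtransitions} at $q=0$ is thus $\Lambda_n=2\sup_{k\le n}X_k-X_n$, the boundary being at the top weight $\omega=\lambda$, rather than $X_n-2\inf_{k\le n}X_k$; these have the same joint law with $\Lambda$ but are distinct pathwise couplings, exchanged by $X\mapsto-X$. Lastly, ``simultaneously diagonalizable'' needs more than centrality of the Casimir in a single leg: one must check that $M_k$ of the Casimir commutes with $M_n$ of both the Casimir and $K$ for all $k<n$, and it is precisely the inductive construction of $V^q_J$ in subsection~\ref{subsection:qLR} that supplies this.
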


Indeed, upon computing the transition probabilities as $q \rightarrow 0$, one realizes that if $\left( X_n \ ; \ n \geq 0 \right)$ is a standard random walk, then $\left( X, \Lambda \right)$ are coupled as follows. One checks that $\Lambda_n = \Pc(X)_n$ where the path transform $\Pc$ is defined on any path via:
$$ \Pc: \ X \mapsto \left( t \mapsto X_t - 2 \inf_{0 \leq s \leq t} X_s \right) \ .$$

We conclude this subsection by stating that Pitman's theorem, in its discrete version, has to do with quantum random walks on $\Uc_q\left( \slfrak_2 \right)$ and taking $q$ from $q=1$ to $q=0$, where crystals do appear. In fact, everything can be conveniently recast in terms of the Littelmann path model \cite{LittICM95, Litt95}, which is a combinatorial model for crystals. The random walks at hand are readily identified with crystal elements. For an overview, see the introduction of one of the author's PhD thesis \cite{C13}.

\medskip

We are ready to state the problem that is addressed in the paper:

\begin{question}
\label{question:main}
If the Pitman transform $\Pc$ is intimately related to crystals, appearing at the level of the representation theory of $\Uc_q( \slfrak_2 )$ at $q=0$, why does it also appear in the geometric context of Bougerol and Jeulin?

Why would there be crystal-like phenomenons by taking curvature to infinity ($r \rightarrow \infty$) in a symmetric space $\H^3 = SL_2(\C)/SU_2 \approx NA$?
\end{question}

It is certainly desirable to have single global picture, with an interplay between both the representation theory of $\Uc_q(\slfrak_2)$, as $q>0$ varies, and the geometry of the symmetric space $\H^3 = SL_2(\C) / SU_2$ with varying curvatures $r>0$. Such a unifying point of view should also extend to dynamics, by relating Biane's quantum random walks and the dynamic of Bougerol-Jeulin on $\H^3$.

\section{Statement of the main result}

At this point, let us summarize the landscape:
\begin{itemize}
 \item On the one hand, at $q=1$, there is Biane's construction of quantum random walks \cite{B91}. The diffusive limit is Brownian motion on the space $\sufrak_2^*$, which can be seen as a flat space with zero curvature ($r=0$).
 \item On the other hand, at $q=0$, using Kashiwara crystals, for example in the path model form, one recovers Pitman's theorem. The latter is also recovered upon taking a Brownian motion on the symmetric space $\H^3 = SL_2(\C)/SU_2$ and taking the curvature to infinity ($r \rightarrow \infty$).
\end{itemize}

Thus, we want to interpolate the two different regimes, and perhaps reinterpret the parameter $q$ in quantum groups as a curvature parameter. The most fruitful idea in trying to answer Question \ref{question:main} is to discard the idea that $q = e^h$ in the Drinfeld-Jimbo quantum group $\Uc_q\left( \slfrak_2 \right)$, with $h$ being a Planck constant. The following conversation will take us back to the genesis of quantum groups, which we feel is necessary in order to really distinguish what is quantum and what is not. We begin by introducing two important ingredients $\Uc_q^\hbar(\slfrak_2)$ and $\C\left[ (SU_2^*)_r \right]$. These are tailored so that the formal diagram in Figure \ref{fig:commutativeDiag} commutes.

\begin{figure}[ht!]
\begin{tikzpicture}[align=center,node distance=1.25cm and 1.25cm, auto]
\node at (-3.5,0) (A) {$\Uc_{q}^\hbar(\slfrak_2)$ };
\node (B) [right =of A] {$\C\left[ (SU_2^*)_r \right]$} ;
\node (C) [below =of A] {$\Uc^\hbar(\slfrak_2) $};
\node (D) [below =of B] {$\C\left[ \sufrak_2^* \right]$};
\draw[->] (A) [above] to node {\tiny $\hbar \rightarrow 0$}(B);
\draw[->] (A) [left] to node {\tiny $r \rightarrow 0$} (C);
\draw[->] (-2.7,-2) -- (-1.1,-2) ; \draw (-2,-2.5) node[above]  {\tiny $\hbar \rightarrow 0$} ; 
\draw[->] (B) to node {\tiny $r \rightarrow 0$} (D);
\draw[,dash pattern={on 5pt off 6pt}] (-2.0,1.0) to (-2.0,-3.3);
\draw[,dash pattern={on 5pt off 6pt}] (-7,-0.7) to (3.2,-0.7);
\node at (-5.0,1.5) {\footnotesize Quantum mechanics, \\
                     \footnotesize Representation theory
                    };
\node at (1.5,1.5)    {\footnotesize Semiclassical limit,\\
					 \footnotesize Poisson geometry
                    };
\node at (5,0.25)  {\footnotesize Curved setting};
\node at (5,-1.5)  {\footnotesize Flat setting};
\end{tikzpicture}
\caption{A formal commutative diagram}
\label{fig:commutativeDiag}
\end{figure}
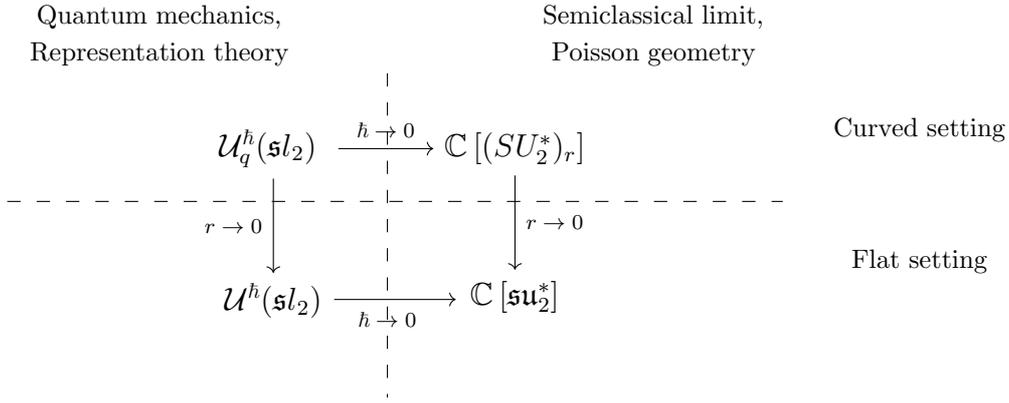

Quoting Kirillov \cite[p.305]{Survey05}, who attributes the statement to Drinfeld, the first approximation to quantum groups as classical objects are Poisson-Lie groups. This leads us to the first ingredient, that is a family of Poisson-Lie groups $(SU_2^*)_r$ with varying curvatures $r > 0$. $\C\left[ (SU_2^*)_r \right]$ will denote the coordinate algebra. In order for such an object to appear as a semi-classical limit, we have to revisit the presentation given in \eqref{eq:Uq_pres}. We require a different presentation $\Uc_q^\hbar(\slfrak_2)$ of the Jimbo-Drinfeld quantum group with two parameters $\hbar>0$ and $q=e^{-r}$.

Again, as mentioned just before Theorem \ref{thm:Biane}, one notices the discrepancy between $\slfrak_2 = \sufrak_2 \otimes \C$ in the quantum picture and $\sufrak_2$ in the semiclassical picture. Now, let us define $\Uc_q^\hbar(\slfrak_2)$ and $\C\left[ (SU_2^*)_r \right]$.

\subsection{Definitions}

\subsubsection{A different presentation of the Drinfeld-Jimbo quantum group}
We define $\Uc_q^\hbar\left( \slfrak_2 \right)$ with $q = e^{-r}$ as follows. As explained before, $r>0$ has to be understood as curvature and $\hbar>0$ is the actual Planck constant. We set
\begin{align}
\label{def:heteroQuantumGroup}
\Uc_q^\hbar\left( \slfrak_2 \right) := \langle K^\half, K^{-\half}, E, F \rangle / \Rc
\end{align}
where this time $K^\half = q^{\half H} = e^{-\half rH}$ and $\Rc$ is the two-sided ideal  generated by the relations:
\begin{equation}
\label{eq:Uqh_relations}
K^\half E K^{-\half} = q^{  \hbar} E \ ,  \quad
K^\half F K^{-\half} = q^{- \hbar} F \ ,  \quad
EF - FE = \hbar \frac{K^{-1}-K}{2r} = \hbar \frac{e^{rH}-e^{-rH}}{2r}\ .
\end{equation}

Furthermore, $\Uc_q^\hbar\left( \slfrak_2 \right)$ is a Hopf algebra once endowed with the co-product $\Delta_r\colon \Uc_q^\hbar\left( \slfrak_2 \right) \rightarrow \Uc_q^\hbar\left( \slfrak_2 \right) \otimes \Uc_q^\hbar\left( \slfrak_2 \right)$:
\begin{align}
\label{def:Delta_r}
   &
   \left\{
   \begin{array}{ccc}
   \Delta_r\left( e^{\half r H} \right) & = & e^{\half r H} \otimes e^{\half r H} \ ,\\
   \Delta_r\left( F \right) & = & F \otimes e^{ \half r H} + e^{-\half r H} \otimes F \ , \\
   \Delta_r\left( E \right) & = & E \otimes e^{ \half r H} + e^{-\half r H} \otimes E \ , 
   \end{array}
   \right.
\end{align}
while the antipode and counit maps $S^\hbar_r, \varepsilon_r: \Uc_q^\hbar\left( \slfrak_2 \right) \rightarrow \Uc_q^\hbar\left( \slfrak_2 \right)$ are given by:
\begin{align}
\label{eq:Uq_S}
& S^\hbar_r\left( e^{\pm \half r H} \right) = e^{\mp \half r H},  \ S^\hbar_r(E) = - q^{\hbar} E, \ S^\hbar_r(F) = - q^{-\hbar} F ,
\end{align}
\begin{align}
\label{eq:Uq_epsilon}
& \varepsilon_r\left( e^{\pm \half r H} \right) = 1,  \ \varepsilon_r(E) = \varepsilon_r(F) = 0 \ .
\end{align}
 It is easy to check that, over $\C$, there is a Hopf algebra isomorphism $\Phi\colon \Uc_{q^\hbar}\left( \slfrak_2 \right) \rightarrow \Uc_q^\hbar\left( \slfrak_2 \right)$, between the classical presentation \eqref{eq:Uq_pres} of Drinfeld-Jimbo and ours \eqref{def:heteroQuantumGroup}, such that:
\begin{align}
\label{def:isom}
\Phi(H) = \frac{H}{\hbar} \ , \quad 
\Phi(E) = E \sqrt{\frac{2r}{\hbar (q^{-\hbar} - q^{\hbar})}} \ , \quad
\Phi(F) = F \sqrt{\frac{2r}{\hbar (q^{-\hbar} - q^{\hbar})}} \ .
\end{align}

Strictly speaking, the first equation holds upon continuously extending $\Phi$ to a completion so that $K = q^{\hbar H} \in \Uc_{q^\hbar}\left( \slfrak_2 \right)$ maps to:
$$ \Phi(K) 
   = \Phi( e^{-r \hbar H} )
   = e^{-rH}
   = K   
   \ .
$$

As such, the usual Casimir element:
\begin{align}
\label{def:usualCasimir}
C^q := EF + \frac{q^{-1}K + q K^{-1}}{(q-q^{-1})^2} \in \Uc_{q}\left( \slfrak_2 \right)
\end{align}
is changed to
$$ C^{q^\hbar} := EF + \frac{q^{-\hbar}K + q^\hbar K^{-1}}{(q^\hbar-q^{-\hbar})^2} \in \Uc_{q^\hbar}\left( \slfrak_2 \right) \ ,$$
which maps via the isomorphism $\Phi$ and a rescaling to
\begin{align}
\label{def:Casimir}
C^{r, \hbar}  &:= 
\ r \hbar \left( q^{-\hbar} - q^{\hbar} \right) \Phi\left( C^{q^\hbar} \right)
= \half \left( 4r^2 EF + \left( q^{-\hbar}K + q^\hbar K^{-1} \right) \frac{2r\hbar}{(q^{-\hbar} - q^{\hbar})} \right) \\
\nonumber
& \, = \half \left( 4r^2 EF + \left( e^{r\hbar}K + e^{-r\hbar} K^{-1} \right) \frac{2r\hbar}{(e^{r\hbar} - e^{-r\hbar})} \right) \ .
\end{align}
Naturally, $C^{r, \hbar}$ generates the center of $\Uc_q^\hbar\left( \slfrak_2 \right)$ by \cite[Theorem VI.4.8]{K12}. We also define the element $\Lambda^{r, \hbar}$ belonging to a completion of $\Uc^\hbar_q \left(\slfrak_2 \right)$ as
\begin{equation}
\label{def:elementLambda}
\Lambda^{r, \hbar} := \frac1r \Argcosh \left( \frac{e^{r\hbar}-e^{-r\hbar}}{2r\hbar} \, C^{r, \hbar} \right) -\hbar \ .
\end{equation}
As we will see in the upcoming subsection \ref{subsection:representations}, the definition of $\Lambda^{r, \hbar}$ has been tailored so that $\Lambda^{r, \hbar}$ acts as the {\it appropriate} constant in any fixed irreducible representation of $\Uc^\hbar_q \left(\slfrak_2 \right)$.

Finally, the analogue of choosing a real form for a Lie algebra in the context of Hopf algebras is exactly the choice of an anti-involution $\dagger$. We recommend the discussion in \cite[Section 1.2.7]{KS12} regarding that matter. Here, the compact real form of $\Uc_q^\hbar\left( \slfrak_2 \right)$ is defined as the pair $\left( \Uc_q^\hbar\left( \slfrak_2 \right),  \dagger \right)$ where $\dagger$ is the algebra anti-involution given by \cite[p.59]{KS12}:
\begin{align}
\label{def:quantum_dagger}
K^\dagger = K, \ \quad E^\dagger = F, \ \quad F^\dagger = E.
\end{align}
This real form is compatible with the real form we shall choose for Poisson-Lie groups.

\subsubsection{Poisson-Lie groups with varying curvatures \texorpdfstring{$r > 0$}{r > 0}}

Consider $B \subset SL_2(\C)$ as the Borel subgroup:
$$ 
   B := \left\{ 
   \begin{pmatrix}
   a & 0 \\
   b & a^{-1}
   \end{pmatrix} \ | \ 
   a \in \C^*, \ 
   b \in \C
   \right\} \ ,
$$
while $B^+$ is the transpose. If $b \in B \cup B^+$, then $[b]_0$ denotes the projection onto the diagonal. The following complex group will play an important role:
\begin{align}
\label{def:poisson_dual_G} 
SL_2^* := & \left\{ (b, b^+) \in B \times B^+ \ | \ [b]_0 = [b^+]^{-1}_0 \right\} \ ,
\end{align}
which is called the Poisson-Lie group dual to $SL_2(\C)$, equipped with the standard structure (see \cite{CP95} or \cite{K97}). The group law is the pair-wise matrix multiplication. Its Lie algebra
\begin{align}
\label{def:poisson_dual_g} 
\slfrak_2^* := & T_e SL_2^* = \bfrak \oplus_{\hfrak} \bfrak^+ \ ,
\end{align}
is made of the two triangular subalgebras $\bfrak = T_e B$ and $\bfrak^+ = T_e B^+$, with the diagonal parts in $\hfrak$ being opposite. Here $\hfrak := \afrak + i \afrak \subset \slfrak_2$ is simply the Abelian subalgebra of complex diagonal matrices.

In order to have varying curvatures $r > 0$ and interpolate with the trivial Poisson-Lie group $\slfrak_2^*$, we define $\left( SL_2^* \right)_r$ as the Lie group with Lie algebra $\left( \slfrak_2^*, \ r [ \cdot, \cdot ]_{\slfrak_2^*} \right)$. This is nothing more than $SL_2^*$ as a space but with a different group law. We define $\C\left[ \left(SL_2^*\right)_r \right]$ as the polynomial algebra generated by the variables $e^{\half r H}$, $e^{-\half r H} = \left( e^{\half r H} \right)^{-1}$, $E$ and $F$:
\begin{align}
\label{def:coordinateAlgebra}
   \C\left[ \left(SL_2^*\right)_r \right]
   :=
   & \ 
   \C\left[ e^{\half r H}, e^{-\half r H}, 
            E, F \right] \ .
\end{align}
In turn, these variables are seen as coordinate functions by writing:
\begin{align}
\label{def:functionsHEF}
   \forall g \in \left(SL_2^*\right)_r, \
   & 
   g = \left(
   \begin{pmatrix}
   e^{\half r H(g)}      & 0 \\
   2r \ F(g)             & e^{-\half r H(g)}
   \end{pmatrix},
   \begin{pmatrix}
   e^{-\half r H(g)} & 2r \ E(g) \\
   0                 & e^{\half r H(g)}
   \end{pmatrix}
   \right) \ .
\end{align}
When convenient, we will drop the dependence in $g$ for $f(g) = f \in \{ H, E, F \}$, as in the definition of the coordinate algebra $\C\left[ \left(SL_2^*\right)_r \right]$.

Now define:
\begin{align}
\label{def:SU2star}
   (SU_2^*)_r := & 
                 \left\{ g\in (SL_2^*)_r \ | \ 
   				 H(g) \in \R, \ E(g)=\overline{F(g)}
                 \right\} \ .
\end{align}
This is clearly a subgroup of  $(SL_2^*)_r$ and we  will see in the next section that it is the Poisson-Lie group dual to $SU_2$, via an involution $\dagger$ which respects the duality at the level of Hopf algebras. Its curvature will also be shown to vary with $r>0$. In view of the definition of the elements of $(SU_2^*)_r$, all the information is contained in the lower Borel subgroup with positive diagonals, leading to a natural identification
$$
\left(SU_2^*\right)_r \approx NA \ .
$$
The corresponding coordinate algebra is naturally denoted $\C[(SU_2^*)_r]$.

There is also the following, more analytic, presentation of $\left(SU_2^*\right)_r$. Notice that the exponential map is a diffeomorphism $\exp: \sufrak_2^* \approx \nfrak \oplus \afrak \xrightarrow{\sim} NA \approx \left( SU_2^* \right)_r$. As such, we can identify $\sufrak_2^*$ and $\left( SU_2^* \right)_r$ as topological spaces. Then we define a group law with a parameter $r > 0$ via:
\begin{align}
\label{eq:groupLaw}
 \forall (X, Y) \in \sufrak_2^* \times \sufrak_2^*,
 \ X *_r Y & := \frac{1}{r} \log\left( e^{r X} e^{r Y} \right) \ .
\end{align}
The new group is denoted by $\left( \left(SU_2^*\right)_r, *_r \right)$ and its Lie bracket is naturally $r \left[ \cdot, \cdot \right]_{\slfrak_2^*}$, i.e. the rescaling of the original bracket by a factor $r > 0$. Clearly, as $r \rightarrow 0$, the group $\left( \left(SU_2^*\right)_r, *_r \right)$ becomes the Abelian group $\left( \sufrak_2^*, + \right)$. 
 
\subsection{Main result}

In order to construct the quantum walk on $\Uc_q^\hbar(\slfrak_2)$, this latter algebra is taken as the algebra of observables for one increment. Thanks to the framework detailed in subsection \ref{subsection:bianeQW}, our algebra of non-commutative random variables is the inductive limit
\begin{align}
\label{def:quantumOmega}
 \Ac^{r,\hbar}
 := & \varinjlim_{n} \left( \Uc_q^\hbar\left( \slfrak_2 \right)\right)^{\otimes n} \ .
\end{align}

Given that $\Uc_q^\hbar(\slfrak_2)$ is isomorphic to the usual Jimbo-Drinfeld quantum group, the representations are essentially the same. The specifics are not needed for now. The state $\tau$ is the product state using the standard representation $\C^2$. The pair $\left( \Ac^{r, \hbar}, \tau \right)$ will be our working non-commutative probability space.

Measurement operators are exactly the same as Eq. \eqref{eq:def_measurementBiane}, except that we have to use the coproduct $\Delta_r$. As such, the morphism of algebras $M_n: \Uc_q^\hbar(\slfrak_2) \rightarrow \Ac^{r, \hbar}$ defined for discrete times $n \in \N$ are as follows. $M_0=\varepsilon_r$ is given by the counit, and
\begin{align}
\label{eq:def_measurement}
   \begin{cases}
   M_1 & =  1 \ , \\
   M_n & =  \left( M_{n-1} \otimes 1 \right) \circ \Delta_r \ , \ \text{for $n\geq2$} .
   \end{cases}
\end{align}
Since we want the random walk to classically start from the identity, and the quantum version consists in expressing everything dually at the level of measurement operators, one sees that $M_0$ has to be taken as the counit. 

The convention \eqref{eq:timeConvention} is still in place. As a random walk on $\Uc_q^\hbar\left( \slfrak_2 \right)$, we define three non-commutative processes via:
$$ \forall t \in \R_+, \ S_t^{r,\hbar} := M_{t/\hbar^2}\left( S \right) $$
for each generator $S \in \{H, E, F\}$ of the quantum group $\Uc_q^\hbar\left( \slfrak_2 \right)$. This three-dimensional non-commutative process is neatly repackaged in matrices of $\left( SU_2^* \right) _r$ with non-commutative entries:
\begin{align}
\label{eq:def_x}
\forall t \in \R_+, \ g_t^{r, \hbar} := & 
\left(
   \begin{pmatrix}
   e^{\half r H_t^{r, \hbar}} & 0 \\
   2r \ F_t^{r, \hbar}        & e^{-\half r H_t^{r, \hbar}}
   \end{pmatrix},
   \begin{pmatrix}
   e^{-\half r H_t^{r, \hbar}} & 2r \ E_t^{r, \hbar} \\
   0                           & e^{\half r H_t^{r, \hbar}}
   \end{pmatrix}
   \right)
  \in \left( SU_2^* \right) _r \otimes \Ac^{r,\hbar}
   \ .
\end{align}
By stating that the above quantity is in $\left( SU_2^* \right) _r \otimes \Ac^{r,\hbar}$, we are implicitly saying that the lower and upper triangular parts are $\dagger$-conjugate. Therefore $g^{r, \hbar}_t$ can be seen as an element in $NA$ with operator-valued entries. The quantum dynamic $\left( \Lambda_t^{r, \hbar} \ ; \ t \geq 0 \right)$ is defined from the measurement of the Casimir element \eqref{def:Casimir} thanks to the explicit expression:
\begin{align}
\label{eq:def_Lambda}
\forall t \in \R_+, \
\frac{2r \hbar}{e^{r\hbar}-e^{-r\hbar}}
\cosh \left( r \hbar + r \Lambda_t^{r, \hbar} \right)
:= \ &
M_{t/\hbar^2}\left( C^{r, \hbar} \right)
\ .
\end{align}
This is equivalent to directly setting $\Lambda_t^{r, \hbar} := M_{t/\hbar^2}\left( \Lambda^{r, \hbar} \right)$ after continuously extending the measurement operators to the completion where $\Lambda^{r, \hbar}$  lives (see Eq. \eqref{def:elementLambda}).

\medskip

We are ready to state the main result of this paper, which unifies the results of Biane on the one hand and Bougerol-Jeulin on the other hand. Since the crystal regime $q = e^{-r} \rightarrow 0$ is tractable in both quantum and semi-classical settings, it explains why crystal-like phenomena appear upon taking infinite curvature limits. This recovers indeed Pitman's $2M-X$ Theorem in the discrete and continuous versions.

\begin{thm}[Main Theorem]
\label{thm:main}
In the sense of (possibly non-commutative) moments, we have the following convergences in law between processes indexed by $t \in \R_+$:
$$
\hspace{-1cm}
\begin{tikzcd}
\begin{array}{c}
\Lambda_t^{\infty,\hbar} = X^\hbar_t - 2\inf_{0\leq s \leq t} X^\hbar_t\\
\text{\small Pitman's Theorem \ref{thm:PitmanDiscrete}}\\
\text{\small (discrete case) }
\end{array} 
\arrow{r}{\hbar \rightarrow 0} 
&
\begin{array}{c}
\Lambda_t^{\infty}= X_t - 2 \inf_{0 \leq s \leq t} X_s \\
\text{\small Pitman's Theorem \ref{thm:PitmanContinuous}}\\
\text{\small (continuous case) }
\end{array} \\
\begin{array}{c}
g_t^{r,\hbar} \in   \left( SU_2^* \right) _r \otimes \Ac^{r,\hbar}    \\
\Lambda_t^{r,\hbar} \\
\text{\small Quantum random walks}\\
\text{\small on $\Uc_q^\hbar(\slfrak_2)$ as in Eq. \eqref{eq:def_x}}
\end{array}
\arrow{u}[swap]{r \rightarrow \infty}
\arrow{d}{r \rightarrow 0}
\arrow{r}{\hbar \rightarrow 0} &
\ 
\begin{array}{c}
g_t^r \in \left( SU_2^* \right) _r \otimes L^{\infty -}(\Omega) \\
\Lambda_t^r = \frac1r \Argcosh \circ \tr \left( g_t^r \left( g_t^r \right)^\dagger   \right) \\
\text{\small Bougerol-Jeulin's convolution dynamic}\\
\text{\small and its radial part as in Theorem \ref{thm:BJ}}
\end{array}
\arrow{u}[swap]{r \rightarrow \infty}
\arrow{d}{r \rightarrow 0}
\\
\ 
\begin{array}{c}
x_t^{0,\hbar}=\begin{pmatrix}
\half X^\hbar_t     & 0 \\
Y_t^\hbar + i Z_t^\hbar   & -\half X_t^\hbar
\end{pmatrix} \in \sufrak_2^* \otimes \Ac^{0,\hbar}\\
\Lambda_t^{0,\hbar} = \sqrt{ \half \hbar^2+ \left (X_t^\hbar\right )^2 + \left(Y_t^\hbar\right )^2 + \left(Z_t^\hbar\right)^2 }\\
\text{\small Biane's quantum random walks}\\
\text{\small  on $\Uc^\hbar(\slfrak_2)$ as in Theorem \ref{thm:Biane} }
\end{array}
\arrow{r}{\hbar \rightarrow 0} & 
\begin{array}{c}
x^0_t=\begin{pmatrix}
\half X_t     & 0 \\
Y_t + i Z_t   & -\half X_t
\end{pmatrix} \in \sufrak_2^* \otimes L^{\infty -}(\Omega)\\
\Lambda_t^0 = \sqrt{ X_t^2 + Y_t^2 + Z_t^2 } \\
 \text{\small Flat Brownian Motion on $\sufrak_2^* \approx \R^3$ } \\
 \text{\small and its radial part}
\end{array}
\end{tikzcd}
$$
Moreover, on both quantum and semi-classical pictures, i.e. for $\hbar>0$ and $\hbar=0$, the dynamic of $\Lambda^{r, \hbar}$ does not depend on $r$.
\end{thm}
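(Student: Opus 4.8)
The plan is to establish the Main Theorem by verifying each of the six arrows in the diagram separately, since they concern convergence of different objects in the sense of (non-commutative) moments, and then to isolate the $r$-independence statement, which is the genuinely subtle point. The structure of the argument follows the commuting diagram: the inner square (the four semi-classical limits $\hbar\to0$ and $r\to0$ among $\Uc_q^\hbar(\slfrak_2)$, $\C[(SU_2^*)_r]$, $\Uc^\hbar(\slfrak_2)$, $\C[\sufrak_2^*]$) is already set up by construction in Figure \ref{fig:commutativeDiag}, so the walks and their Casimir-images transported along these maps converge essentially because the coproducts $\Delta_r$ degenerate to $\Delta_0$ and the Casimir $C^{r,\hbar}$ degenerates to the classical one. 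The two vertical $r\to0$ arrows amount to checking that $M_{t/\hbar^2}$ built from $\Delta_r$ converges, increment by increment, to $M_{t/\hbar^2}$ built from $\Delta_0$; since there are only $O(t/\hbar^2)$ increments and each generator's coproduct converges, a telescoping/Taylor argument in $r$ suffices. The bottom horizontal arrow is precisely Biane's Theorem \ref{thm:Biane_CV}. The top horizontal arrow is the diffusive limit from Pitman's discrete Theorem \ref{thm:PitmanDiscrete} to the continuous Theorem \ref{thm:PitmanContinuous}, i.e.\ Donsker plus the rescaling of the kernel $Q$.

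The two $r\to\infty$ arrows are the crystal-limit arrows and carry the main content. For the quantum side ($\hbar>0$), I would compute the $q=e^{-r}\to0$ limit of the transition probabilities: one shows that the quantum Markov chain $(X^\hbar,\Lambda^{r,\hbar})$ attached to $\Uc_q^\hbar(\slfrak_2)$ has a classical transition operator which, after the rescaling $q=e^{-r}$ and the normalization built into \eqref{def:elementLambda} and \eqref{eq:def_Lambda}, converges termwise to the transition kernel coupling a simple random walk $X^\hbar$ with its Pitman transform $\Lambda^{\infty,\hbar}=X^\hbar-2\inf X^\hbar$. Concretely this is Biane's Theorem \ref{thm:Biane_Pitman} transported through the isomorphism $\Phi$ of \eqref{def:isom}, with the $\hbar$-bookkeeping carried along; the key is that $\Argcosh\big(\tfrac{e^{r\hbar}-e^{-r\hbar}}{2r\hbar}C^{r,\hbar}\big)/r-\hbar$ tends, as $r\to\infty$ at fixed $\hbar$, to the correct integer-valued ``highest weight'' functional which the crystal/path-model picture identifies with $X^\hbar-2\inf X^\hbar$. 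For the semi-classical side ($\hbar=0$), the $r\to\infty$ arrow is exactly the Bougerol--Jeulin limit \eqref{eq:BJlimits} of Theorem \ref{thm:BJ}, namely that $\Lambda^r_t\to X_t-2\inf_{s\le t}X_s$ in probability; here I would use the explicit formula \eqref{eq:BJdynamic} together with the elementary asymptotics $\tfrac1r\Argcosh(\tfrac12 e^{2ru})\to u$ and a Laplace-type estimate showing $\tfrac1r\log\big(\tfrac12 r^2|e^{\frac r2 X_t}\int_0^t e^{-rX_s}d(Y+iZ)_s|^2+\cosh(rX_t)\big)\to \max(X_t,\,-2\inf_{s\le t}X_s+X_t,\,0)=X_t-2\inf_{s\le t}X_s$.

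The hard part is the final sentence: that on both pictures the law of $\Lambda^{r,\hbar}$ does not depend on $r$. On the quantum side ($\hbar>0$) this should follow from the rigidity of quantum groups — the representation theory of $\Uc_q^\hbar(\slfrak_2)$ is, up to the isomorphism $\Phi$, independent of $q$, and $\Lambda^{r,\hbar}$ was defined in \eqref{def:elementLambda} precisely so as to act as the $r$-independent ``label'' of an irreducible summand; thus the branching rules governing the Markov chain $M_n(\Lambda^{r,\hbar})$ — which are just Clebsch--Gordan multiplicities, independent of $q$ — force the law of $\Lambda^{r,\hbar}$ to be $r$-independent, and in fact equal to the $\hbar$-rescaled Pitman-transformed walk for every $r$. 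Passing $\hbar\to0$ in this $r$-uniform identity then yields the $r$-independence on the semi-classical side, which is the ``short argument based on the rigidity of quantum groups'' promised after Theorem \ref{thm:BJ}; alternatively one gives the direct Bougerol--Jeulin argument reinterpreted via spherical harmonic analysis on $\H^3$, where the $r$-independence reflects the fact that the relevant spherical functions, suitably normalized, have an $r$-independent form. I expect the main obstacle to be making the interchange of limits rigorous at the level of non-commutative moments — i.e.\ justifying that the $r\to\infty$ limit of $M_{t/\hbar^2}(\Lambda^{r,\hbar})$ commutes with the moment functionals $\tau$ and with the inductive-limit structure of $\Ac^{r,\hbar}$, which requires the uniform control on the (now completed) algebras discussed in Appendix \ref{appendix:topologicalConsiderations}.
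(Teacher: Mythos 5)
Your overall decomposition into arrows and your treatment of the top, bottom, $r\to0$ and $r\to\infty$ arrows (Biane, Donsker, crystal limits), and especially of the $r$-independence of $\Lambda^{r,\hbar}$ via rigidity of quantum groups and the Littlewood--Richardson/Clebsch--Gordan dynamics, match the paper's strategy closely. But there is a genuine gap: you do not actually prove the middle horizontal arrow, the convergence as $\hbar\to0$ of the full non-commutative process $\left(g_t^{r,\hbar};t\ge 0\right)$ on $\Uc_q^\hbar(\slfrak_2)$ to the Bougerol--Jeulin process $\left(g_t^r;t\ge 0\right)$ for fixed $r>0$. You fold it into the remark that ``the inner square is already set up by construction in Figure \ref{fig:commutativeDiag}, so the walks ... converge essentially because the coproducts $\Delta_r$ degenerate to $\Delta_0$.'' That is the wrong inner square --- Figure \ref{fig:commutativeDiag} concerns algebras, not dynamics --- and the degeneration of coproducts does not by itself control a process built by iterating the coproduct $\sim t/\hbar^2$ times. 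In particular, the component $F_t^{r,\hbar}=M_{t/\hbar^2}(F)$ is not a classical process (the $M_n(F)$'s neither commute with each other nor are self-adjoint), and its limit is the stochastic integral $r e^{\frac{r}{2}X_t}\int_0^t e^{-rX_s}\,d\beta^\C_s$, so one must justify the convergence of a genuinely non-commutative discrete stochastic integral to an It\^o integral. The paper devotes Subsection \ref{subsection:qPitman} to exactly this: writing $M_n(F)$ as a non-commutative martingale transform via the coproduct identity \eqref{eq:coproductF}, comparing it against a discretized version $F_t^{\hbar,D}$ along a subdivision $D$, and controlling the error uniformly in $\hbar$ and $t$ using the non-commutative Burkholder--Davis--Gundy inequality of Pisier--Xu \cite{PX97} together with the classical BDG for the commuting process $M_s(K^{-1})$. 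This analytic step is the technical core of the proof and cannot be replaced by the algebraic commuting diagram. Relatedly, you misplace the technical difficulty: you expect the obstruction to be interchanging limits in the $r\to\infty$ arrow, but those limits are inherited from the known results of Biane (Theorem \ref{thm:Biane_Pitman}) and Bougerol--Jeulin (Theorem \ref{thm:BJ}); the genuinely new estimate is in the $\hbar\to0$ arrow at fixed $r>0$.

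Two smaller remarks. First, the diagram has seven arrows, not six; omitting one suggests the middle horizontal one slipped through. Second, the paper does not actually carry out your proposed ``telescoping/Taylor argument in $r$'' for the $r\to0$ vertical arrows: on the quantum side it simply identifies $r\to0$ with the degeneration of $\Uc_q^\hbar(\slfrak_2)$ to $\Uc^\hbar(\slfrak_2)$ and then invokes Biane's Theorem \ref{thm:Biane}, and on the semi-classical side it uses the explicit computation \eqref{eq:BJlimits} from Theorem \ref{thm:BJ}, so no new estimate is needed there --- your alternative would likely work, but it is not how the paper proceeds and is unnecessary once the heavy lifting in the middle horizontal arrow is done.
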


\subsection{Structure of the paper}
Section \ref{section:commutativeDiagram} is aimed at giving a precise meaning to the formal commutative diagram \ref{fig:commutativeDiag}. This is given as Theorem \ref{thm:commutativeDiagram4Spaces} which is the geometric shadow of the commutative diagram in the Main Theorem. In essence, that theorem is implicit in Drinfeld's foundational ICM talk \cite{D88}. Nevertheless, the value of this section resides in giving a precise statement calibrated for this paper and in interpreting its ingredients. To that endeavor, we start by detailing classical coproducts and Poisson brackets with the goal of showing that the presentation \eqref{def:heteroQuantumGroup} can be surmised from classical objects. Then we give a more natural definition of the involution $\dagger$ for $\left( SL_2^* \right)_r$ and explain why $r>0$ is indeed a curvature parameter. Only then we will be able to formulate Theorem \ref{thm:commutativeDiagram4Spaces} as a result about deformation of algebras (via curvature $r$) and quantizations (via the Planck constant $\hbar$). For later use, we conclude the section by recording how changing the presentation of the Jimbo-Drinfeld quantum group, from \eqref{eq:Uq_pres} to \eqref{def:heteroQuantumGroup}, rescales the representations.

In section \ref{section:statics}, we prove in Theorem \ref{thm:staticLimits} that quantum observables in large representations become classical observables. This is the implementation of Kirillov's orbit method, which proves quantitatively that large representations behave like symplectic leaves (here dressing orbits), and that the tensor product of two representations behaves like convolution of two orbital measures. Furthermore, we demonstrate the appearance of the crystal tensor product rule in the $r \rightarrow \infty$ limit. We feel that this is instructive to prepare for the proof of the Main Theorem which deals with diffusive limits. In that sense, the Main Theorem is about infinitely many tensor products and infinitely many convolutions.

Finally, in section \ref{section:dynamics}, we tackle the proof of the Main Theorem \ref{thm:main}, which unifies the representation theoretic construction of Biane and the geometric construction of Bougerol-Jeulin. There, we isolate in separate subsections an independent proof of Theorem \ref{thm:BJ} and the argument that the dynamic of $\Lambda$ does not depend on $r>0$.

\section{Commutative and non-commutative geometry of Poisson-Lie groups}
\label{section:commutativeDiagram}

\subsection{Coproducts and Poisson brackets}
Now that we know that the Poisson-Lie group $\left( SL_2^* \right)_r$ will play an important role for the Main Theorem \ref{thm:main}, we will begin this section by detailing two structures on it: the coproduct and the Poisson bracket. Recalling the general definition \eqref{def:coproduct} of a coproduct for the coordinate algebra of a group, we have:
\begin{lemma}
\label{lemma:coproduct_r}
The coproduct on $\C\left[ \left(SL_2^*\right)_r \right]$ is given by:
$$ \left\{
   \begin{array}{ccc}
   \Delta_r\left( e^{\half r H} \right) & = & e^{\half r H} \otimes e^{\half r H} \ ,\\
   \Delta_r\left( F \right) & = & F \otimes e^{ \half r H} + e^{-\half r H} \otimes F \ , \\
   \Delta_r\left( E \right) & = & E \otimes e^{ \half r H} + e^{-\half r H} \otimes E \ . 
   \end{array}
   \right.
$$
\end{lemma}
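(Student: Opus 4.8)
The plan is to unwind the general definition \eqref{def:coproduct} of the coproduct, which here reads $\Delta_r(f)(g_1,g_2) = f(g_1 g_2)$ for $g_1,g_2 \in (SL_2^*)_r$, and then simply read the coordinates of the product $g_1 g_2$ off the matrix description \eqref{def:functionsHEF}. Since $\Delta_r$ is by definition a morphism of algebras, it is enough to determine it on the three algebra generators $e^{\half rH}$, $E$ and $F$ of $\C[(SL_2^*)_r]$, so no completion or analytic subtlety is involved.

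Concretely, I would fix $g_1,g_2 \in (SL_2^*)_r$ and abbreviate $(H_i,E_i,F_i) := (H(g_i),E(g_i),F(g_i))$ for $i=1,2$. Recalling that, under the embedding \eqref{def:functionsHEF} into $B\times B^+$, the group law of $(SL_2^*)_r$ is the pairwise matrix multiplication, I multiply the lower-triangular components and, separately, the upper-triangular components:
$$
\begin{pmatrix} e^{\half rH_1} & 0 \\ 2rF_1 & e^{-\half rH_1}\end{pmatrix}
\begin{pmatrix} e^{\half rH_2} & 0 \\ 2rF_2 & e^{-\half rH_2}\end{pmatrix}
=
\begin{pmatrix} e^{\half r(H_1+H_2)} & 0 \\ 2r\left(F_1 e^{\half rH_2} + e^{-\half rH_1}F_2\right) & e^{-\half r(H_1+H_2)}\end{pmatrix},
$$
and symmetrically the $B^+$-components multiply to the matrix with diagonal $\left(e^{-\half r(H_1+H_2)}, e^{\half r(H_1+H_2)}\right)$ and upper-right entry $2r\left(E_1 e^{\half rH_2} + e^{-\half rH_1}E_2\right)$. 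The factor $2r$ in \eqref{def:functionsHEF} is precisely what makes the off-diagonal entries recombine without any spurious constant, since it factors out cleanly. In particular the two blocks have reciprocal diagonals, so the product indeed lies in $(SL_2^*)_r$.

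Reading off the coordinates of $g_1 g_2$ from these two matrices then gives $e^{\half rH(g_1g_2)} = e^{\half rH_1}e^{\half rH_2}$, $F(g_1g_2) = F_1 e^{\half rH_2} + e^{-\half rH_1}F_2$, and $E(g_1g_2) = E_1 e^{\half rH_2} + e^{-\half rH_1}E_2$. Rewriting each identity in Sweedler/tensor notation — a function $(g_1,g_2)\mapsto \varphi(g_1)\psi(g_2)$ being written $\varphi\otimes\psi$ — yields exactly the three displayed formulas for $\Delta_r$ on $e^{\half rH}$, $F$ and $E$, which determines $\Delta_r$ on all of $\C[(SL_2^*)_r]$ by multiplicativity.

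There is essentially no obstacle: the statement is a direct $2\times 2$ matrix computation. The only point requiring a little care is the bookkeeping — tracking the factors of $2r$ and checking that the $H$-coordinate extracted from the $B$-part coincides with the one extracted from the $B^+$-part (it does, both equal $H_1+H_2$) — but this is routine, and it also provides the consistency check that the product stays inside $(SL_2^*)_r$.
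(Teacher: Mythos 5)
Your proposal is correct and follows essentially the same route as the paper: write $g_1,g_2$ in the coordinates of \eqref{def:functionsHEF}, multiply the $B$- and $B^+$-components, and read off $H(g_1g_2)$, $E(g_1g_2)$, $F(g_1g_2)$ to get the coproduct on generators. Your extra remarks on the cancellation of the $2r$ factor and on the consistency of the $H$-coordinate between the two blocks are harmless additions to the same computation.
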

\begin{proof}
Writing two elements $g_i \in \left( SL_2^* \right)_r, \ i = 1,2$ as in \eqref{def:functionsHEF}, we obtain six complex numbers $H_i = H(g_i)$, $E_i = E(g_i)$ and $F_i = F(g_i)$ for $i=1,2$. By computing the matrix product, we obtain that $g_1 g_2 = (b, b^+)$ with:
\begin{align*}
 b = & \begin{pmatrix}
   e^{\half r (H_1 + H_2)} & 0 \\
   2r \left( F_1 e^{\half r H_2} + e^{-\half r H_1} F_2 \right) & e^{-\half r (H_1 + H_2)}
   \end{pmatrix} \ ,
   \\
 b^+ = &
   \begin{pmatrix}
   e^{-\half r (H_1 + H_2)} & 2r \left( E_1 e^{\half r H_2} + e^{-\half r H_1} E_2 \right) \\
   0                        & e^{\half r (H_1 + H_2)}    
   \end{pmatrix}
 \ .
\end{align*}
Then using the definition of $\Delta_r$ in \eqref{def:coproduct}, we extract from the above expressions exactly the announced expressions for the coproduct.
\end{proof}

\begin{rmk}[The coproduct of $\Uc_q^\hbar(\slfrak_2)$ is not quantum]
\label{rmk:DeltaNotQuantum}
By comparing to Eq. \eqref{def:Delta_r}, the reader will recognize the coproduct $\Delta_r$ of the quantum groups $\Uc_q^\hbar(\slfrak_2)$ upon setting $K=e^{- r H}$. Already we see there is nothing quantum about that!
\end{rmk}

Now, by using the same trick as in the definition of the KKS structure, we identify $X_\gfrak \in \slfrak_2 = \sufrak_2 \otimes \C$ with a complex linear forms $f_X$ on $\sufrak_2^* \approx (SU_2^*)_r$. It is an easy computation to check the following.

\begin{lemma}
\label{lemma:PoissonBracket-r}
There exists a legitimate Poisson bracket on $\C\left[ (SU_2^*)_r \right]$ defined by:
\begin{align*}
\left\{ H, E \right\}_r & := 2 E \ ,  \\
 \left\{ H, F \right\}_r & := -2 F \ ,  \\
 \left\{ E, F \right\}_r & := \frac{e^{r H}-e^{-r H}}{2r} \ . 
\end{align*}
\end{lemma}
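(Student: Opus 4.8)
The plan is to verify directly that the bracket $\{\cdot,\cdot\}_r$ defined on the generators $H, E, F$ extends to a genuine Poisson bracket on the polynomial algebra $\C\left[ (SU_2^*)_r \right]$. Since a Poisson bracket on a polynomial algebra is determined by its values on generators once we impose bilinearity, antisymmetry, the Leibniz rule in each argument, and the Jacobi identity, the task reduces to two points: first, that the prescription is consistent with the relation $e^{\half r H} e^{-\half r H} = 1$ among the generators (so that the Leibniz extension is well-defined on $\C\left[ e^{\half rH}, e^{-\half rH}, E, F\right]$), and second, that the Jacobi identity holds on the triple $(H,E,F)$.

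First I would treat the consistency with the inverse-pair relation. Writing $u := e^{\half r H}$, one needs $\{u, u^{-1}\}$ to be forced to zero by Leibniz, which is automatic ($0 = \{u \cdot u^{-1}, \cdot\} = u^{-1}\{u,\cdot\} + u\{u^{-1},\cdot\}$ gives $\{u^{-1}, \cdot\} = -u^{-2}\{u,\cdot\}$ consistently), and more substantively one should record the brackets of $u^{\pm 1}$ with $E$ and $F$. From $\{H,E\}_r = 2E$ and Leibniz applied to $H = \frac{2}{r}\log u$ — or more cleanly by declaring $\{u, E\}_r := r\, u E$ (the value that $\frac{r}{2} u \{H,E\}_r$ would give) — one gets $\{e^{\half r H}, E\}_r = r\, e^{\half rH} E$ and similarly $\{e^{\half r H}, F\}_r = -r\, e^{\half rH}F$; this is precisely the infinitesimal shadow of the coproduct grading and is consistent with $\{H,E\}_r=2E$ upon differentiating. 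The cleanest formulation is to define the bracket on the four actual generators $e^{\pm\half rH}, E, F$ and note it descends from a derivation of the full Laurent-type algebra; the relation $\{E,F\}_r = \frac{e^{rH}-e^{-rH}}{2r}$ is then a polynomial in $e^{\pm\half rH}$, hence legitimately an element of $\C\left[ (SU_2^*)_r \right]$.

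The only real content is the Jacobi identity on $(H,E,F)$, which I would check by a direct computation:
\begin{align*}
\{H, \{E,F\}_r\}_r + \{E, \{F,H\}_r\}_r + \{F, \{H,E\}_r\}_r
&= \left\{ H, \frac{e^{rH}-e^{-rH}}{2r} \right\}_r + \{E, 2F\}_r + \{F, -2E\}_r \\
&= 0 + 2\{E,F\}_r - 2\{F,E\}_r - 2\cdot 2\{E,F\}_r \cdot \tfrac12 \cdots
\end{align*}
— more carefully, $\{H, e^{\pm rH}\}_r = 0$ since $H$ Poisson-commutes with any function of $H$, so the first term vanishes; and $2\{E,F\}_r + 2\{F,E\}_r$... wait, $\{E,2F\}_r + \{F,-2E\}_r = 2\{E,F\}_r - 2\{F,E\}_r = 4\{E,F\}_r$, which does not obviously vanish. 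The resolution is that one must instead verify Jacobi on the triple involving $e^{\half rH}$ rather than $H$: with $c := \{E,F\}_r$, one checks $\{e^{\half rH},\{E,F\}_r\}_r + \{E,\{F,e^{\half rH}\}_r\}_r + \{F,\{e^{\half rH},E\}_r\}_r$, using $\{e^{\half rH}, E\}_r = re^{\half rH}E$, $\{e^{\half rH},F\}_r = -re^{\half rH}F$, and $\{e^{\half rH}, c\}_r = \{e^{\half rH}, \frac{e^{rH}-e^{-rH}}{2r}\}_r = 0$ (function of $H$ alone); expanding via Leibniz gives $0 + \{E, -r e^{\half rH}F\}_r + \{F, re^{\half rH}E\}_r = -r\big(e^{\half rH}\{E,F\}_r + F\{E,e^{\half rH}\}_r\big) + r\big(e^{\half rH}\{F,E\}_r + E\{F,e^{\half rH}\}_r\big) = -r e^{\half rH}c - rF(-re^{\half rH}E) + r e^{\half rH}(-c) + rE(re^{\half rH}F) = -2re^{\half rH}c + r^2 e^{\half rH}(EF - FE)$... and since $EF - FE$ is not central here, one must be careful. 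The main obstacle, then, is precisely this bookkeeping: organizing the Jacobi verification so that the terms cancel, which they do because $\{E,F\}_r$ was deliberately chosen as $\frac{e^{rH}-e^{-rH}}{2r}$ — the unique (up to the relevant normalization) function of $H$ making the Poisson-Lie / Jacobi compatibility hold, exactly mirroring the associativity of the quantum relation \eqref{eq:Uqh_relations}. I would present this as a short explicit check, remarking that it is the semiclassical ($\hbar\to 0$) shadow of the associativity already guaranteed for $\Uc_q^\hbar(\slfrak_2)$, and that antisymmetry and Leibniz are built into the construction by fiat.
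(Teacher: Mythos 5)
Your plan is the right one — the paper's proof is precisely a direct Jacobi check on $(H,E,F)$ — but your execution contains a sign error that sends you off the rails. In the expansion
\[
\{H,\{E,F\}_r\}_r + \{E,\{F,H\}_r\}_r + \{F,\{H,E\}_r\}_r,
\]
the third term is $\{F, \{H,E\}_r\}_r = \{F, 2E\}_r$, \emph{not} $\{F, -2E\}_r$. With the correct sign, the second and third terms become $2\{E,F\}_r + 2\{F,E\}_r$, which vanishes instantly by antisymmetry, leaving only $\bigl\{H, \tfrac{e^{rH}-e^{-rH}}{2r}\bigr\}_r = 0$ because any derivation applied to a function of $H$ alone, bracketed against $H$, kills it. That is the paper's entire proof. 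Your conclusion that you get $4\{E,F\}_r$ "which does not obviously vanish" is an artifact of the flipped sign, and it causes you to abandon a computation that was one step from done.

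The detour you then take through the triple $(e^{\half rH}, E, F)$ compounds the problem: you write $\{F, e^{\half rH}\}_r$ with the wrong sign again (it equals $+re^{\half rH}F$, since $\{e^{\half rH},F\}_r = -re^{\half rH}F$ must be negated), and your intermediate expression $r^2 e^{\half rH}(EF - FE)$ should just be zero because $\C[(SU_2^*)_r]$ is a \emph{commutative} polynomial algebra — $E$ and $F$ commute as coordinate functions, and the worry that "$EF - FE$ is not central" imports non-commutativity from $\Uc_q^\hbar(\slfrak_2)$ where it does not belong. Finally, your remark that $\tfrac{e^{rH}-e^{-rH}}{2r}$ is "the unique function of $H$ making Jacobi hold" is false: in the rank-one case the Jacobi check on $(H,E,F)$ is satisfied by \emph{any} choice $\{E,F\}_r = g(H)$, since the $E$- and $F$-terms cancel by antisymmetry regardless, and $\{H,g(H)\}_r = 0$ always. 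The specific choice of $g$ is dictated by compatibility with the Poisson–Lie structure and the coproduct, not by Jacobi. The appeal to the "semiclassical shadow of associativity" of $\Uc_q^\hbar(\slfrak_2)$ is a reasonable heuristic but does not substitute for the two-line direct check, which is what the paper does.
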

\begin{proof}
A Poisson bracket is determined by its value on linear forms because of the Leibniz rule. Thus we only have to check the Jacobi identity:
$$ 0 = \{A,\{B,C\}_r\}_r
     + \{B,\{C,A\}_r\}_r
     + \{C,\{A,B\}_r\}_r
$$
for all elements $A, B, C$ that are linear forms. The Jacobi identity can be checked on a basis. Also, because $\{\cdot, \cdot\}_r$ is anti-symmetric, there is no need to check all the possibilities, only $\left( A, B, C \right) = \left( H, E, F \right)$ i.e.
\begin{align*}
0 & \overset{?}{=} \left\{H, \frac{e^{rH}-e^{-rH}}{2r} \right\}_r
     + \{E, 2 F\}_r
     + \{F, 2 E \}_r\\
  & = \left\{ H, \frac{e^{r H}-e^{-rH}}{2r} \right\}_r \ .
\end{align*}
This last term is indeed zero, as the Poisson bracket is a derivation in each variable.
\end{proof}

Notice that the space $\sufrak_2^*$ has no reason of having a Lie bracket, hence the assignment of a trivial Lie bracket on $\sufrak_2^*$. It is natural to interpret a trivial Lie bracket as zero curvature, since that for Lie groups equipped with an invariant metric, the curvature tensor is basically equivalent to the bracket. We will explicit this idea later in this section. Adding to that the KKS Poisson bracket $\{ \cdot, \cdot \}_{0}$, which is nothing but the Lie bracket of $\sufrak_2$, one says that (\cite{CP95})
$$ \left( \sufrak_2^*, 0 = [ \cdot, \cdot ]_{\sufrak_2^*}, \{ \cdot, \cdot \}_{0} \right) $$
is a Lie bialgebra. 

Here, we have described a curved version of this statement, i.e. that we have a bialgebra structure:
$$ \left( \sufrak_2^*, r [ \cdot, \cdot ], \{ \cdot, \cdot \}_{r} \right) $$
where $r$ plays the role of curvature, dilating the Lie bracket on $\slfrak_2^*$, and $\{ \cdot, \cdot \}_{r}$ has to be a compatible Poisson bracket.

%
%
%

\subsection{The real form \texorpdfstring{$\left( SU_2^* \right)_r$}{} and Poisson-Hopf duality}
\label{subsec:SUdagger}

We write 
$$ \slfrak_2^* = \Span_\C\left( H^*, E^*, F^* \right)$$
by making the choice of basis:
$$ H^* =  \left( \begin{pmatrix} -\frac18 & 0 \\  0 & \frac18 \end{pmatrix} , \begin{pmatrix} \frac18 & 0 \\  0 & -\frac18 \end{pmatrix} \right)  ; \
   E^* =  \left( \begin{pmatrix} 0 & 0 \\  0 &  0 \end{pmatrix} , \begin{pmatrix} 0 & 1 \\  0 &  0 \end{pmatrix} \right)  ; \
   F^* =  \left( \begin{pmatrix} 0 & 0 \\  1 &  0 \end{pmatrix} , \begin{pmatrix} 0 & 0 \\  0 &  0 \end{pmatrix} \right)  .
$$
This choice of basis yields the same presentation of $\slfrak_2^*$ as in \cite{K97}.

On $\slfrak_2$, we write $\dagger$ for the (standard) conjugate transpose. It is the involutive anti-morphism which determines the compact form $\sufrak_2 \subset \slfrak_2$ at the level of Lie algebras, and the compact form $SU_2 \subset SL_2(\C)$ at the level of Lie groups:
$$ SU_2 := \left\{ x \in SL_2(\C) \ | \ x x^\dagger = \id \right\} \ .$$
Because of the standard duality between $\slfrak_2$ and $\slfrak_2^*$ (see \cite{CP95} and \cite{K97}), so that:
$$ \langle H^*, H \rangle = \langle E^*, E \rangle = \langle F^*, F \rangle = 1 $$
while all other duality brackets vanish, we can transport $\dagger$ to the Lie algebra $\slfrak_2^* = \Span_\C\left( H^*, E^*, F^* \right)$. This is done via the following relation which is compatible with Hopf algebra duality (see \cite[p.117]{CP95}):
$$ \forall f \in \slfrak_2^*, \ 
   \forall X \in \slfrak_2, \ 
   \langle f^\dagger, X \rangle := \overline{ \langle f, S(X)^\dagger \rangle } \ .$$
Here $S: \slfrak_2 \rightarrow \slfrak_2$ is the antipode map obtained by degenerating the definition \eqref{eq:Uq_S} with $\hbar \rightarrow 0$. All in all, we obtain an anti-linear involution on $\slfrak_2^*$:
$$ 
   \left( H^* \right)^\dagger = - H^*, \ 
   \left( E^* \right)^\dagger = - F^*, \ 
   \left( F^* \right)^\dagger = - E^*.
$$

Upon exponentiating, we have an antimorphism $\dagger$ acting on points of $\left(SL_2^*\right)_r$ as follows. If $H$, $F$, $E$ are complex scalars such that:
$$ x = \left(
   \begin{pmatrix}
   e^{\half r H} & 0 \\
   2r F          & e^{-\half r H}
   \end{pmatrix},
   \begin{pmatrix}
   e^{-\half r H} & 2r E \\
   0              & e^{\half r H}    
   \end{pmatrix}
   \right) \ ,$$
exactly as in Eq. \eqref{def:functionsHEF}, then we have:
$$ x^\dagger = \left(
   \begin{pmatrix}
   e^{-\half r \overline{H} }   & 0 \\
   -2r \overline{E}             & e^{\half r \overline{H}}
   \end{pmatrix},
   \begin{pmatrix}
   e^{\half r \overline{H} }  & -2r \overline{F} \\
   0                          & e^{-\half r \overline{H} }    
   \end{pmatrix}
   \right) \ .$$
As such, we obtain a presentation of the subgroup $\left( SU_2^* \right)_r$ which is more natural than \eqref{def:SU2star} via:
$$
   \left( SU_2^* \right)_r = \left\{ x \in \left( SL_2^* \right)_r \ | \ x x^\dagger = \id \right\} \approx NA \ .
$$
Indeed, $x x^\dagger = \id$ yields that $H-\overline{H}=0$ i.e. $H$ is real and $E=\overline{F}$. Therefore, all the information is in the lower Borel subgroup with positive diagonals. Going back to the Poisson-Lie dual, $\sufrak_2^*$ is thus naturally identified with lower triangular matrices and $\left( SU_2^* \right)_r \approx NA$. The subscript $r$ indicates that we renormalize the group law thanks to the parameter $r$.

\subsection{The constant \texorpdfstring{$r$}{r} is a curvature parameter}
\label{subsection:rIsCurvature} 
Following \cite[$\mathsection 3.17$]{GHL90}, if $G$ is a Lie group with invariant metric, then the $(1,3)$-curvature tensor is
$$ R(X,Y,Z) = \frac{1}{4} [X, [Y, Z]] \ ,$$
where $X$, $Y$ and $Z$ represent invariant vector fields. In our case, the parameter $r > 0$ controls the curvature of $\H^3 \approx \left(SU_2^*\right)_r$ via:
$$ R(X,Y,Z) = \frac{r^2}{4} [X, [Y, Z]_{\slfrak_2^*} ]_{\slfrak_2^*} \ .$$

The sectional curvature along any plane $P = \text{Span}_\R(X, Y) \subset \sufrak_2^* \approx \afrak \oplus \nfrak$, spanned by orthonormal vectors $X$ and $Y$, is by definition $ K(P) := \langle R(X,Y,X), Y \rangle \ ,$ where the natural scalar product is given by the Killing form. By invariance of the Killing form,
$ K(P) = -\frac{r^2}{4} \| [Y, X]_{\slfrak_2^*} \|^2$. This expression is again invariant under conjugation by $SU_2$ or more accurately via the coadjoint action of $SU_2$ on $\sufrak_2^* \approx \afrak \oplus \nfrak$. As a consequence, there is no loss of generality in assuming $Y = \frac{H}{\|H\|} = \frac{H}{\sqrt{2}} \in \afrak \oplus \nfrak$. Any orthogonal vector must be of the form $X = e^{i\theta} \frac{F}{\|F\|} = e^{i\theta} F$, which yields $\| [Y, X]_{\slfrak_2^*} \| = \| \frac{1}{\sqrt{2}} [H, F]_{\slfrak_2^*} \| = \sqrt{2}$.

In the end, the sectional curvature of $\left( SU_2^* \right)_r \approx SL_2(\C)/SU_2 = \H^3$ is indeed constant and equal to $K(P) = -\half r^2$, which goes to $-\infty$ as $r \rightarrow \infty$.

\subsection{Deformation and quantization}

Let  $\C[[z]]$ denote the ring  of formal power series  in the indeterminate $z$. 
Recall from \cite[Definition~6.1.1 and Definition~6.2.4]{CP95}:

\begin{definition}
\label{def:deformation}
A Hopf deformation over $\C[[z]]$ of a Hopf algebra $\Ac$  with product $\mu$ and coproduct $\Delta$,  is a Hopf algebra $\Ac_z$ with product $\mu_z$ and coproduct $\Delta_z$ such that
\begin{enumerate}
\item[(i) ] $\Ac_z \approx \Ac[[z]]$  as $\C[[z]]$-module,
\item[(ii)] $\mu_z \equiv \mu \ \modulo z$,  and  $ \Delta_z \equiv \Delta \ \modulo z$ .
\end{enumerate}
The notation $\Ac[[z]]$ means the formal power series in $z$ with coefficient in $\Ac$. 

Moreover, if $\Ac$ has the additional structure of a Poisson algebra, one says  that $\Ac_z$ is a quantization of $\Ac$ if $\Ac_z$ is a Hopf deformation of $\Ac$ and
$$
\frac{\mu_z(a,b)-\mu_z(b,a)}{z} \equiv \{\pi_z(a),\pi_z(b)\} \ \modulo z \ ,
$$
where $\{\cdot,\cdot\}$ is the Poisson bracket on $\Ac$ and $\pi_z: \Ac_z \rightarrow \Ac_z /z \Ac_z$ is the quotient map $\modulo z$.
\end{definition}

Notice that as a consequence of $(i)$, we have a linear isomorphism $\Ac \approx \Ac_z /z \Ac_z$. We can now turn the formal diagram of Figure~\ref{fig:commutativeDiag} into a commutative diagram which expresses a deformation (via curvature $r$) and a quantization (via the Planck constant $\hbar$).

\begin{thm}[A commutative diagram for spaces]
\label{thm:commutativeDiagram4Spaces}
Let us denote by $\mu^\hbar_q$ and $\mu^\hbar$  the products on $\Uc^\hbar_q (\slfrak_2)$ and $\Uc^\hbar (\slfrak_2)$ respectively. The following diagram between  Hopf algebras commutes:
$$
\begin{tikzcd}
\left( \,  \left( \Uc_{q}^\hbar(\slfrak_2), \dagger \right), \ \mu^\hbar_q, \ \Delta_r \, \right)
                         \arrow{d}{\modulo r}
                         \arrow{r}{\modulo \hbar} &
\left( \C\left[ (SU_2^*)_r \right] ,  \ \{ \cdot, \cdot \}_r, \ \Delta_r \right)
    \arrow{d}{\modulo r}\\
\left( \,  \left( \Uc^\hbar(\slfrak_2), \dagger \right), \ \mu^\hbar, \ \Delta_0 \, \right)
    \arrow{r}{\modulo \hbar} & 
\left( \C\left[ \sufrak_2^* \right], \ \{ \cdot, \cdot \}_0, \ \Delta_0 \right) 
\end{tikzcd}
$$
Vertical arrows are curvature deformations, while horizontal arrows are quantizations of Poisson algebra.
\end{thm}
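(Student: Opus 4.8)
The plan is to verify the four arrows and the commutativity separately, treating the two horizontal ``quantization'' arrows and the two vertical ``curvature deformation'' arrows as instances of Definition~\ref{def:deformation} with the appropriate formal parameter ($z = \hbar$ horizontally, $z = r$ vertically), and then checking that the two composite maps $\Uc_q^\hbar(\slfrak_2) \to \C[\sufrak_2^*]$ agree on generators. The bottom horizontal arrow is the classical fact that $\Uc^\hbar(\slfrak_2)$ is a quantization of $(\C[\sufrak_2^*], \{\cdot,\cdot\}_0)$: the reduction $\modulo \hbar$ sends $E,F,H$ to the commuting coordinate functions, $\Delta_0$ is unchanged, and the defining relation $x\otimes y - y \otimes x = \hbar[x,y]$ gives exactly $\frac{1}{\hbar}(\mu^\hbar(a,b)-\mu^\hbar(b,a)) \equiv \{\pi(a),\pi(b)\}_0 \modulo \hbar$ by the KKS definition~\eqref{def:kks}. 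This is essentially the universal enveloping algebra baby case of the orbit method and can be dispatched quickly.

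Next I would treat the top horizontal arrow. Reducing the relations \eqref{eq:Uqh_relations} $\modulo \hbar$: from $K^\half E K^{-\half} = q^\hbar E$ we get $K^\half E K^{-\half} = E$ at $\hbar=0$ (since $q^\hbar \to 1$), i.e.\ $E$ commutes with $e^{\pm\half rH}$, and similarly for $F$; and $EF - FE = \hbar \frac{e^{rH}-e^{-rH}}{2r} \to 0$. So $\Uc_q^\hbar(\slfrak_2)/\hbar \, \Uc_q^\hbar(\slfrak_2)$ is the commutative algebra generated by $e^{\pm \half r H}, E, F$, which is precisely $\C[(SU_2^*)_r]$ by \eqref{def:coordinateAlgebra}; the coproduct $\Delta_r$ is literally the same formula on both sides by Lemma~\ref{lemma:coproduct_r} versus \eqref{def:Delta_r}, and the involution $\dagger$ matches \eqref{def:quantum_dagger} with the point-wise $\dagger$ on $(SU_2^*)_r$ described in subsection~\ref{subsec:SUdagger}. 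For the quantization condition one computes, to first order in $\hbar$,
$$ \frac{1}{\hbar}\left( K^\half E K^{-\half} - E \right) \equiv \frac{1}{\hbar}(q^\hbar - 1) E \equiv -r\, HE \pmod{\hbar}, $$
wait --- more carefully, one checks that the commutator of the images of $H$ (equivalently of $e^{\half rH}$) and $E$, divided by $\hbar$, reduces to $2E = \{H,E\}_r$ of Lemma~\ref{lemma:PoissonBracket-r}, and that $\frac{1}{\hbar}(EF-FE) = \frac{e^{rH}-e^{-rH}}{2r} = \{E,F\}_r$ exactly; these are the only brackets to check since a Poisson bracket is determined on generators by Leibniz. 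The isomorphism $\Phi$ of \eqref{def:isom} between $\Uc_{q^\hbar}(\slfrak_2)$ and $\Uc_q^\hbar(\slfrak_2)$, already asserted in the excerpt, guarantees $\Uc_q^\hbar(\slfrak_2)$ is a genuine Hopf algebra flat over $\hbar$, so hypothesis (i) of Definition~\ref{def:deformation} holds.

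Then the two vertical arrows: the right one sends $e^{\pm\half rH} \mapsto 1 \pm \half rH + O(r^2)$, $E\mapsto E$, $F\mapsto F$, so $\modulo r$ one gets the polynomial algebra $\C[\sufrak_2^*] = \C[H,E,F]$; the coproduct $\Delta_r \to \Delta_0$ since $e^{\pm\half rH}\otimes e^{\pm\half rH} \equiv 1\otimes 1$ and $F\otimes e^{\half rH} + e^{-\half rH}\otimes F \equiv F\otimes 1 + 1\otimes F \modulo r$; and the Poisson bracket $\{E,F\}_r = \frac{e^{rH}-e^{-rH}}{2r} \to H = \{E,F\}_0$ while $\{H,E\}_r=2E$, $\{H,F\}_r=-2F$ are $r$-independent --- matching the KKS bracket \eqref{def:kks} of $\slfrak_2$. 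The left vertical arrow is the analogous statement at the level of enveloping algebras: $\Uc_q^\hbar(\slfrak_2)$ with $K=e^{-rH}$ degenerates as $r\to 0$ to $\Uc^\hbar(\slfrak_2)$ with the trivial coproduct $\Delta_0$, since the relations \eqref{eq:Uqh_relations} become $[H,E]=2\hbar E \cdot \hbar^{-1}\cdots$ --- concretely, expanding $q^{\pm\hbar}=e^{\mp r\hbar}=1\mp r\hbar + O(r^2)$ in the first two relations and $\frac{e^{rH}-e^{-rH}}{2r}= H + O(r^2)$ in the third recovers exactly the defining relations of $\Uc^\hbar(\slfrak_2)$. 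Flatness over $r$ again follows from the explicit triangular (PBW) basis of the Jimbo--Drinfeld quantum group transported through $\Phi$. Finally, commutativity of the square: both composites $\Uc_q^\hbar(\slfrak_2) \to \C[\sufrak_2^*]$ send $E\mapsto E$, $F\mapsto F$, $e^{\pm\half rH}\mapsto 1$ (equivalently $H\mapsto H$), and send $\Delta_r \mapsto \Delta_0$, $\mu^\hbar_q \mapsto$ the commutative product, so they agree on generators, hence everywhere.

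\textbf{Main obstacle.} The routine part is the Taylor expansions; the one genuinely delicate point is condition (i) of Definition~\ref{def:deformation} --- that each deformation is \emph{flat}, i.e.\ free as a module over $\C[[z]]$ with the correct reduction --- and the accompanying subtlety that $K^{\pm\half} = e^{\mp\half rH}$ and the Casimir-related element $\Lambda^{r,\hbar}$ live in a completion, so one must be a little careful about what ``polynomial algebra'' means and about whether $\modulo r$ and $\modulo \hbar$ interact well with that completion. I expect to handle this by invoking the PBW-type basis of $\Uc_{q^\hbar}(\slfrak_2)$ (ordered monomials in $E$, $F$, $K^{\pm\half}$) and transporting it via the isomorphism $\Phi$ of \eqref{def:isom}, which makes flatness in both $\hbar$ and $r$ manifest, the only caveat being to phrase everything on the $\C[e^{\pm\half rH},E,F]$ level where the $\modulo r$ and $\modulo \hbar$ reductions are unambiguous.
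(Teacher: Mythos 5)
Your proposal follows essentially the same route as the paper: use PBW bases for flatness (condition~(i) of Definition~\ref{def:deformation}), reduce the defining relations modulo $\hbar$ and modulo $r$ to identify the four algebras and verify compatibility of products and coproducts, and check the quantization condition on generators against Lemma~\ref{lemma:PoissonBracket-r}. The one substantive move the paper makes that you reach for somewhat obliquely is to rewrite the relations \eqref{eq:Uqh_relations} directly in terms of $H,E,F$ (the paper's Eq.~\eqref{eq:Uqh_relations2}: $[H,E]=2\hbar E$, $[H,F]=-2\hbar F$, $EF-FE=\hbar\frac{e^{rH}-e^{-rH}}{2r}$); this cleanly eliminates your momentary detour through $\frac{1}{\hbar}(K^{\half}EK^{-\half}-E)$, which you correctly abandon. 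The paper also handles flatness a bit more explicitly than ``transport via $\Phi$'': it invokes the quantum PBW theorem for $\Uc_q^\hbar(\slfrak_2)$ directly (ordered monomials $K^aE^bF^c$) and then observes that expanding $K^{\pm1}=e^{\pm rH}$ gives a triangular change of basis with coefficients in $\C[r]$ to the classical PBW monomials $H^aE^bF^c$, yielding $\Uc_q^\hbar(\slfrak_2)\approx\C[H,E,F][[r]][[\hbar]]$ and thus flatness in both parameters simultaneously, which is exactly the point you flag as the ``main obstacle.'' In short, your plan is correct and essentially the paper's proof; the paper's presentation just front-loads the reformulation \eqref{eq:Uqh_relations2} and the triangularity argument, both of which you would need to carry out anyway.
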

As mentioned before, this theorem can historically be attributed to Drinfeld since it is implicitly present in his ICM foundational paper \cite{D88}. It appears more explicitly and in a very general form in the paper of Kassel-Turaev \cite{KT00}. A more detailed discussion is given in the next subsection. For the sake of completeness, let us give a condensed proof while interpreting its ingredients.

\begin{proof}
For the purposes of this proof, all algebras are completed with respect to the $\hbar$-adic and $r$-adic topology. If an algebra $\Ac_z$ uses the parameter $z \in \{r, \hbar\}$, then $\Ac$ is seen  as a $\C[[z]]$-module. For example, $\Uc_q^\hbar\left( \slfrak_2 \right)$ uses both parameters ($q = e^{-r}$).

We start by checking (i) in Definition \ref{def:deformation}. The argument boils down to Poincar\'e-Birkhoff-Witt theorems (PBW) \cite{SW15} thanks to which all four algebras are free modules with essentially the same basis. Indeed, by the classical PBW theorem, the ordered monomials
$$
H^aE^bF^c, \quad \text{ for } a,b,c \geq 0,
$$
form a (topological) basis of $\Uc^\hbar (\slfrak_2)$. As such, any element of $\Uc^\hbar (\slfrak_2)$ can be written as a sum of monomials $H^aE^bF^c$ with coefficient in $\C[[\hbar]]$, giving the isomorphism between $\C[[\hbar]]$-modules:
$$
\Uc^\hbar (\slfrak_2) \approx \C [H,E,F] [[\hbar]] = \C[ \sufrak_2^* ][[\hbar]] \ . 
$$
Note that the discrepancy between $\slfrak_2$ and $\sufrak_2$ is due to the fact that we consider the above linear isomorphism as a linear isomorphism between involutive algebras: it is the compact form $\left( \Uc^\hbar (\slfrak_2) , \dagger \right)$ which deforms the Poisson algebra $\C[\sufrak_2^*]$, viewed as the coordinate  algebra $\C[\slfrak_2^*]\approx \C[H,E,F]$ equipped with the $\dagger$-involution.
In the case of $\Uc_q^\hbar (\slfrak_2)$, one invokes the quantum PBW theorem, which states that the ordered monomials
$$
K^a E^b F^c, \text{ for $a\in\Z$, $b,c\in \N$}
$$
form a (topological) basis of $\Uc^\hbar_q (\slfrak_2)$, giving the isomorphism between $\C[[\hbar]]$-modules:
$$
\Uc^\hbar_q (\slfrak_2) \approx \C[K,E,F] [[\hbar]] = \C[ \left(SU_2^*\right)_r ][[\hbar]] \ .
$$
As for the flat case $r=0$, the above equation shows a discrepancy between $\slfrak_2$ and $\left(SU_2^*\right)_r$. The compact form $\left( \Uc^\hbar_q (\slfrak_2), \dagger \right)$ will be a Hopf deformation of the Poisson algebra $\C[(SU_2^*)_r]$, viewed as the real form of the Poisson algebra $\C[(SL_2^*)_r]$ equipped with the $\dagger$-involution. Recall that subsection~\ref{subsec:SUdagger} explains how $\dagger$ is transported from the algebra of functions to points and thus defines $(SU_2^*)_r \subset (SL_2^*)_r$.

Now notice that, since $K^{\pm 1} = e^{\pm rH}$, one realizes that by expanding the exponential into formal power series, the two previous bases are related by a triangular linear transformation with coefficients in $\C[r]$. As such:
\begin{align*}
   \Uc_q^\hbar\left( \slfrak_2 \right) &
   \approx
   \C[H, E, F][[r]][[\hbar]]
   \approx
   \Uc^\hbar\left( \slfrak_2 \right)[[r]]  \\ 
\intertext{and}   
   \C[ \left(SU_2^*\right)_r ] & 
   \approx
   \C[H, E, F][[r]]
   \approx
   \C[\sufrak_2^*][[r]] \ .
\end{align*}
This proves $(i)$ for all four arrows.

\medskip

In order to prove $(ii)$, coproducts are swiftly dealt with as follows. Remark \ref{rmk:DeltaNotQuantum} reminds us that coproducts are unchanged with the deformation in $\hbar$. For the deformation in $r$, $\Delta_r \equiv  \Delta_0 \ \modulo r$ is seen from the expression of the coproduct at the level of generators in Lemma \ref{lemma:coproduct_r}: $H$ remains primitive for all $r > 0$ and $e^{\half r H} \equiv 1 \ \modulo r$.

For products, we need to analyze the relations in the algebra $\Uc_q^\hbar\left( \slfrak_2 \right)$ given after Eq. \eqref{def:heteroQuantumGroup}. Again, viewing the element $K^{\pm 1}=e^{\pm rH}$ as the formal power series $\sum_{k\geq0} \frac{(\pm 1)^k r^k}{k!}H^k$, the Hopf algebra $\Uc^\hbar_q (\slfrak_2)$ can be defined as the algebra generated by the elements $H,E,F$ and relations
\begin{align}
\label{eq:Uqh_relations2}
&
[H,E]=2\hbar E, \quad [H,F]=-2\hbar F, \quad EF-FE=\hbar \frac{e^{rH}-e^{-rH}}{2r} \ .
\end{align}
Indeed, the first two relations implies easily the two relations $K^\half E K^{-\half} = e^{-r\hbar}E$  and $K^\half F K^{-\half} = e^{r\hbar} F$ of (\ref{eq:Uqh_relations}). Since the third relation yields:
$$ EF-FE=\hbar H \ \modulo r \ ,$$
we recover the relations of $\Uc^\hbar\left( \slfrak_2 \right)$ modulo $r$. The congruence $\mu^\hbar_q \equiv \mu^\hbar \ \modulo r$ follows. The commutative product of functions is obviously the same in $\C[\sufrak_2^*]$ and $\C[(SU_2^*)_r]$. For products modulo $\hbar$, Eq.~\eqref{eq:Uqh_relations2} yields that $\mu_q^\hbar$ (resp. $\mu^\hbar$) is congruent $\modulo \hbar$ to a commutative product.

We have thus proved (ii) for all four arrows, which are therefore deformations in the sense of Definition \ref{def:deformation}.

\medskip

Finally, let us show that the two horizontal arrows are also quantizations, by proving that commutators $\modulo \hbar^2$ encode Poisson brackets. It is well known that the Hopf algebra  $\Uc^\hbar (\slfrak_2)$ is a quantization of the Poisson algebra $\C[\sufrak_2^*]$ (see for instance the more general example \cite[Example~6.2.5]{CP95}). We only prove that  $\Uc^\hbar_q (\slfrak_2)$ is a quantization of $\C[(SU_2^*)_r]$ with an argument that is uniform in $r > 0$. 

Using the quotient map $\pi = \modulo \hbar$ and the Poisson bracket $\{\cdot,\cdot\}_r$ on $\C[(SU_2^*)_r]$, it remains to prove that, for any $a,b\in \Uc^\hbar_q (\slfrak_2) $,
$$
\frac{\mu^\hbar_q(a,b)-\mu^\hbar_q(b,a)}{\hbar} =  \{\pi(a),\pi(b)\}_r \ \modulo \hbar \ .
$$
Using commutators on the left-hand side, and because of the Leibniz rule for the Poisson bracket on the right-hand side, it suffices to prove the above on the generators of $\Uc^\hbar_q (\slfrak_2) $ only. The claim then follows from putting together Eq.~\eqref{eq:Uqh_relations2} and Lemma \ref{lemma:PoissonBracket-r}. Notice that the same argument carries upon taking $r \rightarrow 0$.
\end{proof}

\subsection{Further comments on two-parameter deformations}

Since $\slfrak_2$ is (semi-)simple, Drinfeld's rigidity theorem \cite[Theorem 6.1.8]{CP95} states that the deformation theory of $\Uc\left( \slfrak_2 \right)$ is trivial, in the sense that every deformation of $\Uc(\slfrak_2)$ using $\hbar$ is isomorphic to $\Uc(\slfrak_2)[[\hbar]]$ as an algebra. One could take a pessimistic stance on that theorem and decide that there is no point in trying to play with algebras such as the one-parameter deformation $\Uc_q(\slfrak_2)$. Nevertheless no explicit equivalence is known and the success of quantum groups has proven this intuition wrong. 

Even going beyond one-parameter deformations, there are quite a few papers which develop two-parameter deformations. We are only aware of the papers \cite{KT00} and \cite{BCO09}, which develop exactly the same two-parameter deformation but with a different point of view.

\subsubsection{Kassel and Turaev's biquantization}

In \cite{KT00}, Kassel and Turaev present a commutative diagram that is intriguingly similar to the formal diagram in Figure \ref{fig:commutativeDiag}, as it uses two parameters denoted $u$ and $v$. Furthermore, their result, termed as biquantization of Lie bialgebras, holds for all Lie algebras. 

The deformed law \eqref{eq:groupLaw} is considered in \cite[Eq. (2.6)]{KT00}, and defined formally from the Baker-Campbell-Hausdorff formula. The Baker-Campbell-Hausdorff formula converges in our case, since we are dealing with a solvable group. Dually, one obtains a coproduct $\Delta_r$ on the coordinate algebra of $\left( SL_2^* \right)_r$. Also, it is made into a Poisson algebra thanks to a Poisson bivector, which comes from the bracket of $\slfrak_2$ (see \cite[Eq. (2.9)]{KT00}).

Their deformations in the parameter $u$ are indeed quantization of Poisson bialgebras while the deformations in $v$ are co-quantizations of co-Poisson algebras. Their result also holds dually, replacing $\mathfrak g$ by its dual $\mathfrak g^*$, giving a  symmetric roles to the parameters $u$ and $v$, hence the term of biquantization. In the context of Question \ref{question:main}, we never invoke the co-quantization using the parameter $r > 0$. In our case the parameter $\hbar \geq 0$ controls the non-commutativity in the algebra of observables $\Ac^{r, \hbar}$, in accordance with the principles of quantum mechanics, while $r > 0$ controls the non-commutativity of invariant vector fields, in the underlying space $\left( SU_2^* \right)_r$ and thus controls the curvature.

\subsubsection{Ballesteros, Celeghini and Del Olmo's point of view} In a very insightful comment, the authors of \cite{BCO09} argue that quantum groups and quantum algebras need to be viewed as abstract Hopf algebras since that, in many physical cases, the parameter $q$ is completely independent from the truly quantum $\hbar$ constant. An example they give, among others, is that $q$ clearly plays the role of anisotropy parameter in the context of the Heisenberg XXZ spin chain. Commutative diagrams similar to Fig.~\ref{fig:commutativeDiag} do appear in their paper.

\medskip

Let us conclude the section by arguing that the common name of QUE algebras which stands for ``Quantum Universal Enveloping algebras" is misleading. It is perhaps more appropriate to refer to the Drinfeld-Jimbo quantum group either as a ``quantized Poisson-Lie function algebra", which is Drinfeld's original point of view, or as a ``curved universal enveloping algebra".

\subsection{Representations}
\label{subsection:representations}
Since Theorem \ref{thm:commutativeDiagram4Spaces} deals with another presentation of the Drinfeld-Jimbo quantum group, the representation theory is essentially unchanged. Of course, the constants $\hbar$ and $r$ appear at various places and the goal of this subsection is to record these unessential changes.

Recall from \cite[Page 61]{KS12} that $\Uc_q(\slfrak_2)$, with the usual presentation (\ref{eq:Uq_pres}), that for every $\lambda \in \N$, there exists a representation $V^q(\lambda)$ of dimension $\lambda+1$ and with basis labelled 
$$ \left( e_k \ ; \ k=-\lambda, -\lambda+2, \dots, \lambda-2, \lambda \right) \ .$$
The action on generators is given in terms of $q$-integers $[n]_q := \frac{q^{n}-q^{-n}}{q-q^{-1}}$ by:
\begin{align*}
 \left\{\begin{array}{ccc}
 K \cdot e_k = & q^k e_k \ , & \\
 E \cdot e_k = & \sqrt{\left[\half\lambda-\half k \right]_q
                       \left[\half \lambda+\half k+1 \right]_q
                      } e_{k+2}
             = & \frac{\sqrt{
                      q^{\lambda+1}+q^{-(\lambda+1)}
                      -q^{k+1}-q^{-(k+1)}
                      }}
                      {\left| q-q^{-1} \right|} e_{k+2} \ ,\\
 F \cdot e_k = & \sqrt{\left[\half \lambda+ \half k \right]_q
                       \left[\half \lambda-\half k+1 \right]_q
                      } e_{k-2} 
             = & \frac{\sqrt{
                      q^{\lambda+1}+q^{-(\lambda+1)}
                      -q^{k-1}-q^{-(k-1)}
                      }}
                      {\left| q-q^{-1} \right|} e_{k-2} \ .
 \end{array}
 \right.
\end{align*}

By definition, the weight lattice is the union of possible spectra for the operator $H$, in all possible representations. Upon rescaling thanks to the isomorphism \eqref{def:isom}, the weight lattice is rescaled from $\Z$ to $\hbar \Z$. Highest weights are therefore of the form $\Lambda^\hbar = \hbar \lfloor \Lambda / \hbar \rfloor \in \hbar \Z$ for $\Lambda \geq 0$, and highest weight representations are indexed by such elements. As such, denoting by $V^q(\Lambda^\hbar)$ the irreducible representation  of dimension $\lfloor \Lambda/\hbar \rfloor + 1$, it has a linear basis:
\begin{equation}
\left( e_{\hbar k } \ ; \ k=-\lfloor \Lambda/\hbar\rfloor, \lfloor \Lambda/\hbar\rfloor+2,\ldots, \lfloor \Lambda/\hbar\rfloor-2, \lfloor \Lambda/\hbar\rfloor \right)
\subset
V^q(\Lambda^\hbar)
\ . 
\end{equation}
 The action on generators of $\Uc_q^\hbar(\slfrak_2)$ becomes:
\begin{align}
\label{eq:repAction}
  \left\{
  \begin{array}{cc}
  K \cdot e_{\hbar k} = & e^{-r \hbar k} e_{\hbar k} \ , \\
  E \cdot e_{\hbar k} = & \alpha_r^\hbar \sqrt{e^{r(\Lambda^\hbar+\hbar)}+e^{-r(\Lambda^\hbar+\hbar)}
                       -e^{r\hbar(k+1)}-e^{-r\hbar(k+1)}
                       } e_{\hbar(k+2)} \ ,\\
  F \cdot e_{\hbar k} = & \alpha_r^\hbar \sqrt{e^{r(\Lambda^\hbar+\hbar)}+e^{-r(\Lambda^\hbar+\hbar)}
                       -e^{r\hbar(k-1)}-e^{-r\hbar(k-1)}
                       } e_{\hbar(k-2)} \ ,
  \end{array}
  \right.
\end{align}
where $\alpha_r^\hbar = \frac{1}{2r} \sqrt{ \frac{2r \hbar}{e^{r \hbar}-e^{-r \hbar}} }$. The vector $e_{\hbar k}$ is therefore the vector of weight $\hbar k$. In particular, $e_{\Lambda^\hbar}$ is the highest weight vector with weight $\Lambda^\hbar$. 

Likewise, instead of the usual Casimir \eqref{def:usualCasimir} of $\Uc_q(\slfrak_2)$, which acts on $V^q(\lambda)$ as the constant:
\begin{align}
\label{eq:usualCasimirConstant}
       \left( C^q \right)_{| V^q(\lambda) }
 & :=  \frac{q^{ \lambda+1} + q^{-( \lambda+1)}}
           {\left( q - q^{-1} \right)^2}\ ,
\end{align}
we have to consider our Casimir \eqref{def:Casimir} which acts on $V^q(\Lambda^\hbar)$ like the constant:
\begin{align}
\label{eq:casimirConstant}
        C^{r, \hbar} (\Lambda^\hbar)
 & := \half \frac{2r\hbar}{ e^{r\hbar} - e^{-r\hbar} }
 \left(
             e^{  r (\Lambda^\hbar + \hbar)}
            +e^{- r (\Lambda^\hbar + \hbar)}
            \right)  \ .
\end{align}
This is equivalent to saying that the operator $\Lambda^{r, \hbar}$ defined in \eqref{def:elementLambda} acts on $V^q({\Lambda^\hbar})$ as the constant $\Lambda^\hbar$. Notice that this constant is independent of the curvature $r>0$.

\section{Static semi-classical limits and crystallization}
\label{section:statics}

The main theorem of this section deals with an effective implementation of the orbit method i.e. quantum observables converging to classical observables, while tracking the effect of the curvature parameter $r > 0$. This allows to show the appearance of crystals in the infinite curvature limit, both on the quantum and semi-classical side. 

\medskip

\paragraph{\bf Dressing orbits:} In order to give an effective implementation of the orbit method, we first require the definition of the dressing action of $SU_2$ on $\left( SU_2^* \right)_r \approx NA$, which is exactly the curved version of the coadjoint action (see \cite{K97}). It is defined as:
$$
\begin{array}{ccc}
SU_2 \times NA  & \rightarrow & NA \\
(k,b)           & \mapsto     & k \cdot b \ ,
\end{array}
$$
where $k \cdot b = b'$ is the lower triangular matrix obtained via the Gram-Schmidt decomposition $ kb = b'k' \ ,$ with $k' \in SU_2$. 
We identify $\Lambda > 0$ with the diagonal matrix $\begin{pmatrix} \Lambda & 0 \\ 0 & -\Lambda \end{pmatrix}$. By definition, the dressing orbit of $SU_2$ passing through $\Lambda \in \R_+$ in $\left( SU_2^* \right)_r$ is:
\begin{align}
\Oc_r(\Lambda) := & SU_2 \cdot \Lambda
                = \left\{ k \cdot \exp\left( r \begin{pmatrix} \Lambda & 0 \\ 0 & -\Lambda \end{pmatrix} \right)
                            \in  \left( SU_2^* \right)_r \ | \ k \in SU_2
                    \right\} \ .
\end{align}
The orbit $\Oc_r(\Lambda)$ is endowed with a natural invariant measure, induced by the Haar measure on $SU_2$. A uniform element on $\Oc_r(\Lambda)$ is a random variable which is distributed according to this invariant measure.

\medskip

\paragraph{\bf Crystals:} For any fixed quantum group, Kashiwara associates to any representation $V^q$ a set $\Bc$ called the crystal basis or crystal for short. Morally, a crystal is the combinatorial object which remains upon taking the $q \rightarrow 0$ limit of the so-called Kashiwara's global basis which is equivalent to Lusztig's canonical basis. For reference, we recommend the comprehensive \cite{KC02}. A crystal is a set endowed with a graph structure so that connected components are in bijection with highest weight irreducible modules. Furthermore, Kashiwara defined a combinatorial tensor product rule for two crystals $\Bc$ and $\Bc'$, by giving a crystal graph structure to:
$$ \Bc \otimes \Bc' := \left\{ b_1 \otimes b_2 \ | \ (b_1 , b_2) \in \Bc \times \Bc' \right\} \ .$$
As a set, it is nothing but $\Bc \times \Bc'$. Nevertheless the notation $\Bc \otimes \Bc'$ hints to the fact that it has a richer combinatorial structure thanks to the crystal graph.

In the case of $\Uc_q^\hbar(\slfrak_2)$, let us illustrate the previous paragraph, while changing the weight lattice from $\Z$ to $\hbar \Z$ to accommodate our setting. We write that any crystal $\Bc$ is a disjoint union of highest weight crystals $\Bc(\Lambda)$:
$$
   \Bc = \sqcup_{\Lambda \in \Cc} \Bc(\Lambda) \ ,
$$
where $\Cc$ is the multi-set of connected components - that is to say that repetition of $\Lambda \in \hbar \N$ is allowed. Any highest weight crystal $\Bc(\Lambda)$ for a highest weight $\Lambda\in \hbar \N$ is naturally identified with the weight basis:
\begin{align}
\label{eq:BcIdentification}
\Bc(\Lambda) \approx & \ \left\{ -\Lambda, -\Lambda + 2\hbar, \dots, \Lambda - 4\hbar, \Lambda - 2\hbar, \Lambda \right\} \ ,
\end{align}
and the crystal graph structure is depicted in Figure \ref{fig:crystalExample}. It is obtained by connecting integer multiples of $\hbar$ which are neighbors on the real line.

\begin{center}
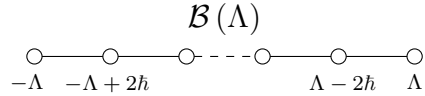
\begin{figure}[ht!]
\begin{tikzpicture}[scale=1]
\draw (4.5,4.5) node {$\Bc\left( \Lambda  \right)$} ; 
\foreach \x in {2,...,7} { \draw (\x,4) circle (0.1) ; }
\foreach \x in {2,3,5,6} { \draw (\x.1,4) -- (\x.9,4)   ; } \draw[dashed] (4.1,4) -- (4.9,4) ;
\draw (1.9,3.9) node[below] {\tiny $-\Lambda$} ;
\draw (2.95,3.9) node[below] {\tiny $-\Lambda+2\hbar$} ;
\draw (6.05,3.9) node[below] {\tiny $\Lambda-2\hbar$} ;
\draw (7,3.9) node[below] {\tiny $\Lambda$} ;

\end{tikzpicture}
\caption{The crystal graph for $\Bc(\Lambda)$ for $\Lambda \in \hbar \N$.}
\label{fig:crystalExample}
\end{figure}
\end{center}

For any crystal $\Bc$, there are two well-defined maps 
$$ \wt: \Bc \rightarrow \hbar \Z \ ; \quad \hw: \Bc \rightarrow \hbar \N \ ,$$
which respectively give the weight of a crystal basis element and the highest weight corresponding to its connected component. Notice that for the very special case of $\slfrak_2$, the identification \eqref{eq:BcIdentification} corresponds to fixing $\Lambda \in \hbar \N$ and identifying $b \in \Bc(\Lambda)$ with $\wt(b) \in \hbar \Z$. The Kashiwara tensor product rule for $b_1 \in \Bc(\Lambda_1)$ and $b_2 \in \Bc(\Lambda_2)$ states that (see \cite[D\'ef.~2.3.2]{KC02}):
$$
   \wt( b_1 \otimes b_2 ) = \wt(b_1) + \wt(b_2) \ ; \quad
   \hw( b_1 \otimes b_2 ) = \max\left( \Lambda_1 + \wt(b_2),  \ -\wt(b_1) + \Lambda_2 \right) \ .
$$
In our opinion, this rule and its pictorial description (Fig. \ref{fig:crystalTensorProduct}) are landmarks of crystal combinatorics. The left-hand side in Figure \ref{fig:crystalTensorProduct} is classical (see e.g. \cite[Chapter 2, Fig. 2.1]{KC02}) and is reproduced for the reader's convenience.

\begin{center}
\begin{figure}[ht!]
\begin{tikzpicture}[scale=0.8]
\draw (0.0,1.5) node {\small $\Bc\left( \Lambda_2^\hbar \right)$} ; 
\draw (4.5,4.5) node {\small $\Bc\left( \Lambda_1^\hbar \right)$} ; 

\foreach \x in {0,...,3} { \draw (1,\x) circle (0.1) ; }
\draw (1,0.1) -- (1,0.9) ; \draw[dashed] (1,1.16) -- (1,1.93) ;  \draw (1,2.1) -- (1,2.9) ; 

\foreach \x in {2,...,7} { \draw (\x,4) circle (0.1) ; }
\foreach \x in {2,3,5,6} { \draw (\x.1,4) -- (\x.9,4)   ; } \draw[dashed] (4.16,4) -- (4.93,4) ;

\foreach \x in {0,...,3} { \draw (2,\x) circle (0.1) ; }
\foreach \x in {3,...,7} { \draw (\x,3) circle (0.1) ; }
\foreach \x in {0,2} {\draw (2,\x.1) -- (2,\x.9) ; }
\draw[dashed] (2,1.16) -- (2,1.93) ;
\foreach \x in {2,3,5,6} { \draw (\x.1,3) -- (\x.9,3)  ; }
\draw[dashed] (4.16,3) -- (4.93,3) ;

\draw (3,0) circle (0.1) ;  \draw (3,1) circle (0.1) ; \draw (3,2) circle (0.1) ; \draw (4,2) circle (0.1) ; \draw (5,2) circle (0.1) ; \draw (6,2) circle (0.1) ; \draw (7,2) circle (0.1) ;
\draw (3,0.1) -- (3,0.9) ; \draw (3,1.1) -- (3,1.9) ;  \draw (3.1,2) -- (3.9,2) ;   \draw (4.1,2) -- (4.9,2) ;  \draw (5.1,2) -- (5.9,2) ;  \draw (6.1,2) -- (6.9,2) ;
\draw (4,0) circle (0.1) ; \draw (4,1) circle (0.1) ; \draw (5,1) circle (0.1) ; \draw (6,1) circle (0.1) ;  \draw (7,1) circle (0.1) ; 
\draw (4,0.1) -- (4,0.9) ;  \draw (4.1,1) -- (4.9,1) ;  \draw (5.1,1) -- (5.9,1) ;  \draw (6.1,1) -- (6.9,1) ;
\draw (5,0) circle (0.1) ; \draw (6,0) circle (0.1) ;  \draw (7,0) circle (0.1) ;
\draw (5.1,0) -- (5.9,0) ; \draw (6.1,0) -- (6.9,0)  ;


\draw (8.5,2) node {\large$\overset{\hbar\to0}{\longrightarrow}$ } ;

\draw (13,4.5) node {\small $\Lambda_1>0$} ; 
\draw (17,1.5) node {\small $\Lambda_2>0$} ; 

\draw (10,0) -- (16,0) ; \draw (10,0) -- (10,4) ; \draw (16,0) -- (16,4) ; \draw (10,4) -- (16,4) ;
\draw (11,0) -- (11,3) ; \draw (11,3) -- (16,3) ; 
\draw (12,0) -- (12,2) -- (16,2) ;
\draw (13,0) -- (13,1) -- (16,1) ;
\end{tikzpicture}
\caption{Tensor product of crystals and scaling limit as $\hbar \rightarrow 0$. Connected components correspond to level sets of the map $\hw$.}
\label{fig:crystalTensorProduct}
\end{figure}
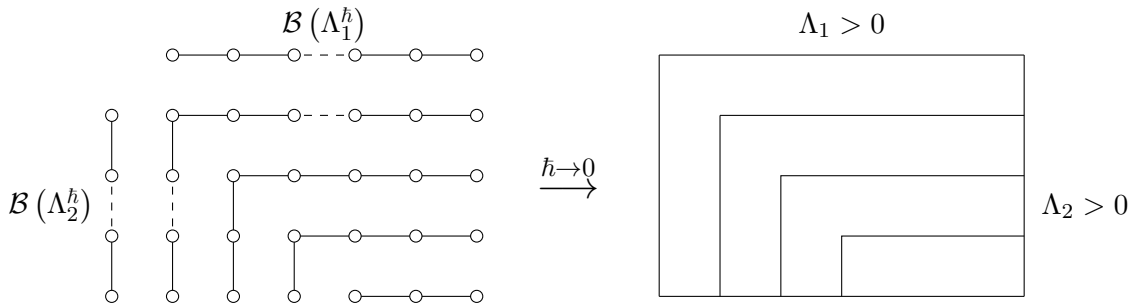
\end{center}

The appearance of rational expressions in the max-plus algebra is intimately related to the following tropicalization trick:
\begin{align}
\label{eq:tropTrick}
\forall (a,b) \in \R \times \R, \ 
   \lim_{r \rightarrow \infty}   
   \frac{1}{r}
   \log \left( e^{r a} + e^{r b} \right)
   = 
   \max(a,b) \ ,
\end{align}
which is simple yet ubiquitous in crystal combinatorics (see \cite[Chapter 7]{C13}). As we shall see, it will play an important role in this section and in the next.

We now state the main theorem of this section:
\begin{thm}
\label{thm:staticLimits}
Let $\Lambda_1,\Lambda_2>0$ and let $g_1$ and $g_2$ be random variables uniformly distributed on $\Oc_r(\Lambda_1)$ and $\Oc_r(\Lambda_2)$ respectively. Also for all observables $\Fc \in \Uc^\hbar_q\left( \slfrak_2 \right)$, write 
$$ f := \Fc \modulo \hbar \in \C\left[ (SU_2^*)_r \right] \ , $$
where $\modulo \hbar$ is the quotient map appearing in Theorem \ref{thm:commutativeDiagram4Spaces}.
 
Then the following semi-classical limits hold:
\begin{align}
   \label{eq:oneOrbit}
   \frac{ \Tr_{V^q(\Lambda_1^\hbar)}( q^{-H} \Fc) }
        { \Tr_{V^q(\Lambda_1^\hbar)}( q^{-H} ) }
    & \ \overset{\hbar \rightarrow 0}{\longrightarrow} \ 
      \E\left(   f(g_1)\right), \\
   \label{eq:twoOrbits}
   \frac{ \Tr_{V^q(\Lambda_1^\hbar) \otimes V^q(\Lambda_2^\hbar)}( q^{-H} \Fc) }
        { \Tr_{V^q(\Lambda_1^\hbar) \otimes V^q(\Lambda_2^\hbar)}( q^{-H} ) }
  & \ \overset{\hbar \rightarrow 0}{\longrightarrow} \ 
    \E \left(  f(g_1g_2) \right) .
\end{align}

In particular, the crystal regime appears by letting $r \rightarrow \infty$ and by taking an observable depending only on $H$ and on the operator $\Lambda^{r,\hbar}$ defined in (\ref{def:elementLambda}). More specifically, upon writing $\Fc = \varphi(\Lambda^{r,\hbar}) \psi(H)$ for two polynomials $\varphi$ and $\psi$, the following commutative diagram holds:
\begin{center}
\begin{equation} \label{eq:CDcrystals}
\hspace*{-1cm}
\begin{tikzpicture}
\node at (0,0) (A) {
$ 
	    \tr_{V^q(\Lambda_1^\hbar) \otimes V^q(\Lambda_2^\hbar)} [ \varphi(\Lambda^{r,\hbar}) \psi(H) ] 
$
} ;
\node at (9,0) (B)  {
$ \displaystyle
\frac{\E \left( e^{-rH(g_1g_2)} \varphi \circ \Lambda(g_1g_2)\,  \psi\circ H(g_1g_2)  \right)}
{\E \left(  e^{-rH(g_1g_2)}   \right)}
$
};
\node at (-0.5,2.1) (C)  {
$
 \displaystyle \sum_{ \substack{b_1 \otimes b_2  \in \\     \Bc(\Lambda_1^\hbar ) \otimes \Bc( \Lambda_2^\hbar ) }} 
\!\!\!\!\!\!\!  \frac{\varphi  \left(  \hw( b_1 \! \otimes\! b_2 )\right) \,  \psi \left( \wt( b_1\! \otimes \! b_2 ) \right) }  { \scriptstyle {( \Lambda_1^\hbar + 1 ) ( \Lambda_2^\hbar + 1 ) } }
$
};
\node at (9.4,2.4) (D)  {
$ 
\frac{1}{4 \Lambda_1 \Lambda_2}
{ \displaystyle \int_{-\Lambda_1}^{\Lambda_1} \int_{-\Lambda_2}^{\Lambda_2} }
 \varphi \!\left( \hw_{\Lambda_1,\Lambda_2}(\mu_1,\mu_2) \right)   \psi( \mu_1 \! + \! \mu_2 )  d \mu_1 d \mu_2 
$
} ;
\draw[->] (3,0) -- (5,0) ; \draw (4,0) node[above]  {\tiny $\hbar \rightarrow 0$} ; 
\draw[->] (3.2,2.38) -- (4.5,2.38) ; \draw (3.8,2.38) node[above]  {\tiny $\hbar \rightarrow 0$} ; 
\draw[->] (0.3,0.6) -- (0.3,1.5) ; \draw (0.3,1) node[right]  {\tiny $r \to \infty$} ; 
\draw[->] (8.5,0.7) -- (8.5,1.8) ; \draw (8.5,1.25) node[right]  {\tiny $r \to \infty$} ; 
\end{tikzpicture}
\end{equation}
\end{center}
where $\hw_{\Lambda_1,\Lambda_2}(\mu_1,\mu_2)= \max (\Lambda_1+\mu_2,-\mu_1+\Lambda_2)$ and $\Lambda(g) = \Lambda \in \R$ is such that $g \in \Oc_r(\Lambda)$. In order to simplify notation, we identify in (\ref{eq:CDcrystals}) a function and its image by the quotient map  $\modulo \hbar$. 
\end{thm}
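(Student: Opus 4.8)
The statement bundles together two semi-classical limits (the arrows labeled $\hbar \to 0$) and two infinite-curvature limits (the arrows labeled $r \to \infty$), and the proof is organized accordingly. The core analytic input is the explicit action \eqref{eq:repAction} of the generators on $V^q(\Lambda^\hbar)$, together with the fact, recorded at the end of subsection \ref{subsection:representations}, that $\Lambda^{r,\hbar}$ acts on $V^q(\Lambda^\hbar)$ as the scalar $\Lambda^\hbar$ and $H$ acts diagonally with eigenvalue $\hbar k$ on $e_{\hbar k}$. First I would establish the two semi-classical limits \eqref{eq:oneOrbit} and \eqref{eq:twoOrbits}. The normalized trace $\Tr_{V^q(\Lambda_1^\hbar)}(q^{-H}\Fc)/\Tr_{V^q(\Lambda_1^\hbar)}(q^{-H})$ is, by \eqref{eq:repAction}, a weighted average of matrix coefficients of $\Fc$ over the weight basis, with weights $e^{r\hbar k}/\sum_j e^{r\hbar j}$. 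As $\hbar \to 0$ with $\Lambda_1$ fixed, the lattice $\hbar\Z \cap [-\Lambda_1,\Lambda_1]$ fills out $[-\Lambda_1,\Lambda_1]$, the structure constants in \eqref{eq:repAction} converge to smooth functions of the continuous weight variable $\mu = \hbar k$, and the $q^{-H}$-twisted trace becomes the integral of $f$ against the pushforward, under $\mu \mapsto$ (a point of $\Oc_r(\Lambda_1)$), of the measure $\propto e^{r\mu}\,d\mu$ — which is exactly the image of the invariant measure on the dressing orbit $\Oc_r(\Lambda_1)$ under the moment-map projection. Identifying this measure with $\E(f(g_1))$ uses the description of $\Oc_r(\Lambda)$ and its invariant measure from the paragraph preceding the theorem; this is essentially the $SL_2$ case of Kirillov's character-equals-orbital-integral correspondence, and the convergence is a dominated-convergence / Riemann-sum argument once one checks that $\Fc \bmod \hbar = f$ controls the limit of the matrix coefficients. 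For \eqref{eq:twoOrbits} one uses $\Delta_r$: the trace over $V^q(\Lambda_1^\hbar)\otimes V^q(\Lambda_2^\hbar)$ of $q^{-H}\Fc$ factors, via $\Delta_r(q^{-H}) = q^{-H}\otimes q^{-H}$ and multiplicativity of $\Delta_r$, into a double sum which in the limit becomes the double orbital integral representing $\E(f(g_1 g_2))$, since the product law on $(SU_2^*)_r$ is precisely what $\Delta_r$ dualizes (Lemma \ref{lemma:coproduct_r}).

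\textbf{The crystal diagram.} For the square \eqref{eq:CDcrystals} I would specialize to $\Fc = \varphi(\Lambda^{r,\hbar})\psi(H)$ and compute each of the four corners directly. The bottom-left corner $\tr_{V^q(\Lambda_1^\hbar)\otimes V^q(\Lambda_2^\hbar)}[\varphi(\Lambda^{r,\hbar})\psi(H)]$: decompose the tensor product into irreducibles $V^q(\nu^\hbar)$; on each summand $\Lambda^{r,\hbar}$ is the scalar $\nu^\hbar$ and $H$ acts with the weights of that summand, so the normalized trace is $\frac{1}{(\Lambda_1^\hbar+1)(\Lambda_2^\hbar+1)}$ times a sum over pairs $(b_1,b_2)$ in the weight bases, where $\hw(b_1\otimes b_2)$ is the label of the irreducible containing $b_1\otimes b_2$ and $\wt(b_1\otimes b_2) = \wt(b_1)+\wt(b_2)$. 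Matching $\hw$ with Kashiwara's rule $\hw(b_1\otimes b_2) = \max(\Lambda_1^\hbar + \wt(b_2), -\wt(b_1)+\Lambda_2^\hbar)$ is the classical Clebsch–Gordan combinatorics for $\Uc_q(\slfrak_2)$, so the left vertical arrow ($r\to\infty$) is exact — it holds at finite $\hbar$, because $\Lambda^{r,\hbar}$ was \emph{designed} (see \eqref{def:elementLambda} and \eqref{eq:casimirConstant}) to act as the highest weight $\nu^\hbar$ independently of $r$, so the $r\to\infty$ limit just reads off the multiset of irreducible components, which the $q\to 0$ crystal governs. For the right vertical arrow, I would send $r\to\infty$ in $\frac{\E(e^{-rH(g_1g_2)}\varphi\circ\Lambda(g_1g_2)\,\psi\circ H(g_1 g_2))}{\E(e^{-rH(g_1g_2)})}$: the factor $e^{-rH}$ concentrates the dressing orbits near their ``lowest weight'' boundary, a Laplace/saddle-point localization, and the surviving contribution is governed by the tropicalization of the group law on $(SU_2^*)_r$ — precisely the max-plus formula $\hw_{\Lambda_1,\Lambda_2}(\mu_1,\mu_2)=\max(\Lambda_1+\mu_2,-\mu_1+\Lambda_2)$ — yielding the top-right integral. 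The top horizontal arrow ($\hbar\to 0$ on the crystal side) is the Riemann-sum passage from $\Bc(\Lambda_1^\hbar)\otimes\Bc(\Lambda_2^\hbar)$, a lattice of size $\propto \hbar^{-2}$, to the continuous square $[-\Lambda_1,\Lambda_1]\times[-\Lambda_2,\Lambda_2]$, with $\hw$ and $\wt$ converging pointwise to their continuous avatars; the bottom horizontal arrow is the $\hbar\to0$ case of \eqref{eq:twoOrbits} applied to $f = \varphi\circ\Lambda \cdot \psi\circ H$ after inserting the $e^{-rH}$ twist.

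\textbf{Assembling and the main obstacle.} Having computed all four corners and all four edges, commutativity of \eqref{eq:CDcrystals} follows because the outer square of limits commutes — each corner is an honest closed-form expression and the four edge-maps are shown individually to realize the claimed transitions. The step I expect to be the genuine obstacle is the right vertical arrow, the $r\to\infty$ (infinite curvature) localization of the orbital integral over $\Oc_r(\Lambda_1)\cdot\Oc_r(\Lambda_2)$ against the weight $e^{-rH}$: one must understand how the product of two dressing orbits decomposes, parametrize the fibers of the map $g \mapsto (\Lambda(g), H(g))$, and prove that the Laplace asymptotics pick out exactly the tropical ``hw'' locus without leaking mass — this is where curvature genuinely enters, and it is the geometric counterpart of the crystallization $q\to 0$ on the representation side. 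A secondary (but only technical) difficulty is making the dominated-convergence estimates for the $\hbar\to 0$ Riemann sums uniform enough to interchange with the $r\to\infty$ limit where needed; since the left column is $r$-independent at finite $\hbar$, one can in fact route the proof so that $r\to\infty$ is always taken \emph{after} $\hbar\to 0$ on the right and taken \emph{exactly} on the left, sidestepping any uniformity issue. I would also remark that an alternative, more streamlined route to the left vertical arrow is to bypass the explicit trace computation entirely: since $\varphi(\Lambda^{r,\hbar})$ is central and acts as $\varphi(\nu^\hbar)$ on $V^q(\nu^\hbar)$, the whole left-hand side is manifestly a function of the decomposition multiplicities, which are $r$- and $q$-independent (they are the classical Clebsch–Gordan numbers), so only $\psi(H)$ carries $r$-dependence through its weight multiplicities, and the $r\to\infty$ limit there is the elementary statement that $\sum_k e^{-r\hbar k}(\cdots)$ localizes at the minimal weight.
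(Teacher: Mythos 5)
Your overall plan closely tracks the paper's: \eqref{eq:oneOrbit} via Riemann sums of diagonal matrix coefficients matched against Archimedes' description of the dressing-orbit measure (Proposition \ref{proposition:uniformDressing}), \eqref{eq:twoOrbits} via the factorization of $\Tr(q^{-H}\cdot)$ through $\Delta_r$ and the dual group law, and the four edges of \eqref{eq:CDcrystals} identified individually (Riemann sum on top, specialization of \eqref{eq:twoOrbits} on the bottom, Clebsch--Gordan combinatorics on the left, tropicalization on the right). Your observation that the left vertical arrow is in fact an equality valid at every finite $r$ --- because $\tr[\varphi(\Lambda^{r,\hbar})\psi(H)]$ only sees the decomposition multiplicities, which are classical and $q$-independent, and $\Lambda^{r,\hbar}$ acts as the scalar $\nu^\hbar$ on each summand --- is a genuine sharpening of the paper's $q\to 0$ phrasing and is correct.

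There are, however, two concrete misidentifications of mechanism. First, the right vertical arrow is \emph{not} a Laplace concentration of the orbital measure at a ``lowest weight boundary''. When you parametrize $g_j$ as in Proposition \ref{proposition:uniformDressing}, the Archimedes density in the coordinate $\mu_j = H(g_j)$ is $\propto e^{r\mu_j}\,d\mu_j$ and $H(g_1g_2)=\mu_1+\mu_2$; hence the twist $e^{-rH(g_1g_2)}$ in both numerator and denominator exactly cancels the density, yielding the flat measure $d\mu_1\,d\mu_2\,d\Theta_1\,d\Theta_2$ at \emph{every} $r$. Nothing concentrates. The tropicalization occurs pointwise (almost everywhere) inside the argument of $\varphi$: the scalar $\Lambda(g_1g_2)=\frac1r\Argcosh(\cosh(r(\mu_1+\mu_2))+2r^2|F(g_1g_2)|^2)$ tends a.e. to $\max(\Lambda_1+\mu_2,-\mu_1+\Lambda_2)$ by the trick \eqref{eq:tropTrick}, and one then concludes by dominated convergence --- your worry about ``leaking mass'' does not arise. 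Second, your ``alternative, streamlined route'' to the left arrow introduces a phantom weight: you write that $\psi(H)$ carries $r$-dependence ``through its weight multiplicities'' and invoke a localization of $\sum_k e^{-r\hbar k}(\cdots)$ at the minimal weight. But the bottom-left corner is the plain normalized trace $\tr$, not the $q^{-H}$-twisted trace from \eqref{eq:oneOrbit}--\eqref{eq:twoOrbits}; there is no factor $e^{-r\hbar k}$ present, and in fact no $r$-dependence at all in that corner. As you correctly note elsewhere in the same paragraph, the left arrow is an identity, so no localization argument can or should be invoked there. Once these two points are corrected, your argument matches the paper's.
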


Limits (\ref{eq:oneOrbit}) and (\ref{eq:twoOrbits}) are considered folklore in the representation theory community and often not made explicit. In the literature, the authors managed to only identify \cite[Theorem 3.5]{Yi16} where Eq.~\eqref{eq:oneOrbit} is explicitly present with morally $r=1$. Sun's approach works for $\Uc_q(\glfrak_n)$ and his proof uses a neat induction on the degree of PBW basis elements and arguments from symplectic geometry. Our statement has the following distinctive features:
\begin{itemize}
\item We keep track of the dependence in $r > 0$.
\item Contrary to Sun's paper \cite{Yi16}, we only treat the very specific case of $\slfrak_2$ in Eq. \eqref{eq:oneOrbit}, where the structure of representations can be made entirely explicit. Although less general, the proof is interesting in its own right as it transparently shows that matrix coefficients of representations are indeed deformations of functions on dressing orbits. We have not found this proof in the literature.
\item We complete the picture with the convolution of two orbits in Eq. \eqref{eq:twoOrbits}.
\item We show the appearance of the crystal tensor product rule in the infinite curvature limit thanks to the above commutative diagram \eqref{eq:CDcrystals}. 
\end{itemize}

The proof of Theorem \ref{thm:staticLimits} is carried in the next four subsections: we describe in the first subsection the uniform measure on dressing orbits  and then prove in the remaining three subsections the  convergences stated in Theorem~\ref{thm:staticLimits}. But already, let us give a remark on how to understand the quantum Casimir \eqref{def:Casimir} as the quantization of a dressing orbit invariant.

\begin{rmk}
\label{rmk:CasimirIsDressingInvariant}
We have seen in Theorem \ref{thm:commutativeDiagram4Spaces} that as $\hbar \rightarrow 0$, $\left( \Uc_q^\hbar(\slfrak_2), \dagger \right)$ degenerates to the coordinate algebra $\C[ \left( SU_2^* \right)_r ]$. Moreover, in that limit, the Casimir \eqref{def:Casimir} becomes:
$$ \half \left( 4r^2 EF + \left( e^{rH} + e^{-rH} \right) \right) = \tr(x x^\dagger) \ ,$$
for $x \in \left( SU_2^* \right)_r$, which is essentially the only function of $x$ invariant under the dressing action. Indeed, for $x \in \Oc_r( \Lambda )$, we have that $\tr(x x^\dagger) = \cosh\left( r \Lambda \right)$ ; and every function in $\C[ \left( SU_2^* \right)_r ]$ invariant under the dressing action will be a function of $\Lambda$.
\end{rmk}

\subsection{Uniform measure on a dressing orbit and Archimedes' Theorem}

The following proposition describes, in coordinates, the uniform measure on a dressing orbit $\Oc_r(\Lambda) = SU_2 *_r \Lambda$. By uniform measure, we mean the measure induced by the Haar measure.

\begin{proposition}
\label{proposition:uniformDressing}
A uniform element in $\Oc_r(\Lambda) = SU_2 *_r \Lambda$ is uniquely written:
\begin{align}
\label{eq:uniformDressing}
 g = & 
   \begin{pmatrix}
   e^{\half r \mu}  & 0\\
   e^{i\Theta} \sqrt{e^{r\Lambda}+e^{-r\Lambda} - e^{r \mu} - e^{-r \mu}}
   & e^{-\half r \mu}
   \end{pmatrix} \ ,
\end{align}
with $\mu$ and $\Theta$ independent random variables. The phase $\Theta$ is uniform on $[0, 2 \pi]$ and $\mu$ follows the distribution:
$$ \P\left( \mu \in dx \right)
 = \frac{r e^{rx}dx}{e^{r\Lambda}-e^{-r\Lambda}}
   \mathds{1}_{[-\Lambda, \Lambda]} \ .
$$
\end{proposition}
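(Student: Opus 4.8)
The plan is to compute the pushforward of the Haar measure on $SU_2$ under the dressing action $k \mapsto k \cdot \Lambda$, using an explicit parametrization of $SU_2$ and the Gram--Schmidt (Iwasawa-type) decomposition that defines the dressing action.

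First I would parametrize a generic element $k \in SU_2$ and write down $k \exp\!\left(r \begin{pmatrix} \Lambda & 0 \\ 0 & -\Lambda \end{pmatrix}\right)$, then perform the Gram--Schmidt decomposition $kb = b' k'$ to extract $b' = k \cdot b \in NA$ explicitly. Writing $b'$ in the coordinates of \eqref{def:functionsHEF}, i.e.
$$ b' = \begin{pmatrix} e^{\half r \mu} & 0 \\ e^{i\Theta}\sqrt{e^{r\Lambda}+e^{-r\Lambda}-e^{r\mu}-e^{-r\mu}} & e^{-\half r \mu} \end{pmatrix}, $$
one checks first that $\tr(b'(b')^\dagger) = \cosh(r\Lambda)$ is preserved (this is Remark \ref{rmk:CasimirIsDressingInvariant} and pins down the relation between the diagonal entry $e^{\half r\mu}$ and the off-diagonal modulus), so the only genuine degrees of freedom are the ``latitude'' $\mu$ and the ``longitude'' $\Theta$. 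The form of the off-diagonal entry is then forced, and it remains to identify the joint law of $(\mu,\Theta)$.

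The key computational step is to push the normalized Haar measure on $SU_2 \cong S^3$ (or, after using the invariance and reducing to the coadjoint-type picture, on the two-sphere $S^2 = SU_2/U(1)$) through this map. The $U(1)$-invariance of the whole picture (left multiplication by the torus acts by rotating $\Theta$) immediately gives that $\Theta$ is uniform on $[0,2\pi]$ and independent of $\mu$; so the real content is the marginal law of $\mu$. Here I expect the cleanest route is an avatar of \emph{Archimedes' theorem}: the curved dressing orbit $\Oc_r(\Lambda)$ is, as a $U(1)$-space, a topological sphere, and the claim is that the coordinate $\mu \in [-\Lambda,\Lambda]$ has density proportional to $r e^{rx}$ — the curved analogue of Archimedes' ``cylinder'' projection, which in the flat limit $r \to 0$ degenerates to the uniform density $\tfrac{1}{2\Lambda}\mathds{1}_{[-\Lambda,\Lambda]}$ on a segment (recovering the classical statement for coadjoint orbits of $SU_2$). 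Concretely I would either (a) compute the Jacobian of the map $S^2 \to [-\Lambda,\Lambda]$, $k \mapsto \mu(k)$, directly in spherical coordinates on $S^2$, checking that the surface-area element pushes forward to $\frac{r e^{rx}}{e^{r\Lambda}-e^{-r\Lambda}}dx$; or (b) use that $\mu$ is (a scaling of) the $H$-weight and invoke the Duistermaat--Heckman / orbit-method heuristic that the pushforward of Liouville measure on the orbit is the ``curved orbital measure'', whose explicit density is exactly the semiclassical limit $\hbar \to 0$ of the weight multiplicities read off from \eqref{eq:repAction} with the $q^{-H}$ twist. Normalization is then just $\int_{-\Lambda}^{\Lambda} r e^{rx}\,dx = e^{r\Lambda}-e^{-r\Lambda}$.

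The main obstacle, and the only place requiring care, is the explicit Gram--Schmidt bookkeeping: one must verify that after the decomposition $kb = b'k'$ the lower-triangular factor $b'$ genuinely has the stated form (in particular that its diagonal is real positive and that the off-diagonal modulus is $\sqrt{e^{r\Lambda}+e^{-r\Lambda}-e^{r\mu}-e^{-r\mu}}$, not some other branch), and that the change of variables from the $S^2$-coordinate to $\mu$ has no hidden singularities at the poles $\mu = \pm\Lambda$. Everything else — independence and uniformity of $\Theta$, the shape $re^{rx}$ of the density, the normalization — follows from symmetry and a one-line integral once the parametrization is in place. I would present the proof by: (1) parametrize $k$ and compute $b'$; (2) read off $\mu$ and $\Theta$ and note $\tr(b'(b')^\dagger)$ is the orbit invariant; (3) use $U(1)$-invariance for $\Theta$; (4) compute the Jacobian / pushforward for $\mu$ and normalize.
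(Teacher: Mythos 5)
Your proposal is essentially correct and lands on the same three pillars as the paper: the orbit invariant $\tr(g g^\dagger)=\cosh(r\Lambda)$ forces the stated form and reduces the problem to two real degrees of freedom; the $U(1)$-torus action by conjugation rotates $\Theta$ while fixing $\mu$, giving uniformity and independence of $\Theta$; and an Archimedes-type projection argument gives the law of $\mu$. The one place where the paper is appreciably slicker than what you sketch is the identification of the law of $\mu$. You frame this as needing either a Jacobian computation in spherical coordinates on $S^2 \to [-\Lambda,\Lambda]$, $k\mapsto\mu(k)$, or a Duistermaat--Heckman heuristic, and you speak of a ``curved analogue of Archimedes' cylinder projection.'' The paper sidesteps any curved statement: it observes that $g g^\dagger = k_1\,\diag(e^{r\Lambda}, e^{-r\Lambda})\,k_1^\dagger$ with $k_1$ Haar-distributed, so $g g^\dagger$ ranges over the set of Hermitian matrices with fixed trace and determinant --- a genuine round $S^2$ sitting inside the $3$-dimensional affine space of $2\times 2$ Hermitian matrices with prescribed trace, whose principal diameter is $[e^{-r\Lambda}, e^{r\Lambda}]$. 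The $(1,1)$-entry of $g g^\dagger$, which is exactly $e^{r\mu}$, is a linear coordinate on that sphere, so the flat Archimedes theorem applies verbatim and gives that $e^{r\mu}$ is uniform on $[e^{-r\Lambda},e^{r\Lambda}]$; the exponential density of $\mu$ then falls out of the elementary change of variables $u=e^{rx}$, $du = re^{rx}\,dx$. Your option (a) would reproduce this answer but with more bookkeeping (and a potential headache at the poles $\mu=\pm\Lambda$ that the paper's argument avoids entirely), while option (b), as stated, is a heuristic rather than a proof. So: right strategy, but you are missing the algebraic reduction through $g g^\dagger$ that makes the argument a one-liner once flat Archimedes is in hand.
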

In fact, the law of $\mu$ is exactly encoded by the Harish-Chandra spherical function, but we will prefer an elementary derivation based on the following result, which dates back to Archimedes' work ``On the sphere and cylinder". We provide the proof for the sake of completeness.

\begin{thm}[Archimedes' Theorem]
\label{thm:archimedes}
Consider the unit sphere $S^2 \subset \R^3$. Then:
\begin{itemize}
\item (Global version): If the sphere is inscribed inside a cylinder, the sphere has exactly the same surface area as the lateral side of the cylinder.
\item (Local version): Considering the uniform measure on the sphere, the image measure through the projection along any axis is uniform on $[-1, 1]$.
\end{itemize}
\end{thm}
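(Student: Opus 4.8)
The plan is to prove both assertions at once via the classical ``hat-box'' computation: one exhibits an explicit area-preserving map from the sphere onto the lateral surface of the circumscribed cylinder, and everything follows by integrating. Concretely, parametrize $S^2 \setminus \{N,S\}$ (i.e.\ the sphere minus its two poles, a set of surface measure zero) by the height $z \in (-1,1)$ and the azimuthal angle $\theta \in [0,2\pi)$ through
$$
 r(z,\theta) = \left( \sqrt{1-z^2}\,\cos\theta, \ \sqrt{1-z^2}\,\sin\theta, \ z \right) \ .
$$
A direct computation of the cross product $\partial_z r \wedge \partial_\theta r$ gives a vector of Euclidean norm identically equal to $1$, so the surface-area element is simply $dA = dz\, d\theta$. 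Equivalently, the radial projection onto the cylinder $\{(u,v,z)\ |\ u^2+v^2=1,\ -1\le z\le 1\}$, namely $\Phi(x,y,z) = \big(x/\sqrt{x^2+y^2},\, y/\sqrt{x^2+y^2},\, z\big)$, is area-preserving. This single Jacobian identity is the whole content of Archimedes' theorem.

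From $dA = dz\,d\theta$ the \emph{global version} is immediate: the portion of $S^2$ with $z \in [a,b]$ has area $\int_a^b \int_0^{2\pi} dz\,d\theta = 2\pi(b-a)$, which is exactly the area of the corresponding band of the cylinder (circumference $2\pi$, height $b-a$); taking $a=-1$, $b=1$ gives total area $4\pi$ on both sides.

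For the \emph{local version}, the normalized uniform measure on $S^2$ is $\tfrac{1}{4\pi}\,dA = \tfrac{1}{4\pi}\,dz\,d\theta$, and its image under the height map $(z,\theta)\mapsto z$ is obtained by integrating out $\theta$, yielding $\tfrac{1}{4\pi}\cdot 2\pi\, dz = \tfrac12 \mathds{1}_{[-1,1]}(z)\,dz$, the uniform law on $[-1,1]$. Since the uniform measure on $S^2$ is invariant under all of $SO(3)$, the same conclusion holds for the projection along an arbitrary axis, after rotating that axis onto the $z$-axis.

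The only point deserving a word of care is that the chart omits the two poles; as these form a null set for the surface measure, they affect neither the area count nor the pushforward. There is no genuine obstacle here — the statement is elementary — so the ``hard part'' is purely expository: presenting the one-line norm-of-the-cross-product computation cleanly and noting the $SO(3)$-invariance that upgrades the result from the $z$-axis to any axis.
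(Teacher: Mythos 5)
Your proof is correct, but it takes the route the paper explicitly declines. You establish the area-preserving ``hat-box'' map $\Phi$ by computing $\|\partial_z r \wedge \partial_\theta r\| \equiv 1$, so $dA = dz\,d\theta$, then read off both the global area statement and the local pushforward from that single Jacobian identity, invoking $SO(3)$-invariance to pass to an arbitrary axis. The paper instead remarks that ``one can perform a Jacobian computation but we prefer the following probabilistic derivation'': a uniform point on $S^2$ is realized as a normalized standard Gaussian vector $(\Nc_1,\Nc_2,\Nc_3)/\|\Nc\|$, so the height coordinate is $X = \Nc_1/\sqrt{\Nc_1^2+\Nc_2^2+\Nc_3^2}$; writing $X^2 = \gamma_{1/2}/(\gamma_{1/2}+\gamma_1)$ and using the Beta--Gamma algebra gives $X^2 \sim \mathrm{Beta}(\tfrac12,1)$, hence $|X|$ is uniform on $[0,1]$ and, by the symmetry of the sign, $X$ is uniform on $[-1,1]$; the global version is then deduced by integration. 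The two approaches differ in flavor and in what they generalize to: your cross-product computation is elementary, self-contained, and geometrically transparent, but it is special to the sphere in $\R^3$ (in higher dimensions the area element is \emph{not} $dz\,d\theta_1\cdots d\theta_{n-2}$ and Archimedes' statement fails). The paper's argument makes visible \emph{why} dimension $3$ is special — it is the one case where $\mathrm{Beta}(\tfrac{n-1}2,1)$ pushes forward to a uniform law — and extends painlessly to give the Beta law of the height coordinate on $S^{n-1}$ for all $n$, which is the spirit of a paper built around orbital measures and spherical harmonic analysis. Both proofs handle the choice of axis via rotation invariance (you cite $SO(3)$-invariance of the uniform measure; the paper relies on the rotation invariance of the Gaussian), so neither has a gap there.
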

\begin{proof}
The global version is naturally deduced from the local version by integration along the axis. One can also invoke the nowadays known area formulas. This is almost Archimedes' original statement as he included the bases of the cylinder, leading him to the celebrated formulation that ``the surface of the cylinder is half as large again as the surface of the sphere" \cite{H02}.

For the local version, one can perform a Jacobian computation but we prefer the following probabilistic derivation. It generalizes easily to any dimension, showing that projections along an axis are Beta distributions. Now, because of the rotation invariance of the Gaussian distribution, a uniform random variable on the unit sphere is obtained by normalizing a standard Gaussian vector $\left( \Nc_1, \Nc_2, \Nc_3 \right)$. Therefore, the measure of interest is the law of 
$$ X := \frac{\Nc_1}{\sqrt{\Nc_1^2 + \Nc_2^2 + \Nc_3^2}} \ .$$
We need to show that:
$$ \P\left( X \in dx \right) = \mathds{1}_{[-1, 1]}(x) \frac{dx}{2} \ .$$
We have that $\varepsilon = \sgn(X)$ is a Bernoulli random variable and $|X|$ is independent of $\varepsilon$, by symmetry of the Gaussian distribution. Therefore, we write:
$$ X^2 = \frac{\Nc_1^2}{\Nc_1^2 + \Nc_2^2 + \Nc_3^2} = \frac{\gamma_\half}{\gamma_\half + \gamma_1} \ ,$$
where the $\gamma_k$'s are independent Gamma variables with parameter $k$. Because of the Beta-Gamma algebra identities, $X^2$ has a Beta distribution with parameters $(\half, 1)$. As such, $\P( X^2 \in dx ) = \mathds{1}_{[0, 1]} \frac{dx}{2 x^\half}$, which implies by a change of variables that $\P( |X| \in dx ) = \mathds{1}_{[0, 1]}(x) dx$. The result follows.
\end{proof}

\begin{proof}[Proof of Proposition \ref{proposition:uniformDressing}]
Using notation \eqref{def:functionsHEF}, we have:
$$ e^{r\Lambda} + e^{-r\Lambda}
   =
   \Tr( g g^\dagger )
   =
   e^{r H} + e^{-r H} + 4 r^2 |F|^2 \ .
$$
The expression \eqref{eq:uniformDressing} then follows, by setting $H=\mu$ and $2r F = e^{i\Theta} |2r F|$. Uniqueness is obvious. Let us now compute the joint distribution of $\Theta$ and $\mu$.

We have that for any $\theta \in \R$, 
$ t = \begin{pmatrix}
   e^{i \theta}  & 0\\
   0 & e^{-i \theta}
   \end{pmatrix} \in SU_2$, 
$$ t g t^{-1}= 
   \begin{pmatrix}
   e^{\half r \mu}  & 0\\
   e^{i(\Theta-2\theta)} \sqrt{e^{r\Lambda}+e^{-r\Lambda} - e^{r \mu} - e^{-r \mu}}
   & e^{-\half r \mu}
   \end{pmatrix} 
$$
still belongs to the orbit. By definition of the uniform measure on the dressing orbit $\Oc_r(\Lambda)$, $t g t^{-1}$ is also distributed according to the uniform measure on $\Oc_r(\Lambda)$. As such for all $\theta \in \R$, we have the equality in law
 $$ \left( \mu, \Theta \right) \eqlaw
    \left( \mu, \Theta - 2 \theta \ \modulo \ 2\pi \right) \ .$$
Necessarily $\Theta$ is uniform on $[0, 2\pi]$ and is independent of $\mu$. In order to track the distribution of $\mu$, let us write:
$$ g = k_1
   \begin{pmatrix}
   e^{\half r \Lambda}  & 0\\
   0 & e^{-\half r \Lambda}
   \end{pmatrix} 
   k_2
$$
with $k_1$ Haar distributed on $SU_2$, and $k_2 \in SU_2$ is chosen in such a way that $g \in NA$. Thus:
\begin{align*}
g g^\dagger
 = & k_1
   \begin{pmatrix}
   e^{r \Lambda}  & 0\\
   0 & e^{-r \Lambda}
   \end{pmatrix} 
   k_1^\dagger\\
 = &
   \begin{pmatrix}
   e^{r \mu}
   & e^{-i\Theta} e^{\half r \mu}\sqrt{e^{r\Lambda}+e^{-r\Lambda} - e^{r \mu} - e^{-r \mu}}\\
     e^{ i\Theta} e^{\half r \mu}\sqrt{e^{r\Lambda}+e^{-r\Lambda} - e^{r \mu} - e^{-r \mu}}
   & \left( e^{r\Lambda}+e^{-r\Lambda} - e^{r \mu} - e^{-r \mu} \right) + e^{-r \mu}
   \end{pmatrix} \ ,
\end{align*}
and therefore $e^{r\mu}$ is the top-left coefficient of a Hermitian matrix, whose spectrum is $\{ e^{-r \Lambda}, e^{r \Lambda} \}$, and whose eigenvectors are Haar distributed. The space of such matrices is a sphere with principal diameter $[e^{-r \Lambda}, e^{r \Lambda}]$, and the induced measure is uniform. Hence, by Archimedes' Theorem \ref{thm:archimedes}, $e^{r\mu}$ is uniform on $[e^{-r \Lambda}, e^{r \Lambda}]$.

We conclude by the following computation. For all bounded measurable function $\varphi: \R \rightarrow \R$, the last paragraph yields:
\begin{align*}
    \E\left( \varphi(\mu) \right)
= & \int_{e^{-r \Lambda}}^{e^{r \Lambda}} \frac{du}{e^{r \Lambda}- e^{-r \Lambda}}
    \varphi\left( \frac{1}{r} \log u \right) \\
= & \int_{-\Lambda}^{\Lambda} \frac{r e^{rx} dx}{e^{r \Lambda}- e^{-r \Lambda}}
    \varphi\left( x \right) \ .
\end{align*}
\end{proof}

\subsection{Proof of Eq. (\ref{eq:oneOrbit}): One orbit}
Let $\Lambda \in \R_+^*$. $V^q(\Lambda^\hbar)$ has basis
$$ \left( e_{\hbar k} \ ; \ k = -\lfloor\Lambda/\hbar\rfloor, -\lfloor\Lambda/\hbar\rfloor+2, \cdots, \lfloor\Lambda/\hbar\rfloor-2, \lfloor\Lambda/\hbar\rfloor \right) \ .$$
Because of the PBW basis theorem, it suffices to prove the result when $\Fc = E^a F^b H^c$ for natural numbers $a$, $b$ and $c$. Thanks to the structure of the representation given in \eqref{eq:repAction}, we have that:
$$ \Fc e_{\hbar k} \in \C e_{\hbar k+2\hbar(a-b)} \ .$$
As such, we have that $\langle \Fc e_{\hbar k}, e_{\hbar k}^* \rangle = 0$ if $a \neq b$. Upon computing the coefficients, we have as $\hbar \rightarrow 0$:
$$ \langle q^{-H} \Fc e_{\hbar k}, e_{\hbar k}^* \rangle
 = \left( 1 + o(1) \right) \delta_{a,b} \ 
   e^{r(\hbar k)} \
   (\hbar k)^c
   \frac{1}{(2r)^{2a}}
   \left| e^{\kbar\Lambda}+e^{-\kbar\Lambda}
         -e^{\kbar\hbar k}-e^{-\kbar\hbar k} \right|^a\ ,$$
excepted for a bounded number of indices $k$, and each of these terms is bounded. As such:
\begin{align*}
    \Tr_{V^q(\Lambda^\hbar)}\left( q^{-H} \Fc \right)
= & \sum_{ \substack{k=-\lfloor\Lambda/\hbar\rfloor \\ \lfloor\Lambda/\hbar\rfloor - k \text{ even} } }^{\lfloor\Lambda/\hbar\rfloor}
    \langle q^{-H} \Fc e_{\hbar k}, e_{\hbar k}^* \rangle\\
= & \ \Oc(1) +
    \sum_{ \substack{k=-\lfloor\Lambda/\hbar\rfloor \\ \lfloor\Lambda/\hbar\rfloor - k \text{ even} } }^{\lfloor\Lambda/\hbar\rfloor}
	\delta_{a,b}    
    \left( 1 + o(1) \right)
    \ e^{r(\hbar k)} \
    (\hbar k)^c \left| \frac{e^{\kbar\Lambda}+e^{-\kbar\Lambda}-e^{\kbar\hbar k}-e^{-\kbar\hbar k}}{(2r)^2} \right|^a \\
= & \ \Oc(1) +
    \delta_{a,b} \frac{1+o(1)}{\hbar} \int_{-\Lambda}^{\Lambda} e^{rx} \ x^c \ 
    \left| \frac{e^{\kbar\Lambda}+e^{-\kbar \Lambda}-e^{\kbar x}-e^{-\kbar x} }{(2r)^2} \right|^a dx \ ,
\end{align*}
where we recognized a Riemann sum at the last step. Therefore, if we divide by 
$$ \Tr_{V^q(\Lambda^\hbar)}\left( q^{-H} \right)
   =
   \Oc(1) +
   \frac{1+o(1)}{\hbar}
   \frac{ e^{r \Lambda} - e^{-r \Lambda} }{r} \ ,
$$
we obtain:
\begin{align*}
    \frac{ \Tr_{V^q(\Lambda^\hbar)}\left( q^{-H} \Fc \right) }
         { \Tr_{V^q(\Lambda^\hbar)}\left( q^{-H} \right) }
= & \ \Oc(\hbar) +
    \delta_{a,b} \left( 1+o(1) \right) \int_{-\Lambda}^{\Lambda} 
    \frac{ r e^{rx} }{ e^{r \Lambda} - e^{-r \Lambda} } \ x^c \ 
    \left| \frac{ e^{\kbar\Lambda}+e^{-\kbar \Lambda}-e^{\kbar x}-e^{-\kbar x} }{(2r)^2}\right|^a dx \ ,
\end{align*}

Now, thanks to Proposition \ref{proposition:uniformDressing}, any element $g$ which is uniform in $\Oc_\kbar( \Lambda ) = SU_2 \cdot \Lambda$ can be written:
$$ g = \left( 
       \begin{pmatrix}
                e^{\half \kbar H} & 0 \\
        2 \kbar F                 & e^{-\half \kbar H}
       \end{pmatrix}
       ,
       \begin{pmatrix}
                e^{-\half \kbar H} & 2 \kbar E \\
                0                  & e^{\half \kbar H}
       \end{pmatrix}
       \right)
$$
where $\overline{ E(g) } = F(g) = \frac{1}{2r} e^{i \Theta} \sqrt{e^{\kbar\Lambda}+e^{-\kbar \Lambda}-e^{\kbar H(g)}-e^{-\kbar H(g)}}$, $\Theta$ is uniform on $[0, 2\pi]$ and $H(g)$ follows the prescribed distribution. As such:
\begin{align*} 
\frac{ \Tr_{V^q(\Lambda^\hbar)}\left( q^{-H} \Fc \right) }
        { \Tr_{V^q(\Lambda^\hbar)}\left( q^{-H} \right) }
 & = \Oc(\hbar) + \left( 1 + o(1) \right) \E\left( E(g)^a F(g)^b H(g)^c \right) \\
 & = o(1) + \E\left( f(g)  \right) \ .
\end{align*}
 This concludes the proof of Eq. \eqref{eq:oneOrbit} for one orbit. Notice that the proof holds for all $r > 0$, uniformly on compact intervals.

\subsection{Proof of Eq. (\ref{eq:twoOrbits}): Convolution of two orbits}
Sweedler's notation for the coproduct $\Delta_r$ on $\Uc_q^\hbar(\slfrak_2)$ is written:
$$ \Delta_r \Fc = \sum_{(\Fc)} \Fc_1 \otimes \Fc_2 \ . $$
Hence, since $q^{-H}$ is group-like and $\Delta_r$ is a morphism:
$$ \Delta_r \left( q^{-H} \Fc \right)
 = \sum_{(\Fc)} \left( q^{-H} \Fc_1 \right) \otimes \left( q^{-H} \Fc_2 \right) \ . $$
Upon considering $\left( e^i_{\hbar k} \right)_k$ as the basis of $V^q(\Lambda_i^\hbar)$, for $i=1,2$, this gives:
\begin{align*}
    \Tr_{V^q(\Lambda_1^\hbar) \otimes V^q(\Lambda_2^\hbar) }\left( q^{-H} \Fc \right)
= & \sum_{i,j} \langle \Delta_r q^{-H} \Fc \cdot e^1_{\hbar i} \otimes e^2_{\hbar j}, \left( e^1_{\hbar i} \otimes e^2_{\hbar j} \right)^* \rangle\\
= & \sum_{(\Fc)} \sum_{i,j} \langle q^{-H} \Fc_1 e^1_{\hbar i}, \left( e^1_{\hbar i} \right)^* \rangle \langle q^{-H} \Fc_2 \cdot e^2_{\hbar j}, \left( e^2_{\hbar j} \right)^* \rangle\\
= & \sum_{(\Fc)} \Tr_{V^q(\Lambda_1^\hbar)}(q^{-H} \Fc_1)
             \ \Tr_{V^q(\Lambda_2^\hbar)}(q^{-H} \Fc_2) \ .
\end{align*}
Again, because $q^{-H}$ is group-like, we have:
$$ \Tr_{V^q(\Lambda_1^\hbar) \otimes V^q(\Lambda_2^\hbar)}( q^{-H} )
   =
   \Tr_{V^q(\Lambda_1^\hbar)}( q^{-H} ) \Tr_{V^q(\Lambda_2^\hbar)}( q^{-H} ) \ ,
$$
and upon dividing by that quantity, we obtain the equality:
$$ 
  \frac{
  \Tr_{V^q(\Lambda_1^\hbar) \otimes V^q(\Lambda_2^\hbar) }\left( q^{-H} \Fc \right)
  }{
  \Tr_{V^q(\Lambda_1^\hbar) \otimes V^q(\Lambda_2^\hbar)}( q^{-H} )
  }
= \sum_{(\Fc)}
  \frac{
  \Tr_{V^q(\Lambda_1^\hbar)}\left( q^{-H} \Fc_1 \right)
  }{
  \Tr_{V^q(\Lambda_1^\hbar)}( q^{-H} )
  }
  \frac{
  \Tr_{V^q(\Lambda_2^\hbar)}\left( q^{-H} \Fc_2 \right)
  }{
  \Tr_{V^q(\Lambda_2^\hbar)}( q^{-H} )
  } \ .
$$
We are ready to invoke the semi-classical limit Eq. \eqref{eq:oneOrbit} for one orbit. Let $f_j := \Fc_j \modulo \hbar$ for $j=1,2$. Also consider a random variable  $x$ uniformly distributed on $\Oc_r(\Lambda_1)$ and an independent random variable $y$ uniformly distributed on $\Oc_r(\Lambda_2)$. We have, as $\hbar \rightarrow 0$:
\begin{align*}
   \frac{
  \Tr_{V^q(\Lambda_1^\hbar) \otimes V^q(\Lambda_2^\hbar) }\left( q^{-H} \Fc \right)
  }{
  \Tr_{V^q(\Lambda_1^\hbar) \otimes V^q(\Lambda_2^\hbar)}( q^{-H} )
  }
& =  o(1) + \sum_{(\Fc)} \E \left( f_1(x) \right) \E \left( f_2(y) \right) \\
& =  o(1) + \E \left( \sum_{(\Fc)} f_1( x ) f_2( y ) \right) \ .
\end{align*}
Now, recall from Remark \ref{rmk:DeltaNotQuantum} that the coproduct $\Delta_r$ on $\Uc_q^\hbar(\slfrak_2)$ has nothing inherently quantum! It is the coproduct associated to the dual Poisson-Lie group $(SL_2^*)_r$ and as such, by definition:
$$ \sum_{(\Fc)} f_1( x ) f_2( y )
  = f\left( x *_r y  \right) \ ,
$$
where $f = \Fc \modulo \hbar$. The result \eqref{eq:twoOrbits} follows.

\subsection{Proof of Eq. (\ref{eq:CDcrystals}): Crystals in the infinite curvature limit}
The bottom arrow is a specialization of the semi-classical limit for two orbits given in Eq. \eqref{eq:twoOrbits}. 

The top arrow is the convergence of a sum indexed by $\Bc\left(\Lambda_1^\hbar \right) \times \Bc\left( \Lambda_2^\hbar \right)$. Thanks to the identification \eqref{eq:BcIdentification}, this is a sum on a mesh which discretizes the rectangle $[-\Lambda_1, \Lambda_1] \times [-\Lambda_2, \Lambda_2]$. The statement is thus the convergence of a Riemann sum.

The left arrow is essentially a restatement of what crystal bases are. 
Indeed, as $q\to0$, the only remaining basis vectors of the representation $V^q\left(\Lambda_1^\hbar\right)\otimes V^q\left(\Lambda^\hbar_2\right)$ are monomial and thus identify with crystal elements of the form $b_1\otimes b_2\in  \Bc\left(\Lambda_1^\hbar \right ) \otimes \Bc \left( \Lambda_2^\hbar \right) $. The convergence follows, since as $r\to\infty$, $\Lambda^{r, \hbar}$ acts as the highest weight $\hw$ while $H$ acts as the weight $\wt$ in any representation.

Only the right arrow remains to be detailed. To that endeavor, we start by invoking Proposition \ref{proposition:uniformDressing} giving the explicit description of uniform elements $g_j \in \Oc_r(\Lambda_j)$, $j=1,2$, on two dressing orbits:
$$  g_j =
   \begin{pmatrix}
   e^{\half r \mu_i}  & 0\\
   e^{i\Theta_j} \sqrt{e^{r\Lambda_j}+e^{-r\Lambda_j} - e^{r \mu_j} - e^{-r \mu_j}}
   & e^{-\half r \mu_j}
   \end{pmatrix} \ .$$
The product is:
$$  g_1 g_2 =
   \begin{pmatrix}
   e^{\half r (\mu_1+\mu_2)}  & 0\\
   2 r F(g_1 g_2)
   & e^{-\half r (\mu_1+\mu_2)}
   \end{pmatrix} \ ,$$
with
\begin{align*}
   2r F(g_1 g_2) = & \ e^{i\Theta_1} \sqrt{e^{r\Lambda_1}+e^{-r\Lambda_1} - e^{r \mu_1} - e^{-r \mu_1}} e^{\half r \mu_2} \\
                   & \quad + e^{i\Theta_2} e^{-\half r \mu_1} \sqrt{e^{r\Lambda_2}+e^{-r\Lambda_2} - e^{r \mu_2} - e^{-r \mu_2}} \ .
\end{align*}
When $r \rightarrow \infty$, as soon as $|\mu_1|<\Lambda_1$, $|\mu_2|<\Lambda_2$, and $e^{i\Theta_1}\not=-e^{i\Theta_2}$, the tropicalization trick \eqref{eq:tropTrick} yields the estimate:
\begin{align}
\label{eq:tropEstimate}
    4 r^2 \left| F(g_1 g_2) \right|^2 
= & \left| e^{i\Theta_1} e^{\half r (\Lambda_1 + \mu_2  + o(1))}
         + e^{i\Theta_2} e^{\half r (-\mu_1 + \Lambda_2 + o(1))} \right|^2 
\nonumber
\\
= & \exp\left[ r \max\left( \Lambda_1 + \mu_2, \ -\mu_1 + \Lambda_2 \right) + o(r) \right] \ .
\end{align}

Now, we compute the scalar $\Lambda = \Lambda(g_1 g_2) \in \R$ such that $g_1 g_2$ belongs to the orbit $\Oc_r\left( \Lambda \right)$. We have:
\begin{align*}
   \Lambda (g_1 g_2)
& =  \frac{1}{r} \Argcosh \circ \tr\left[ (g_1 g_2) (g_1 g_2)^\dagger \right]\\
& =  \frac{1}{r} \Argcosh \left( \cosh\left( r(\mu_1 + \mu_2) \right)
                + 2 r^2 \left| F(g_1 g_2) \right|^2 \right) \\
& \overset{r \rightarrow \infty}{\longrightarrow}  \max\left( \Lambda_1 + \mu_2, \ -\mu_1 + \Lambda_2 \right) \ ,
\end{align*}
thanks to the estimate \eqref{eq:tropEstimate}, the asymptotic $\Argcosh(x) = \log 2x+o(1)$ as $x\to\infty$ and the tropicalization trick (\ref{eq:tropTrick}) again.
This  limit holds for (Lebesgue) almost every $\mu_1 \in [-\Lambda_1, \Lambda_1]$, $\mu_2 \in [-\Lambda_2, \Lambda_2]$ and  $\left( \Theta_1, \Theta_2 \right) \in [0, 2\pi]^2$.

We thus conclude, that for independent random variables $g_1$ and $g_2$,  uniformly distributed on $\Oc_r(\Lambda_1)$ and $\Oc_r(\Lambda_2)$ respectively, we have:
\begin{align*}
   & \frac{\E \left(  e^{-rH(g_1g_2)} \varphi\circ\Lambda(g_1g_2) \, \psi\circ H (g_1g_2) \right)}{\E \left(  e^{-rH(g_1g_2)} \right)  }   \\
 = & \frac{1}{ 4 \Lambda_1 \Lambda_2 }
          \int_{-\Lambda_1}^{\Lambda_1} \int_{-\Lambda_2}^{\Lambda_2} d\mu_1 d\mu_2
         \int_{0}^{2\pi} \int_{0}^{2\pi}\frac{d \Theta_1}{2\pi} \frac{d \Theta_2}{2\pi} \varphi \circ \Lambda (g_1 g_2) \ \psi\left( \mu_1 + \mu_2 \right) \\
 \overset{r \rightarrow \infty}{\longrightarrow} & \frac{1}{ 4 \Lambda_1 \Lambda_2 }
          \int_{-\Lambda_1}^{\Lambda_1} \int_{-\Lambda_2}^{\Lambda_2} d\mu_1 d\mu_2
            \ \varphi \circ \max\left( \Lambda_1 + \mu_2, \ -\mu_1 + \Lambda_2 \right) \ \psi\left( \mu_1 + \mu_2 \right) \ .
\end{align*}
In the above computation, the equality is obtained thanks to Proposition~\ref{proposition:uniformDressing} and the limit follows from the dominated convergence theorem.

\section{Dynamical semi-classical limits: Proof of Main Theorem \ref{thm:main}}
\label{section:dynamics}

This section is devoted to the proof of Theorem~\ref{thm:main}. Let us start by recalling what is already known among the convergences $\hbar \rightarrow 0$ or $r \rightarrow 0$ or $\infty$.

Theorem~\ref{thm:Biane_CV} of Biane gives the convergence in the semi-classical limit $\hbar\to0$ of the non-commutative process $\left( x^{0,\hbar}_t \ ; \ t\geq 0 \right)$ to the flat Brownian motion $\left( x^0_t \ ; \ t\geq 0 \right)$, as well as the convergence of the Casimir process $\left( \Lambda_t^{0,\hbar} \ ; \ t\geq0 \right)$ to the Bessel-$3$ process. The convergence of the dynamic $\left( g_t^{r,\hbar} \ ; \ t\geq0 \right)$ on $\Uc_q^\hbar(\slfrak_2)$ to Pitman's Theorem~\ref{thm:PitmanDiscrete} is also a rephrasing of Biane's Theorem~\ref{thm:Biane_Pitman} as the $q\to0$ limit corresponds to our curvature $r$ going to infinity. The $r\to0$ regime of  $\left( g_t^{r,\hbar} \ ; \ t\geq0 \right)$ is obviously given by the degeneration ``$q\to1$" of $\Uc_q^\hbar(\slfrak_2)$ to $\Uc^\hbar(\slfrak_2)$. In the semi-classical world, the convergence in both regimes $r\to0$ and $r\to\infty$ is Theorem~\ref{thm:BJ} by Bougerol and Jeulin. 

As already mentioned, since extracting their result is not trivial, we provide a complete proof of the convergences in subsection~\ref{subsection:BJ}. In subsection~\ref{subsection:qLR}, we compute the law of the dynamic $\left( \Lambda^{r,\hbar}_t \ ; \ t\geq0 \right)$ induced by the Casimir operator for generic $r>0$. Subsection \ref{subsection:qPitman} is the technical part where we provide the proof of the convergence of $\left( \left( g_t^{r,\hbar}, \Lambda^{r,\hbar}_t \right) \ ; \ t\geq0 \right)$ to $\left( \left( g_t^r, \Lambda^r_t \right) \ ; \ t\geq0 \right)$ as $\hbar \rightarrow 0$. Finally, we conclude in subsection \ref{subsection:rIndependence} with a conceptual explanation of why the law of $\Lambda^{r, \hbar}$ is independent from $r>0$.

\subsection{The proof of Bougerol and Jeulin's Theorem~\ref{thm:BJ}}
\label{subsection:BJ}

The fact that the distribution of $\Lambda^r$ does not depend on $r>0$ is rather subtle. As announced earlier, we leave the matter for subsection \ref{subsection:rIndependence}, where we will benefit from a more global vision of the problem.

For the first expression given in Eq. \eqref{eq:BJdynamic}, consider Eq. \eqref{eq:BJ_process} and write:
$$ \cosh\left(r \Lambda^r_t \right) = \tr g_t g_t^\dagger = \cosh(r X_t) + \half r^2 \left|e^{\half r X_t} \int_0^t e^{-r X_s} (dY_s + i dZ_s) \right|^{2} \ .$$
Taking the inverse of $\cosh$ yields the result.

Now, let us prove the existence of the limits $r \rightarrow 0$ and $r \rightarrow \infty$ for $\Lambda^r$, which are given in Eq. \eqref{eq:BJlimits}. We shall use the notation $o(1)$ to denote a sequence of random variables going to zero almost surely, and $o_\P(1)$ if this convergence is in probability.

The computation of $\Lambda^{r=0}$ goes via the following limit argument. For fixed $t>0$, we start with the estimate:
$$ 
   \int_0^t e^{-r X_s} (dY_s + i dZ_s) = Y_t + i Z_t + o_\P(1) \ ,
$$
which is an easy consequence of computing the $L^2(\Omega, \P)$ norm thanks to the Itô isometry property. Then, via Taylor expansions:
\begin{align*}
    \cosh \left( r \Lambda_t^{r} \right)
& = \cosh(r X_t) + \half r^2 \left|e^{\half r X_t} \int_0^t e^{-r X_s} (dY_s + i dZ_s) \right|^{2} \\
& = 1 + \half r^2 X_t^2 + r^2 o(1) + \half r^2 \left| \left( 1 + o(1) \right) \left( Y_t + i Z_t + o_\P(1) \right) \right|^{2} \\
& = 1 + \half r^2 \left( X_t^2 + Y_t^2 + Z_t^2 + o_\P(1) \right) .
\end{align*}
Since $\Argcosh(1+\half x) = \sqrt{x} + \Oc(x)$ for $x \rightarrow 0$, we have as $r \rightarrow 0$:
\begin{align*}
    \Lambda_t^{r} 
& = \frac{1}{r} \Argcosh\left[ 1 + \half r^2 \left( X_t^2 + Y_t^2 + Z_t^2 + o_\P(1) \right) \right] \\
& = \sqrt{X_t^2 + Y_t^2 + Z_t^2 + o_\P(1)} + o_\P(1) \\
& = \sqrt{X_t^2 + Y_t^2 + Z_t^2} + o_\P(1) \ .
\end{align*}
Therefore, we have the limit in probability for all $t>0$:
$$ \P - \lim_{r \rightarrow 0} \Lambda_t^{r} = \sqrt{X_t^2 + Y_t^2 + Z_t^2} \ ,$$
which yields the result for $\Lambda^{r=0}$.

For the computation of $\Lambda^{r=\infty}$, we fix $t>0$ and work conditionally to the trajectory $\left( X_s \ ; \ s \in \R_+ \right)$. By virtue of classical Wiener integration with respect to Brownian motion, there exists a complex standard Gaussian $\Nc^\C$ such that
$$ \int_0^t e^{-r X_s} (dY_s + i dZ_s) 
   \eqlaw
   \Nc^\C \ \sqrt{ \int_0^t e^{-2 r X_s} ds } \ .
$$
Indeed, $(Y, Z)$ is independent from $X$ and conditioning with respect to $\left( X_s \ ; \ 0 \leq s \leq t \right)$ allows to treat the integrand as a deterministic function. As such:
\begin{align*}
    \cosh \left( r \Lambda_t^{r} \right)
& = \cosh(r X_t) + \half r^2 \left| e^{\half r X_t} \int_0^t e^{-r X_s} (dY_s + i dZ_s) \right|^{2} \\
& = \half e^{r X_t} + \half e^{-r X_t} + \half r^2 |\Nc^\C|^2 e^{r X_t} \int_0^t e^{-2 r X_s} ds \ .
\end{align*}
Notice that the above quantity is a sum of positive terms and diverges to $\infty$ as $r \rightarrow \infty$, since either $X_t>0$ or $-X_t>0$ almost surely. Moreover as $x \rightarrow \infty$, we have $\Argcosh(x) = o(1) + \log 2x$, hence:
\begin{align*}
    \Lambda_t^{r} 
& = o\left( 1 \right) 
    +
    \frac{1}{r}
    \log \left( e^{r X_t} + e^{-r X_t} + r^2 |\Nc^\C|^2 e^{r X_t} \int_0^t e^{-2 r X_s} ds
         \right) \ .
\end{align*}

Now, the crux of the argument consists in invoking the Laplace method which can be seen as the continuous analogue of the tropicalization trick \eqref{eq:tropTrick}. It states that for all continuous functions $f$:
$$ \lim_{r \rightarrow \infty} 
   \frac{1}{r}\log \int_0^t e^{-r f(s)} ds
   =
   - \inf_{0 \leq s \leq t} f(s) \ .$$
In particular, for $f = X$, we have: 
\begin{align*}
    \Lambda_t^{r}
& = o\left( 1 \right) 
    +
    \frac{1}{r}
    \log \left( e^{r X_t} + e^{-r X_t} + r^2 |\Nc^\C|^2 e^{r \left( o(1) + X_t - 2 \inf_{0 \leq s \leq t } X_s \right) }
         \right) \ 
    ,
\end{align*}
Invoking the tropicalization trick \eqref{eq:tropTrick} again, we obtain:
\begin{align*}
    \Lambda_t^{r}
& = o(1) + \max\left( |X_t|, o(1) + X_t - 2 \inf_{0 \leq s \leq t} X_s \right) \ .
\end{align*}
Upon noticing the almost sure inequality $X_t - 2 \inf_{0 \leq s \leq t} X_s > |X_t|$, we have shown that there is an almost sure limit:
$$ \lim_{r \rightarrow \infty} \Lambda_t^r = X_t - 2 \inf_{0 \leq s \leq t} X_s \ ,$$
which implies the limit in probability given in the theorem.

\subsection{The Littlewood-Richardson dynamic}
\label{subsection:qLR}

Here we compute the distribution of the commutative process $\left( M_n(C^{r, \hbar}) \ ; \ n \geq 1 \right)$. From the facts that the $M_n$'s are algebra homomorphisms and that $C^{r, \hbar}$ lies in the center of $\Uc_q^\hbar\left(\slfrak_2\right)$, it is clear that $M_n(C^{r, \hbar})$ commutes with any $M_k(C^{r, \hbar})$ for $k\leq n$. Moreover, from its definition \eqref{def:Casimir}, $C^{r, \hbar}$ is Hermitian with respect to the involution $\dagger$. As such, one can simultaneously diagonalize the operators $\left( M_1(C^{r, \hbar}),\ldots,M_n(C^{r, \hbar}) \right)$. Recall that $C^{r, \hbar}$ acts on the irreducible representation $V^q\left( \Lambda^\hbar \right)$ as the constant $C^{r, \hbar}(\Lambda^\hbar)$ defined in Eq. \eqref{eq:casimirConstant}. Because of the rigidity of quantum groups, the Littlewood-Richardson rule is unchanged which will imply  that $\left( M_n(C^{r, \hbar}) \ ; \ n \geq 0 \right)$ follows the same dynamic regardless of the parameter $r > 0$. The following lemma is an analogue of Lemma~3.3 in \cite{B08}.

\begin{lemma}
Let $J=\{\lambda_1,\ldots,\lambda_n   \}\subset  \N$ with $\lambda_1=1$ and $| \lambda_{k+1}-\lambda_k | =1$.  There exists a subspace $V^q_{J}\subset {(\C^2)}^{\otimes n}$, isomorphic to the irreducible representation $V^q(\hbar\lambda_n)$ of dimension $\lambda_n+1$,  which is a common eigenspace of $M_1(C^{r, \hbar}),\ldots,M_n(C^{r, \hbar})$ with corresponding eigenvalues $C^{r, \hbar}(\hbar\lambda_1),\ldots, C^{r, \hbar}(\hbar\lambda_n)$.
\end{lemma}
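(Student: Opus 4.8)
The plan is to argue by induction on $n$, realizing the subspaces $V^q_J$ as the irreducible summands that appear when one iterates Clebsch--Gordan decomposition on $(\C^2)^{\otimes n}$, and crucially using that the fusion rules of $\Uc_q^\hbar(\slfrak_2)$ are the classical ones. For $n=1$ there is nothing to do: $M_1$ is the identity embedding of $\Uc_q^\hbar(\slfrak_2)$ as the first tensor factor and, under the standard representation $\rho_1$, one has $\C^2 \cong V^q(\hbar)$; since $C^{r,\hbar}$ is central it acts on $\C^2$ as the scalar $C^{r,\hbar}(\hbar)$ recorded in \eqref{eq:casimirConstant}, so $V^q_{\{1\}}:=\C^2$ works with $\lambda_1=1$.

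For the inductive step, assume the statement for $n-1$, giving a decomposition $(\C^2)^{\otimes(n-1)}=\bigoplus_{J'}V^q_{J'}$ over length-$(n-1)$ admissible paths $J'=\{\lambda_1,\dots,\lambda_{n-1}\}$, with each $V^q_{J'}$ a common eigenspace of $M_1(C^{r,\hbar}),\dots,M_{n-1}(C^{r,\hbar})$ and, as a module under the $M_{n-1}$-action, isomorphic to $V^q(\hbar\lambda_{n-1})$. Since $M_n=(M_{n-1}\otimes 1)\circ\Delta_r$ by \eqref{eq:def_measurement}, for $x\in\Uc_q^\hbar(\slfrak_2)$ and $v\otimes w\in V^q_{J'}\otimes\C^2\subset(\C^2)^{\otimes n}$ one has $M_n(x)(v\otimes w)=\sum_{(x)}M_{n-1}(x_1)v\otimes x_2 w$, which remains in $V^q_{J'}\otimes\C^2$ because $V^q_{J'}$ is $M_{n-1}(\Uc_q^\hbar(\slfrak_2))$-stable. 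Hence $V^q_{J'}\otimes\C^2$ is an $M_n(\Uc_q^\hbar(\slfrak_2))$-submodule, and it is precisely the tensor product module $V^q(\hbar\lambda_{n-1})\otimes V^q(\hbar)$ defined through $\Delta_r$.

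At this point I would invoke the Hopf algebra isomorphism $\Phi$ of \eqref{def:isom}, which identifies $\Uc_q^\hbar(\slfrak_2)$ with the Drinfeld--Jimbo algebra $\Uc_{q^\hbar}(\slfrak_2)$; since $q=e^{-r}\in(0,1)$ is not a root of unity, its finite-dimensional representation category is semisimple with the same fusion rules as classical $\slfrak_2$, so $V^q(\hbar\lambda)\otimes V^q(\hbar)\cong V^q(\hbar(\lambda+1))\oplus V^q(\hbar(\lambda-1))$ for $\lambda\geq 1$ (the last summand being $0$ when $\lambda=0$). Applying this to $V^q_{J'}\otimes\C^2$ yields the two subspaces $V^q_{J'\cup\{\lambda_{n-1}+1\}}$ and $V^q_{J'\cup\{\lambda_{n-1}-1\}}$, that is, exactly the spaces $V^q_J$ attached to the two length-$n$ paths extending $J'$; summing over all $J'$ exhausts $(\C^2)^{\otimes n}$. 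Finally, for the eigenvalues: for $k\leq n-1$, $M_k(C^{r,\hbar})$ acts on the first $k$ factors, commutes with the action on the remaining ones, preserves $V^q_{J'}\otimes\C^2$ and acts there as $C^{r,\hbar}(\hbar\lambda_k)$ by the induction hypothesis, hence as that scalar on each $V^q_J$; for $k=n$, $V^q_J\cong V^q(\hbar\lambda_n)$ as an $M_n$-module and $C^{r,\hbar}$ acts on $V^q(\hbar\lambda_n)$ as $C^{r,\hbar}(\hbar\lambda_n)$ by \eqref{eq:casimirConstant}. This closes the induction.

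The only non-formal point, and hence the main obstacle, is the \emph{rigidity} statement: one must be sure that moving from the standard presentation to $\Uc_q^\hbar(\slfrak_2)$ and specializing $q=e^{-r}$ with $\hbar>0$ changes neither the list of irreducibles occurring in $V^q(\hbar\lambda)\otimes V^q(\hbar)$ nor the semisimplicity of the category; this is exactly where the isomorphism $\Phi$ and the genericity of $q$ (no root of unity) enter, the coproduct being irrelevant here since $\Delta_r$ is unchanged as an abstract coproduct (Remark \ref{rmk:DeltaNotQuantum}). Everything else is bookkeeping of stable subspaces along the recursion $M_n=(M_{n-1}\otimes 1)\circ\Delta_r$.
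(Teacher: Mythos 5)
Your proof is correct and takes essentially the same route as the paper: base case $n=1$, then induction tensoring by the standard module, using the rigidity of the Littlewood--Richardson rule for $\Uc_q^\hbar(\slfrak_2)$, and noting that the $M_k(C^{r,\hbar})$ with $k\le n-1$ act only on the first $k$ legs. The only stylistic difference is that you invoke semisimplicity of the category (genericity of $q$) abstractly to produce the summands $V^q(\hbar(\lambda_{n-1}\pm1))$, whereas the paper writes them out explicitly via Clebsch--Gordan coefficients; both establish the same decomposition.
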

\begin{proof}
The proof is done by induction on $n$. For $n=1$, $V^q_J=V^q(\hbar)\approx \C^2$ is indeed an eigenspace of $M_1(C^{r, \hbar})=C^{r, \hbar}$ and the base case is proven.

Now, suppose the assertion is true for fixed $n\geq1$. Let $\lambda_1,\ldots, \lambda_{n+1}$ with $| \lambda_{k+1}-\lambda_k | =1$. Let $J'=\{\lambda_1,\ldots,\lambda_n   \}$. Because of the induction hypothesis, there exists a subspace $V^q_{J'} \subset V^q(\hbar)^{\otimes n}$ which is a common eigenspace of  $M_1(C^{r, \hbar}),\ldots,M_n(C^{r, \hbar})$ with eigenvalues $C^{r, \hbar}(\hbar \lambda_1),\ldots, C^{r, \hbar}(\hbar \lambda_n)$. Moreover $V^q_{J'}\simeq V^q(\hbar\lambda_n)$. Upon tensoring by $V^q(\hbar)$, we obtain $V^q_{J'} \otimes V^q(\hbar) \subset V^q(\hbar)^{\otimes (n+1)}$ which is still a common eigenspace for the previous operators and with exactly the same eigenvalues.

Here, we invoke the Littlewood-Richardson rule which governs the decomposition of tensor products into irreducibles. We have
\begin{align}
\label{eq:LRrule}
V^q_{J'} \otimes V^q(\hbar) \approx \,  & V^q(\hbar\lambda_n)\otimes V^q(\hbar) \approx V^q\left(\hbar(\lambda_{n}+1)\right) \oplus V^q\left(\hbar(\lambda_{n}-1)\right) ,
\end{align}
irrespective of the parameter $q = e^{-r}$, a property which is referred to as the {\it rigidity of quantum groups}.

On the irreducible representation $V^q\left(\hbar(\lambda_{n}+1)\right)$ (resp. $V^q(\hbar(\lambda_{n}-1))$), $M_{n+1}(C^{r, \hbar})$ acts as the constant $C^{r, \hbar}(\hbar(\lambda_n+ 1))$ (resp. $C^{r, \hbar}(\hbar(\lambda_n- 1))$). Moreover, one can construct $V^q(\hbar(\lambda_n + 1))$ and $V^q(\hbar(\lambda_n - 1))$ as follows (see \cite[p.~35]{klimyk-vilenkin}). Let 
$$ \left( e^{l}_{\hbar k} \ ; \ k=-l,-l+2,\ldots,l-2,l \right)$$ 
be the weight basis of $V^q(\hbar l)$. Then $V^q(\hbar(\lambda_n + 1))$ is spanned by the vectors $e^{\lambda_n+1}_{\hbar m}$ defined by
\begin{align*}
e^{\lambda_n+1}_{\hbar m} & :=  \operatorname{CGC}^\hbar_q(\lambda_n,1,\lambda_n+1;m-1,1,m) \ e^{\lambda_n}_{\hbar(m-1)} \otimes e^1_{\hbar} \\
& \quad \quad + \operatorname{CGC}^\hbar_q(\lambda_n,1,\lambda_n+1;m+1,-1,m) \ e^{\lambda_n}_{\hbar(m+1)} \otimes e^1_{-\hbar} \ ,
\end{align*}
for $m=-(\lambda_n+1),-\lambda_n+1,\ldots,\lambda_n-1,\lambda_n+1$. On the other hand, $V^q(\hbar(\lambda_n - 1))$ is spanned by the vectors $e^{\lambda_n-1}_{\hbar m}$ defined by
\begin{align*}
e^{\lambda_n-1}_{\hbar m} & := \operatorname{CGC}^\hbar_q(\lambda_n,1,\lambda_n-1;m-1,1,m) \ e^{\lambda_n}_{\hbar(m-1)} \otimes e^1_\hbar \\
&\quad\quad+ \operatorname{CGC}^\hbar_q(\lambda_n,1,\lambda_n-1;m+1,-1,m) \ e^{\lambda_n}_{\hbar(m+1)} \otimes e^1_{-\hbar} \ ,
\end{align*}
for $m=-(\lambda_n-1),-\lambda_n+3,\ldots,\lambda_n-3,\lambda_n-1$. Here the explicit coefficients
$\operatorname{CGC}^\hbar_q(\cdot \ ; \ \cdot)$ are the so-called Clebsch-Gordan coefficients. 

Since for $k\leq n$, the operator $M_k(C^{r, \hbar})$ acts only on the first $k$-th legs of the tensor product, $V^q(\hbar(\lambda_n + 1))$ and $V^q(\hbar(\lambda_n - 1))$ are still eigenspaces of $M_1(C^{r, \hbar}),\ldots,M_n(C^{r, \hbar})$, so the lemma is proved letting $V^q_J=V^q(\hbar(\lambda_n + 1))$ for $\lambda_{n+1}=\lambda_n+1$ and $V^q_J=V^q(\hbar(\lambda_n - 1))$ for $\lambda_{n+1}=\lambda_n-1$.
\end{proof}

Using the above lemma, for any polynomial $f$ in $n$ variables, the spectral theorem gives
$$
   \tau \left[ f\left(M_1(C^{r, \hbar}),\ldots,M_n(C^{r, \hbar})  \right)   \right]
 = \sum_{J=\{\lambda_1,\ldots,\lambda_n\}}
   f\left(C^{r, \hbar}(\hbar\lambda_1),\ldots,C^{r, \hbar}(\hbar\lambda_n)\right) \frac{1}{2^n} \Tr (\Pi_J) \ ,
$$
where $\Pi_J$ is the projector onto $V^q_J$ which has dimension $\lambda_n+1$. Thus $\Tr (\Pi_J)=\lambda_n+1$ and the probability of $(M_1(C^{r, \hbar}),\ldots,M_n(C^{r, \hbar}))$ having  the trajectory $(C^{r, \hbar}(\hbar\lambda_1),\ldots,C^{r, \hbar}(\hbar\lambda_n))$ is equal to 
$$
\frac{\lambda_n+1}{2^n} = \frac{\lambda_1+1}{2} \frac{\lambda_2+1}{2(\lambda_1+1)} \cdots \frac{\lambda_n+1}{2(\lambda_{n-1}+1)} \ .
$$
We deduce that the process $\left( M_n(C^{r, \hbar}) \ ; \ n \geq 1 \right)$ is a Markov chain on the state space $\left\{ C^{r, \hbar}(\hbar k) \ ; \ k\in\N \right\}$ with transitions given by
$$
p\left(C^{r, \hbar}(\hbar\lambda),C^{r, \hbar}(\hbar(\lambda+1))\right)=\frac{\lambda+2}{2(\lambda+1)}, \quad p\left(C^{r, \hbar}(\hbar\lambda),C^{r, \hbar}(\hbar(\lambda-1))\right)=\frac{\lambda}{2(\lambda+1)} \ .
$$
This shows that the dynamic induced by the Casimir operator has transitions which do not depend on $r$ and are given as in Pitman's Theorem~\ref{thm:PitmanDiscrete}. Consequently, using (\ref{eq:def_Lambda}), we have that the dynamic of the process $\left( \Lambda_n^{r,\hbar} \ ; \ n \geq 0\right)$ is independent of the curvature $r$ and is a Markov chain on highest weights $\hbar\N$ with transitions~\eqref{eq:LR_transitions}.

\medskip

\subsection{Semi-classical limit of quantum walks on \texorpdfstring{$\Uc_q^\hbar( \slfrak_2)$}{the quantum group} and Brownian motion on \texorpdfstring{$\left(SU_2^*\right)_\kbar$}{the dual Poisson Lie group}}
\label{subsection:qPitman}

Recall that $H$ is defined in a completion of $\Uc_q^\hbar( \slfrak_2)$ through $K=e^{-rH} \in \Uc_q^\hbar( \slfrak_2)$, and define $\beta = \Re \beta + i \Im \beta \in \Uc_q^\hbar( \slfrak_2)$ where
$$
   \Re \beta = 2\Re F =  F+F^\dagger=F+E\ , \quad
   \Im \beta = 2\Im F =  i(F^\dagger-F)= i(E-F) \ .
$$

Using the action on the irreducible representation $V^q(\hbar)\approx \C^2$ given by (\ref{eq:repAction}), the non-commutative self-adjoint random variables $H$, $\Re \beta$ and $\Im \beta$ are represented thanks to $(V^q(\hbar), \rho_\hbar)$ by the following matrices:
$$
\rho_\hbar(H) = \hbar \begin{pmatrix} 1 & 0 \\ 0 & -1 \end{pmatrix}, \quad
\rho_\hbar(\Re \beta) = \beta_r^\hbar\begin{pmatrix}  0 & 1 \\ 1 & 0 \end{pmatrix}, \quad
\rho_\hbar(\Im \beta) = \beta_r^\hbar\begin{pmatrix}  0 & i \\ -i & 0 \end{pmatrix} ,
$$
with $\beta_r^\hbar= \alpha_r^\hbar (e^{r\hbar}-e^{-r\hbar})\simeq \hbar$ as $\hbar\to0$. Accordingly, one sees that $H$, $\Re \beta$ and $\Im \beta$ are renormalized Pauli matrices. Since the state $\tau$ is the normalized trace on $\End(V^q(\hbar))$, their marginal distributions $\left( \Lc(H), \Lc(\Re F), \Lc(\Im F) \right)$ are given by their spectral measures:
$$
       \Lc(H) = \half \delta_{\hbar} + \half  \delta_{-\hbar} \ ,
 \quad \Lc(\Re F) = \half \delta_{\beta^\hbar_r} +\half \delta_{-\beta^\hbar_r} \ ,
 \quad \Lc(\Im F) = \half \delta_{\beta^\hbar_r} +\half \delta_{-\beta^\hbar_r} \ .
$$
As such, $H$ is distributed as $\hbar$ times a Bernoulli random variable and $\Re \beta$ and $\Im \beta$ are distributed as $\beta_r^\hbar$ times a Bernoulli random variable.

Now, let us adopt the following convenient but abusive notations:
\begin{align*}
M_n( \beta ) & := M_n( \Re \beta ) + i M_n( \Im \beta ) \ , \\
M_n( \Re \beta ) & :=  \sum_{k=1}^n 1^{\otimes k-1} \otimes \Re \beta \ , \\
M_n( \Im \beta ) & :=  \sum_{k=1}^n 1^{\otimes k-1} \otimes \Im \beta \ .
\end{align*}
This is an abuse of notation because the right-hand side has no reason to belong to the image of $M_n$. Moreover, it is different from the result obtained by applying the measurement operator $M_n$ to $\beta$, as $\beta$ is not primitive. We will nevertheless pretend that it is the case, since it will be convenient.  Furthermore, since $H$ is primitive, one has
$$
M_n(H)=\sum_{k=1}^n 1^{\otimes k-1} \otimes H.
$$
Since the random variables $\left( 1^{\otimes k-1} \otimes A \ ; \ k\geq1 \right)$, for each fixed $A\in \{H,\Re \beta,\Im \beta  \}$, are commuting and thus independent, each of the walks  $M_n(H)$, $M_n(\Re \beta)$, $M_n(\Im \beta)$ is a sum of independent Bernoulli random variables, and this triplet corresponds to the quantum Bernoulli walk of Biane appearing in Theorem~\ref{thm:Biane} but with a different normalization. As such, Donsker's invariance principle gives that each of these walks converges towards a Brownian motion. Moreover, using the commutation relations of \eqref{def:heteroQuantumGroup} defining the algebra $\Uc_q^\hbar\left(\slfrak_2\right)$, one sees that all the brackets $[M_n(A),M_n(B)]$ for given $A,B \in \{H,\Re \beta,\Im \beta\}$ are of order $\hbar$ and thus converge to zero in the semi-classical limit $\hbar\to0$. This is the crucial argument in Biane's Theorem, giving that the quantum walk $\left( M_{t/\hbar^2}(H), M_{t/\hbar^2}(\Re \beta), M_{t/\hbar^2}(\Im \beta) \ ; \ t \geq 0 \right)$ converges, in the semi-classical limit, towards a classical Euclidean $3$-dimensional Brownian motion $(X,Y,Z)$. Notice that compared to us, the Planck constant is not explicit in Biane's argument, as it is hidden in the diffusive rescaling.

In any case, recalling that $M_n(\beta)=M_n(\Re \beta)+iM_n(\Im \beta)$, we get the convergence in distribution
\begin{align}
\label{eq:walks_cv}
\left( M_{t/\hbar^2}( H ), M_{t/\hbar^2}(\beta) \ ; \ t \geq 0 \right)
& 
\overset{\hbar \rightarrow 0}{\longrightarrow}
\left( X_t, \beta_t^\C \ ; \  t \geq 0\right) \ ,
\end{align}
where $\beta^\C_t=Y_t+iZ_t$ is a standard complex Brownian motion and $X_t$ a standard real Brownian motion. We made such a choice of notation so that one can recognize the driving processes associated to the Bougerol-Jeulin dynamic:
       $$ g_t^\kbar = 
          \begin{pmatrix}
          e^{\half \kbar X_t} & 0 \\
          r e^{\half \kbar X_t} \int_0^t e^{- \kbar X_s} d\beta_s^\C & e^{-\half \kbar X_t}
          \end{pmatrix}
          \in \left(SU_2^*\right)_\kbar \approx NA
          \ .
       $$
Note that in view of the convergence of $\left( M_{t/\hbar^2}(H) \ ; \ t\geq0 \right)$, one has the convergence in distribution of the commutative process $\left( K^{r,\hbar}_t := M_{t/\hbar^2}(K) \ ; \ t\geq0 \right)$ to $\left( e^{-rX_t} \ ; \ t\geq0 \right)$. 

In order to justify the convergence of the full non-commutative process $\left( g^{r,\hbar}_t \ ; \ t \geq 0 \right)$ to $\left( g^r_t \ ; \ t \geq 0 \right)$, it remains to prove that, jointly with  $\left( K^{r,\hbar}_t \ ; \ t\geq0 \right)$,
$$ 
 \left( 2r F_t^{r,\hbar} :=  2r M_{t/\hbar^2}( F ) \ ; \ t \geq 0 \right)
 \overset{\hbar \rightarrow 0}{\longrightarrow}
 \left( r e^{\half \kbar X_t} \int_0^t e^{- \kbar X_s} d\beta_s^\C \ ; \  t \geq 0\right) \ .
$$
Note that contrary to $\left( K^{r,\hbar}_t \ ; \ t\geq0 \right)$, the process $\left( F_t^{r,\hbar} \ ; \ t\geq0 \right)$ is not classical, since $M_n(F)$ does not commute with $M_k(F)$ for $k<n$ and it is not self-adjoint either.  Nevertheless, using the expression of coproducts for $s \in \N$:
\begin{align}
\label{eq:coproductF}
   M_{s+1}(F)
 = & 
   (M_s \otimes 1)( \Delta F)
 = M_s(F) \otimes K^{\half} + M_s(K^{-\half}) \otimes F \ ,
\end{align}
one sees that for all $n \in \N$:
\begin{align*}
  & M_n(F) \\
= & M_n(K^{\half}) M_n(K^{-\half}) M_n(F)\\
= & M_n(K^{\half}) \sum_{s=0}^{n-1} \left( M_{s+1}(K^{-\half}) M_{s+1}(F) - M_{s}(K^{-\half})M_{s}(F) \right) \\
\overset{\eqref{eq:coproductF}}{=}
  & M_n(K^{\half}) \sum_{s=0}^{n-1} \left[ M_{s+1}(K^{-\half}) \cdot \left( M_{s}(F) \otimes K^{\half}  
 + M_{s}(K^{-\half}) \otimes F  \right) 
  - M_{s}(K^{-\half})M_{s}(F) \right] \\
= & M_n(K^{\half}) \sum_{s=0}^{n-1}  \left[      M_{s}(K^{-\half}) M_{s}(F)
                             + M_{s}(K^{-1}) \otimes K^{-\half} F
                             - M_{s}(K^{-\half}) M_{s}(F) \right ]\\
= & M_n(K^{\half}) \sum_{s=0}^{n-1} M_{s}(K^{-1}) \otimes K^{-\half} F \ .
\end{align*}
This expression has been tailored to look similar to the computation of the coefficient $F(x)$ if $x \in \left( SU_2^* \right)_r$ is a product of $n$ matrices. After all, according to the orbit method, an $n$-fold tensor product should correspond to the quantization of an $n$-fold convolution. A crucial point is that the above expression is a (non-commutative) martingale transform or a discrete stochastic integral. We are now ready to prove:
\begin{proposition}
The following convergence in distribution holds:
$$
 \left( \left( K^{r,\hbar}_t, \ 2rF_t^{r,\hbar} \right) \ ; \  t \geq 0 \right)
 \overset{\hbar \rightarrow 0}{\longrightarrow}
 \left( \left( e^{-rX_t}, \ r e^{\half \kbar X_t} \int_0^t e^{- \kbar X_s} d\beta_s^\C \right)
        \ ; \  t \geq 0 \right) \ .
$$
\end{proposition}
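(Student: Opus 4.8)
The plan is to recognise $\left( 2r F^{r,\hbar}_t \ ; \ t\geq0 \right) = \left( 2r\,M_{t/\hbar^2}(F) \ ; \ t\geq0 \right)$ as a discrete non-commutative stochastic integral and to pass to its diffusive limit by an Euler--Maruyama type argument, on top of the convergence \eqref{eq:walks_cv} already established. First I would dispose of the commutative half of the claim: $K^{r,\hbar}_t = M_{t/\hbar^2}(K)$ and $M_{t/\hbar^2}(K^{\pm\half})$ are exponentials of $M_{t/\hbar^2}(H)$, which, being a rescaled sum of bounded i.i.d.\ increments, is moment-determinate with moments controlled uniformly on compact time intervals; hence $K^{r,\hbar}_t$, $M_{t/\hbar^2}(K^{\pm\half})$ and $M_{t/\hbar^2}(\beta)$ converge in moments, jointly, to $e^{-rX_t}$, the corresponding exponentials of $X$, and $\beta^\C_t$. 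It then remains to handle $\left( 2r F^{r,\hbar}_t \ ; \ t\geq0 \right)$ jointly with these.

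The starting point is the identity derived just above from \eqref{eq:coproductF}, namely $M_n(F) = M_n(K^{\half})\,\Sigma_n$ with $\Sigma_n := \sum_{s=0}^{n-1} M_s(K^{-1})\otimes K^{-\half}F$. The key structural fact is that $\Sigma_n$ is a martingale transform for the filtration generated leg by leg: the prefactor $M_s(K^{-1})$ involves only the first $s$ tensor legs, whereas the increment $K^{-\half}F$ lives in leg $s+1$ and is centred under $\tau$, since $\rho_\hbar(K^{-\half}F)$ is strictly triangular hence traceless, with $\tau\bigl((K^{-\half}F)(K^{-\half}F)^\dagger\bigr) = \Oc(\hbar^2)$. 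Moreover, in a single leg $K^{-\half} = 1 + \Oc(\hbar)$ and $F = \Oc(\hbar)$ (recall $\beta_r^\hbar/\hbar\to1$ and $\beta = 2F$), so $2K^{-\half}F = \beta + \Oc(\hbar^2)$ and $\Sigma_n$ differs by lower-order terms from the genuine Euler scheme $\tilde\Sigma_n := \half\sum_{s=0}^{n-1} M_s(K^{-1})\otimes\Delta(M\beta)_{s+1}$, where $\Delta(M\beta)_{s+1}$ is the $\beta$-increment of leg $s+1$; this $\tilde\Sigma_n$ is the left-endpoint Riemann--Stieltjes sum for the integral of $s\mapsto M_s(K^{-1})$ against the $\beta$-walk.

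The convergence of $\Sigma_{t/\hbar^2}$ to the corresponding Itô integral, jointly with the driving processes and in the sense of moments, would then be split in two. First, comparing $\Sigma_n$ with $\tilde\Sigma_n$: their difference $\sum_{s=0}^{n-1} M_s(K^{-1})\otimes(K^{-\half}-1)F$ is again a martingale transform, with increments of size $\Oc(\hbar^2)$; since $\tau$ is a product state over the legs and $M_s(K^{-1})$ is previsible with respect to leg $s+1$, the Burkholder/Doob type bounds one needs collapse to elementary leg-by-leg computations, the typical one being $\tau\bigl((\Sigma_n-\tilde\Sigma_n)(\Sigma_n-\tilde\Sigma_n)^\dagger\bigr) = \sum_{s} \tau\bigl(M_s(K^{-2})\bigr)\,\tau\bigl(((K^{-\half}-1)F)((K^{-\half}-1)F)^\dagger\bigr) = \Oc(\hbar^{-2})\,\Oc(\hbar^{4}) = \Oc(\hbar^{2})$ (the cross terms vanishing by centredness and previsibility), together with their higher-order analogues, which show that $\tau$ of any monomial in $\Sigma_n-\tilde\Sigma_n$ and the remaining processes tends to zero. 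Second, for $\tilde\Sigma_n$ itself I would invoke the functional limit theorem for martingale transforms of a random walk: the integrand $M_s(K^{-1})$ converges uniformly on compacts (Donsker and the continuous mapping theorem), the increments $\Delta(M\beta)_{s+1}$ are i.i.d.\ centred with the correct quadratic-variation scaling, and the integrand sees only strictly-earlier-leg information, so the limit is the Itô integral, with no bracket correction. In the moment framework this is performed by expanding $\tau$ of a word in the processes and their adjoints, evaluated at finitely many times, as a sum over pairings of the tensor legs: the product form of $\tau$ makes unpaired configurations telescope to zero, and each surviving pairing converges term-by-term to the corresponding term of the Wiener chaos expansion of the limit. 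Along the way one notes that every commutator occurring within a common leg is $\Oc(\hbar)$ times a bounded process --- for instance $[M_m(H),\Sigma_n] = -2\hbar\,\Sigma_{\min(m,n)}$ --- so these contributions disappear and the limiting object is a classical process with $X$ independent of $\beta^\C$. Assembling $2r F^{r,\hbar}_t = 2r\,M_{t/\hbar^2}(K^{\half})\,\Sigma_{t/\hbar^2}$ then reproduces exactly, jointly with $K^{r,\hbar}_t\to e^{-rX_t}$, the Bougerol--Jeulin matrix $g^r$ of \eqref{eq:BJ_process}, which is the claim.

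The step I expect to be the main obstacle is the second one: in the non-commutative moment expansion one must control the accumulation of the many $\Oc(\hbar)$ correction and commutator terms and, above all, check that they do not conspire into a spurious Itô drift in the limit. They do not --- precisely because $K^{-\half}F$ is \emph{exactly} $\tau$-centred and $M_s(K^{-1})$ depends only on strictly earlier legs, so $\Sigma_n$ is a genuinely forward (Itô) discrete integral with no bracket correction, matching the Stratonovich-free Itô form of $g^r$ in \eqref{eq:BJ_process}. A lighter but necessary point is the uniform-in-$\hbar$ moment control that licenses the term-by-term passage to the limit, which again comes from the martingale-transform structure and the product form of $\tau$.
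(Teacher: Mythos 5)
Your decomposition shares the paper's starting identity $M_n(F) = M_n(K^{\half})\Sigma_n$, but the route to the limit is genuinely different, and the hardest step is only sketched. The paper introduces a time subdivision $D$ of $[0,t]$ and defines $F^{\hbar,D}_t$ by \emph{freezing the adapted prefactor} $M_s(K^{-1})$ at the left endpoint of each $D$-interval. This has two payoffs at once: for fixed $D$, the discretized object $F^{\hbar,D}_t$ becomes a polynomial in the three driving walks, so its joint limit as $\hbar\to0$ follows directly from the already-established convergence \eqref{eq:walks_cv} and the (polynomial) continuous mapping theorem; and the error $\bigl\Vert F^{r,\hbar}_t - F^{\hbar,D}_t\bigr\Vert_{2p}^4 \ll |D|$, uniformly in $\hbar$ and $t\in[0,T]$, is controlled by the non-commutative Burkholder--Davis--Gundy inequality of Pisier--Xu combined with the classical BDG bound on the increments of $M_s(K^{-1})$. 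The three limits ($\hbar\to0$ at fixed $D$, $|D|\to0$ uniformly in $\hbar$, and the classical Riemann-sum-to-It\^o step) then assemble cleanly.

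Your substep 1 --- comparing $\Sigma_n$ with $\tilde\Sigma_n$ by replacing the increment $K^{-\half}F$ by $F$ --- addresses a genuinely smaller correction that the paper does not even isolate, and it is not where the difficulty lies. The difficulty lies in the adapted, $s$-dependent prefactor $M_s(K^{-1})$, which is correlated with the increments of the very walk whose exponential it is. Your substep 2 defers this entirely to "the functional limit theorem for martingale transforms of a random walk", justified by "expanding $\tau$ as a sum over pairings of the tensor legs" converging to "the Wiener chaos expansion of the limit". In the non-commutative moment framework there is no off-the-shelf Kurtz--Protter type theorem to invoke, and the pairing combinatorics when the integrand is itself a product over earlier tensor legs (rather than a deterministic function) is precisely the problem to be solved; the subdivision $D$ is the paper's device for removing the $s$-dependence at a cost that can be bounded uniformly in $\hbar$, after which the continuous mapping theorem takes over. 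Your $[M_m(H),\Sigma_n]=-2\hbar\,\Sigma_{\min(m,n)}$ computation and the observation that $K^{-\half}F$ is $\tau$-centered are both correct and useful, but they do not replace the coarse-graining (or some equally quantitative control of the adapted integrand) needed to make the moment-by-moment limit a proof rather than a sketch.
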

\begin{proof}
Let $D$ be a subdivision of the segment $[0,t]$. The maximal meshsize is denoted $|D|$. Associated to $D$ and $t$, one defines the sums:
\begin{align*}
     F_t^{\hbar, D}
:= &  M_{t/\hbar^2}(K^{\half}) \sum_{[u,v] \in D} \sum_{s=\lfloor u/\hbar^2\rfloor+1}^{\lfloor v/\hbar^2\rfloor}  M_{u/\hbar^2}(K^{-1}) \otimes 1^{\otimes s-1-\lfloor u/\hbar^2\rfloor} \otimes K^{-\half} F\\
 = & M_{t/\hbar^2}(K^\half) \sum_{[u,v] \in D} M_{u/\hbar^2}(K^{-1}) \sum_{s=\lfloor u/\hbar^2\rfloor+1}^{\lfloor v/\hbar^2\rfloor}  1^{\otimes (s-1)} \otimes K^{-\half} F \ .
\end{align*}
Again,  the above sum can be understood as the definition of the non-commutative stochastic integral of the  adapted process $M_t(K^{-1})$ with respect to the process defined by the independent increments $1^{\otimes (s-1)} \otimes K^{-\half} F$. 

{\bf Step 1:} Using the weak convergence of Eq. \eqref{eq:walks_cv}, one has the weak convergence jointly with the other walks $( M_{\cdot}( H ), M_{\cdot}(\beta) )$:
$$ 
 \left( 2 F_t^{\hbar, D} ; t \geq 0 \right) 
 \overset{\hbar \rightarrow 0}{\longrightarrow}
 \left(
 e^{\half \kbar X_t} \sum_{[u,v] \in D}
 e^{- \kbar X_u} \left( \beta_v^\C - \beta_u^\C \right) ;  t \geq 0\right) \ .
$$

Indeed even when considering different times, $F_t^{\hbar, D}$ is  a polynomial function of known random walks. Since $M_{t/\hbar^2}(K)$ converges to $e^{-rX_t}$ and $K^{-\half}\Re F$ (resp. $K^{-\half}\Im F$) has the same spectral measure in $V^q(\hbar)$ than $\Re F$ (resp. $\Im F$), 
one has
$$
 \sum_{s=\lfloor u/\hbar^2\rfloor+1}^{\lfloor v/\hbar^2\rfloor} 1^{\otimes (s-1)} \otimes 2 K^{-\half} F \to \beta_v^\C - \beta_u^\C 
$$
as $\hbar\to0$, so 
we get the above convergence of $F_t^{\hbar, D}$  by 
simply applying a polynomial version of  the mapping theorem in the non-commutative setting.

{\bf Step 2:} Let us prove that for every fixed $\hbar_0$, $T>0$ and $p>1$
$$ \limsup_{|D| \rightarrow \infty} \sup_{\hbar \in [0, \hbar_0]} \sup_{t \in [0,T]} \Big\Vert F_t^{r,\hbar} - F_t^{\hbar, D} \Big\Vert_{p} = 0 \ ,$$
where $\Vert\cdot\Vert_p$ denotes the norm in $L^p(\Ac^{r, \hbar},\tau)$. 
We start by noticing that $M_{t/\hbar^2}(K)$ is uniformly bounded in all $L^p$ as (renormalized) Bernoulli walks have exponential moments of all order. Then using the Cauchy-Schwarz inequality:
\begin{align*}
     & \Big\Vert F_t^{r,\hbar} - F_t^{\hbar, D} \Big\Vert_{p} \\
\leq & \Big\Vert M_{t/\hbar^2}(K^\half) \Big\Vert_{2p}
      \Big\Vert  \sum_{[u,v] \in D} \sum_{s=\lfloor u/\hbar^2\rfloor+1}^{\lfloor v/\hbar^2\rfloor}  \left( M_{s-1}(K^{-1}) - M_{u/\hbar^2}(K^{-1}) \otimes 1^{\otimes (s-1-\lfloor u/\hbar^2\rfloor)} \right) \otimes K^{-\half} F \Big\Vert_{2p} \\
\ll  & \Big\Vert  \sum_{[u,v] \in D}  \sum_{s=\lfloor u/\hbar^2\rfloor+1}^{\lfloor v/\hbar^2\rfloor} \left( M_{s-1}(K^{-1}) - M_{u/\hbar^2}(K^{-1}) \otimes 1^{\otimes (s-1-\lfloor u/\hbar^2\rfloor)} \right) \otimes K^{-\half} F \Big\Vert_{2p} \ .
\end{align*}
The above sum is thus a martingale transform of the adapted process $\xi_s =   M_{s-1}(K^{-1}) - M_{u/\hbar^2}(K^{-1}) \otimes 1^{\otimes (s-1-\lfloor u/\hbar^2\rfloor)} $ with respect to the martingale difference process given by the increments $\left( 1^{\otimes s-1}\otimes K^{-1} F \ ; \ s \in \N \right)$. Note that the increment $1^{\otimes s-1}\otimes K^{-1} F$ is independent from $\xi_s$ since $\xi_s$ is identity on the $s$-th leg of the tensor product.

Now, thanks to the non-commutative Burkholder-Davis-Gundy inequality \cite[Theorem 2.1]{PX97} established by Pisier-Xu, its $L^{2p}$ norm  is controlled by the $L^p$ norms of non-commutative brackets:
\begin{align*}
&\Big\Vert F_t^{r,\hbar} - F_t^{\hbar, D} \Big\Vert_{2p}^2 \\
& \ll \max \left\{ 
\Big\Vert\sum_{[u,v] \in D}  \sum_{s=\lfloor u/\hbar^2\rfloor+1}^{\lfloor v/\hbar^2\rfloor} \left( M_{s-1}(K^{-1}) - M_{u/\hbar^2}(K^{-1}) \otimes 1^{\otimes (s-1-\lfloor u/\hbar^2\rfloor)} \right)^2 \otimes E K^{-1} F \Big\Vert_p^{\half} \right. \ , \\
&\left. 
\Big\Vert \sum_{[u,v] \in D}  \sum_{s=\lfloor u/\hbar^2\rfloor+1}^{\lfloor v/\hbar^2\rfloor} \left( M_{s-1}(K^{-1}) - M_{u/\hbar^2}(K^{-1}) \otimes 1^{\otimes (s-1-\lfloor u/\hbar^2\rfloor)} \right)^2 \otimes K^{-\half} FEK^{-\half} 
\Big\Vert_p^{\half} \right\}
\end{align*}
where we have used $\Vert A^{\half}\Vert_{2p}= \Vert  A\Vert_p^{\half}$. Using H\"older's inequality and the fact that $\Vert EK^{-1}F\Vert_p$ and $\Vert K^{-\half} FEK^{-\half}\Vert_p$ are of order $\hbar^2$, we get
\begin{align*}
\Big\Vert F_t^{r,\hbar} - F_t^{\hbar, D} \Big\Vert_{2p}^4 
 \ll  \sum_{[u,v] \in D}  \sum_{s=\lfloor u/\hbar^2\rfloor+1}^{\lfloor v/\hbar^2\rfloor}  \hbar^2
\Big\Vert \left( M_{s-1}(K^{-1}) - M_{u/\hbar^2}(K^{-1})\right) ^2 \Big\Vert_{2p} \ .
\end{align*}

Now, since $\left( M_s(K^{-1}) \ ; \ s \in \N \right)$ is a commutative process, one can use the classical Burkholder-Davis-Gundy inequality to obtain  the estimate
$$ \Vert ( M_{s-1}(K^{-1}) - M_{u/\hbar^2}(K^{-1})) ^2 \Vert_{2p}
   \ll (2r\hbar)^2 (s-1-\lfloor u/\hbar^2\rfloor) \ .$$
As such, we get
\begin{align*}
\Big\Vert F_t^{r,\hbar} - F_t^{\hbar, D} \Big\Vert_{2p}^4 
&\ll \sum_{[u,v] \in D}  \sum_{s=\lfloor u/\hbar^2\rfloor+1}^{\lfloor v/\hbar^2\rfloor}  \hbar^4  \left( s-1 -  \lfloor u/\hbar^2 \rfloor \right) \\
&=   \sum_{[u,v] \in D} \hbar^4 \sum_{s=1}^{\lfloor v/\hbar^2\rfloor-\lfloor u/\hbar^2\rfloor} s \\
&\ll \sum_{[u,v] \in D} \hbar^4 \left( \lfloor v/\hbar^2\rfloor-\lfloor u/\hbar^2\rfloor \right)^2 \\
&\ll |D| \,
\end{align*}
uniformly in $t\in[0,T]$ and $\hbar$.


{\bf Step 3:} By the same computation, in the easier classical setting, any fixed moments of the family
$\left(
 e^{\half \kbar X_t} \sum_{[u,v] \in D}
 e^{- \kbar X_u} \left( \beta_v^\C - \beta_u^\C \right) ;  t \geq 0\right)$
are close to the corresponding moments of the family
$\left(
 e^{\half \kbar X_t} \int_0^t e^{- \kbar X_s} d\beta_s^\C ;  t \geq 0\right)$,
the error depending only on $|D|$, this error vanishing as $|D| \rightarrow 0$. Also one can invoke the convergence in probability and boundedness in every $L^p$.

{\bf Step 4:} To conclude, any non-commutative moments of $\left(K_t^{r,\hbar},2rF_t^{r,\hbar} ; t\geq0\right)$ are 
\begin{itemize}
 \item close to the moments of $\left(K_t^{r,\hbar} , 2rF_t^{\hbar, D} ; t\geq0\right)$ via Step 2.
 \item which are close to the moments of $\left(e^{-r X_t}, re^{\half \kbar X_t} \sum_{[u,v] \in D}
 e^{- \kbar X_u} \left( \beta_v^\C - \beta_u^\C \right) ; t\geq0 \right) $ via Step 1.
 \item which are close to the moments of $\left(e^{-r X_t}, re^{\half \kbar X_t} \int_0^t e^{- \kbar X_s} d\beta_s^\C ; t\geq0\right)$ via Step 3. \qedhere
\end{itemize}
\end{proof}

Now let us deal with the semi-classical limit of the dynamic induced by the quantum Casimir. From Eq. \eqref{eq:def_Lambda}, we have:
\begin{align*}
\frac{2r \hbar}{e^{r\hbar}-e^{-r\hbar}}
\cosh \left( r \hbar + r \Lambda_t^{r, \hbar} \right)
& =
\half \left(  |2rF^{r,\hbar}_t|^2 + \left( e^{r\hbar}K^{r,\hbar}_t + e^{-r\hbar} (K^{r,\hbar}_t)^{-1} \right) \frac{2r\hbar}{(e^{r\hbar} - e^{-r\hbar})} \right) \ .
\end{align*}
As $\hbar\to0$, exactly as in Remark \ref{rmk:CasimirIsDressingInvariant}, we obtain the dressing orbit invariant in the limit
\begin{align*}
  & \lim_{h \rightarrow 0} \frac{2r \hbar}{e^{r\hbar}-e^{-r\hbar}} \cosh \left( r \hbar + r \Lambda_t^{r, \hbar} \right) \\
= & \half \left| r  e^{\half r X_t} \int_0^t e^{- \kbar X_s} d\beta_s^\C \right|^2 + \half \left(e^{-rX_t}+e^{rX_t}\right) \\
= & \half \Tr g^r_t {g^r_t}^\dagger \ .
\end{align*}
By inverting the $\cosh$, we obtain as announced:
$$
   \lim_{\hbar \rightarrow 0} \Lambda_t^{r, \hbar}
   = \frac{1}{r} \Argcosh\left[ \half \left| r  e^{\half r X_t} \int_0^t e^{- \kbar X_s} d\beta_s^\C \right|^2 + \cosh(rX_t) \right]
   = \Lambda_t^r  \ .
$$

\subsection{The law of \texorpdfstring{$\Lambda^r$}{the radial process} is independent from \texorpdfstring{the curvature $r>0$}{curvature} }
\label{subsection:rIndependence}

Now that we have developed all the machinery, the simplest proof of the independence of $\Lc\left( \Lambda^r \right)$ consists in putting together the two following facts:
\begin{itemize}
\item via subsection \ref{subsection:qPitman}, $\Lambda^r$ is the semi-classical limit of the Littlewood-Richardson dynamic.
\item as  described in subsection \ref{subsection:qLR}, the Littlewood-Richardson dynamic is the one measured by the Casimir. It depends on the relative dimensions of representations, upon tensoring by $V^q(\hbar)$ and breaking into irreducibles. As the tensor product rule \eqref{eq:LRrule} is independent of $q = e^{-r}$, so is the Littlewood-Richardson dynamic.
\end{itemize}

As long as we are concerned with the required proofs in the paper, we are done. We finish by revisiting the subtle argument of Bougerol and Jeulin \cite[Section 3]{BJ02} through the lens of spherical harmonic analysis and by explaining where the curvature parameter is hidden. The key ingredient for this reinterpretation is the ($r$-deformed) Harish-Chandra spherical function $\phi^{(r)}_z(\Lambda)$, which is defined as follows. Given an element $g^{r, \Lambda}$ uniformly distributed on the dressing orbit $\Oc_r(\Lambda)$, define:
$$ \phi^{(r)}_z(\Lambda)
   := \E\left( e^{r(z-1)H(g^{r, \Lambda})} \right)
   = \frac{e^{rz\Lambda}-e^{-rz\Lambda}}
          {z(e^{r\Lambda} -e^{-r\Lambda})}\ ,
$$
where the last equality is computed via Proposition \ref{proposition:uniformDressing}. Notice the trivial dependence in $r$ via a rescaling:
\begin{align}
\label{eq:trivialDep}
\phi^{(r)}_z(\Lambda) & = \phi^{(1)}_z(r \Lambda) \ .
\end{align}
From Theorem \ref{thm:staticLimits}, this is the semi-classical manifestation of (renormalized) characters 
$$ \frac{\Tr_{V^q(\Lambda^\hbar)}(q^{-z H})}
        {\Tr_{V^q(\Lambda^\hbar)}(q^{-H})}
$$
depending trivially on $q=e^{-r}$, itself a side-product of the rigidity of quantum groups.

\medskip

Now, let us sketch the argument before discussing it. They start by considering the canonical (bi-invariant) Brownian motion $B = \left( B_t \ ; \ t \geq 0 \right)$ on $\H^3 = SL_2(\C) / SU_2 \approx \left( SU_2^* \right)_{r=1}$. Its radial part has generator
$$ \half \partial_\Lambda^2 + \partial_\Lambda \log \sinh(\Lambda) \ \partial_\Lambda \ ,$$
whose heat kernel is diagonalized using the spherical functions $\left( \phi^{(r=1)}_z(\Lambda) \ ; \ z \in i \R \right)$.

Bougerol and Jeulin perform a first change of measure using $\phi_0^{(r=1)}(\Lambda) = \frac{\Lambda}{\sinh(\Lambda)}$ as a density. Under the new measure, $B$ becomes the process $B^0$. By Girsanov, the generator of its radial part transforms as:
$$ \half \partial_\Lambda^2 + \partial_\Lambda \log \sinh(\Lambda) \ \partial_\Lambda + \partial_\Lambda \log \phi_0^{(r=1)}(\Lambda) \ \partial_\Lambda
 = \half \partial_\Lambda^2 + \partial_\Lambda \log \Lambda \ \partial_\Lambda
 \ .$$
This is where we recognize the Bessel 3 process, which incidentally is the law of the radial part of the flat Brownian motion $x^0$ (denoted $Z$ in their paper).

Another change of measure ends up having a similar effect. Consider the map $\tau := e^{-H}$ on $NA \approx \H^3$. This time, the change of measure uses $\tau$ as density and aims at creating a special drift in the diagonal part of the symmetric space. Under the new measure, $B$ becomes the process $B^\tau$ and it is directly related to our curved Brownian motion via $B^\tau \eqlaw g^{r=1}$.

\cite[Theorem 3.1]{BJ02} invokes the invariance of the Brownian motion  $B$ in order to average $\tau$ over $SU_2$ and obtain $\phi_0$. This entails that the radial parts of $B^\tau$ and $B^0$ have the same distribution. All in all, this proves that $\Lambda^{r=1}$, the radial part of $g^{r=1}$, is a Bessel 3 process. Brownian scaling extends the result to all $r>0$ in \cite[Proposition 3.2]{BJ02}. Such a scaling argument however does not make apparent why $r>0$ is a curvature parameter.

\medskip

Let us now revisit the argument by introducing the parameter $r>0$ since the beginning. Here, the hyperbolic space $\H^3$ with curvature parameter $r$ is identified with $\left( SU_2^* \right)_r$. 

As detailed in subsection \ref{subsection:rIsCurvature}, introducing the parameter $r$ amounts to multiplying the sectional curvature by $r^2$. In turn, this is equivalent to rescaling the metric tensor by $r^{-2}$ via \cite[$\mathsection 3.20$, (iii)]{GHL90}. Indeed, recall that thanks to the Killing-Hopf theorem \cite[$\mathsection 3.28$]{GHL90}, in the context of constant sectional curvature, there is a one to one correspondence between curvature and the Riemannian metric tensor. Here the metric on $\H^3 = \left( SU_2^* \right)_{r=1}$ corresponds in spherical coordinates to \cite[p.152]{Helgason84}:
$$ ds^2 = d\Lambda^2 + \sinh(\Lambda)^2 d\sigma^2 \ ,$$
where $d\sigma^2$ is the Riemannian structure on the unit sphere, in the tangent space at the identity. By rescaling the curvature and then changing $\Lambda$ to $r\Lambda$, we obtain on $\H^3 = \left( SU_2^* \right)_r$:
$$ ds^2 = d\Lambda^2 + \left( \frac{\sinh(r\Lambda)}{r} \right)^2 d\sigma^2 \ .$$
Computing the Laplace-Beltrami operator (see \cite[Chapter II, $\mathsection 2.4$, Eq. (13)]{Helgason84}), we obtain in spherical coordinates:
$$ \half \partial_\Lambda^2
 + \partial_\Lambda \log \left( \frac{\sinh(r \Lambda)}{r} \right) \ \partial_\Lambda \ .
$$

From this point on, the argument of Bougerol and Jeulin carries verbatim and we obtain that the generator of $\Lambda^r$, the radial part of $g^{r}$, is:
$$ \half \partial_\Lambda^2
 + \partial_\Lambda \log \left( \frac{\sinh(r\Lambda)}{r} \right) \ \partial_\Lambda
 + \partial_\Lambda \log \phi_0^{(r)}(\Lambda) \ \partial_\Lambda
 = \half \partial_\Lambda^2 + \partial_\Lambda \log \Lambda \ \partial_\Lambda
 \ ,$$
which is the generator of a Bessel-3 process, independently of $r>0$. The Brownian rescaling argument is not needed anymore and we have indeed used curvature.

\medskip

In the end, we realize the key role of the simple dependence in $r>0$ of the spherical harmonic analysis in Eq. \eqref{eq:trivialDep}, which is nothing but the rigidity of quantum groups.
 
\appendix

\section{Non-commutative topological considerations}
\label{appendix:topologicalConsiderations}

Throughout the paper, we avoided the matters of completions of algebras. Since this is definitely not the main focus of the paper, we chose to postpone these topological considerations to this appendix.

Already, let us explain why, in most of our proofs, completing into a Von Neuman algebra would have been an overkill. In the entire paper, one can perform functional calculus at the level of the matrix algebras obtained after representation. For example, the first instance where we invoked elements belonging to a completion was Theorem \ref{thm:Biane} where the Casimir element $C_\gfrak$ is defined as a square-root. The simplest way of defining the object is the following. One has to remember that, in the computation of non-commutative moments, $C_\gfrak^2$ is represented as a Hermitian matrix before taking the trace. At that level, functional calculus is available for Hermitian matrices and the square-root is perfectly well-defined. 

Nevertheless, since the machinery exists (see \cite{PX97}, \cite{PX03} and references therein), it is possible to have a more intrinsic point of view and complete any of the algebras $\Ac$ considered in the paper into a $C^*$-algebra or a Von Neumann algebra. The general technique relies on the state $\tau$. In order to form a Banach algebra, we define the norm $d$ given by:
$$ \forall X \in \End(V), \ d(X) = \lim_{p \rightarrow \infty} \tau\left( (X X^*)^p \right)^{\frac1p} \ ,$$
then we complete $\Ac$ thanks to $d$. In order to form a Von Neumann algebra, there is the Gelfand-Naimark-Segal construction (GNS for short). This is done in several steps which we detail for the quantum group $\Uc_q^\hbar(\slfrak_2) = \Ac$. The other algebras considered in the paper are tensor products or degenerations.

\medskip

{\bf Step 1: Linear structure.} Consider a representation 
$$\rho: \Ac = \Uc_q^\hbar(\slfrak_2) \longrightarrow \bigoplus_{n=0}^\infty \End\left( V^q(n\hbar) \right) \ ,$$
which can be taken to be the faithful Peter-Weyl isomorphism. For shorter notations, let $\rho_{n\hbar} = \rho_{| \End(V^q(n\hbar))}$ be the restriction to the $n$-th component. Then define a trace via
$$ \tau(a) = \sum_{n=0}^\infty d_n \frac{\Tr( \rho_{n\hbar}(a) )}{\dim V^q(n\hbar)} \ ,$$
where 
$$ \sum_{n=0}^\infty d_n = 1 \ .$$
From this normalized trace $\tau$, one forms the scalar product $\langle a, b \rangle := \tau( a^{\dagger} b )$ and considers the completion into a Hilbert space $H$. Thus we embed the algebra into a linear space, but the multiplicative structure is missing.

\medskip

{\bf Step 2: Multiplicative structure.} The algebra $\Ac$ acts on $H$ via multiplication. Moreover, seeing $A \in \Ac$ as an operator on $H$, we write for $b \in H$:
$$ A(b) := A \times b \ ,$$
and from the Cauchy-Schwarz inequality:
$$ \| A (b) \|_{H}
 = \sqrt{ \tau\left[ \rho(A^\dagger A) b^\dagger b \right] }
 \leq \sqrt{ \tau( \rho(A^\dagger A) ) } \| b \|_{H} \ .
$$
Hence the $A$ acts necessarily as a bounded operator. As such the algebra $\Ac$ is identified to a subalgebra of $B(H)$, the algebra of bounded operators on $H$. 

\medskip

{\bf Step 3: Completions. } Upon completion for the operator norm, one obtains a $C^*$ algebra. Upon completion with respect to the weak-* topology, one obtains a Von Neumann  algebra. Going even further, one obtains the unbounded operators on $H$ affiliated to the algebra $\Ac$.

\bigskip

\noindent {\bf Acknowledgments.} During a conference in Reims, R.C. had the opportunity to meet Kirillov and present some of the ideas behind this paper, while still in their infancy. Kirillov said with a smile that these ideas were not completely absurd and that now, ``you have to work". Needless to say, one could not hope for better words of encouragement.


F.C. and R.C. acknowledge the support of the grant PEPS JC 2017 ``Quantum walks on quantum groups" and the grant ANR-18-CE40-0006 MESA funded by the French National Research Agency (ANR).

Finally, the authors express their gratitude to the anonymous referees for their useful comments.

\bibliographystyle{halpha}
\bibliography{qPitman}

\end{document}